\newtheorem{theorem}{Theorem}[section]
\newtheorem{lemma}[theorem]{Lemma}
\newtheorem{claim}[theorem]{Claim}
\newtheorem{proposition}[theorem]{Proposition}
\theoremstyle{definition}
\newtheorem{remark}[theorem]{Remark}
\newcommand{\eps}{\varepsilon}
\newcommand{\Rb}{\mathbb{R}}
\newcommand{\RR}{\mathbb{R}}
\newcommand{\ZZ}{\mathbb{Z}}
\newcommand{\be}{\begin{equation}}
\newcommand{\ee}{\end{equation}}
\newcommand{\pa}{\partial}
\newcommand{\Ce}{\mathcal C^e}
\newcommand{\tu}{\tilde{u}}
\newcommand{\tf}{\tilde{f}}
\newcommand{\tGamma}{\widetilde{\Gamma}}
\newcommand{\lambdabf}{\boldsymbol{\lambda}}
\newcommand{\HHH}{\mathcal{H}}
\newcommand{\indic}{1\!\!1}
\numberwithin{equation}{section}
\title[Channels of energy for critical wave in the degenerate case] 
{On channels of energy for the radial linearised energy critical wave equation in the degenerate case}
\author[C.~Collot]{Charles Collot}
\address{CNRS and AGM (UMR 8088) laboratory of CY Cergy Paris Universit\'e, 2 rue Adolphe Chauvin, 95300 Pontoise, France}
\email{ccollot@cyu.fr}
\author[T.~Duyckaerts]{Thomas Duyckaerts}
\address{LAGA (UMR 7539), Universit\'e Sorbonne Paris Nord, Institut Galil\'ee, 99 avenue Jean-Baptiste Cl\'ement, 93430 Villetaneuse, France}
\email{duyckaer@math.univ-paris13.fr}
\author[C.~Kenig]{Carlos Kenig}
\address{University of Chicago, Department of Mathematics, 5734 University Avenue, Chicago, IL 60637-1514, USA}
\email{ckenig@uchicago.edu}
\author[F.~Merle]{Frank Merle}
\address{Institut des Hautes \'Etudes Scientifiques, and AGM (UMR 8088) laboratory of CY Cergy Paris Universit\'e 2 rue Adolphe Chauvin, 95300 Pontoise, France}
\email{frank.merle@cyu.fr}
\thanks{ 
\today}
\begin{document}

\begin{abstract}
Channels of energy estimates control the energy of an initial data from that which it radiates outside a light cone. For the linearised energy critical wave equation they have been obtained in the radial case in odd dimensions, first in $3$ dimensions in \cite{DuKeMe13}, then for the general case in \cite{DuKeMe20}. We consider even dimensions, for which such estimates are known to fail \cite{CoKeSc14}. We propose a weaker version of these estimates, around a single ground state as well as around a multisoliton. This allows us in \cite{CoDuKeMe22Pv1} to prove the soliton resolution conjecture in six dimensions.
\end{abstract}

\maketitle

\section{Introduction and results}

We consider the linearised energy critical wave equation around the ground state in $N\geq 3$ dimensions:
\begin{equation}
 \label{LW}
  \left\{
 \begin{aligned}
& \partial_t^2u_L-\Delta u_L+V u_L=0\\
& \vec{u}_{L\restriction t=0}=(u_0,u_1),
\end{aligned} \right.
\end{equation}
where $(u_0,u_1)\in \mathcal H=\dot H^1 \times L^2(\mathbb R^N)$ and
$$
V=-\frac{N+2}{N-2}W^{\frac{4}{N-2}}, \qquad W(x)= \left(1+\frac{|x|^2}{N(N-2)}\right)^{-\frac{N-2}{2}}.
$$
This is the linearised equation for 
\begin{equation}
 \label{NLWabs}
 \partial_t^2u-\Delta u=|u|^{\frac{4}{N-2}}u 
\end{equation} 
around the stationary solutions $\pm W$. By standard arguments, recalled in Section \ref{subsec:channelsunsoliton}, Equation \eqref{LW} is globally well-posed in $\HHH$, and moreover the outer radiated energy
\begin{equation} \label{id:exteriorenergy}
E_{\textup{out}}=E_{\textup{out}}^-+E_{\textup{out}}^+, \qquad E_{\textup{out}}^\pm=\lim_{t\to \pm \infty}  \int_{|x|> |t|} |\nabla_{t,x}u_L(t,x)|^2dx,
\end{equation}
where $\nabla_{t,x}f =(\pa_t f,\nabla f)$, is well-defined for both time directions $t\to \pm \infty$. For the free wave equation
\begin{equation}
 \label{FW}
   \left\{
 \begin{aligned}
& \partial_t^2u_F-\Delta u_F=0,\\
& \vec{u}_{F\restriction t=0}=(u_0,u_1),
\end{aligned} \right.
\end{equation} 
it controls the total energy of the initial data in odd dimensions \cite{DuKeMe11a,DuKeMe12}:
\begin{equation} \label{bd:channelsodddimensions}
\| u_0\|_{\dot H^1}+\| u_1\|_{L^2}\lesssim \sqrt{E_{\textup{out}}} \qquad \mbox{if }N\geq 3 \mbox{ is odd}
\end{equation} 
but only for half the data in even dimensions \cite{CoKeSc14}:
\begin{equation} \label{free_exterior}
\| u_0\|_{\dot H^1}\lesssim \sqrt{E_{\textup{out}}} \quad \mbox{if }N\equiv 4 \mod 4 \qquad \mbox{and} \qquad \| u_1\|_{L^2}\lesssim \sqrt{E_{\textup{out}}} \quad \mbox{if }N\equiv 6 \mod 4
\end{equation} 
while the full estimate \eqref{bd:channelsodddimensions} is known to fail \cite{CoKeSc14}. Extensions to domains of the form $\{|x|>R+|t|\}$ for $R>0$, obtained in \cite{KeLaLiSc15,DuKeMaMe21P,LiShenWei21P}, will be used in the present paper. Other recent results on the asymptotic behaviour of linear waves can be found in \cite{Delort2021P,cote2021concentration,li2022asymptotic}.

For the linearised wave equation \eqref{LW}, two natural counter examples to estimates like \eqref{bd:channelsodddimensions} and \eqref{free_exterior} are $\Lambda W$ and $t\Lambda W$ (for $N\geq 5$), as they are non-radiative i.e. $E_{\textup{out}}=0$. Here $\Lambda W=x.\nabla W+\frac{N-2}{2}W$ is in the radial kernel of $-\Delta +V$. For odd dimensions, the strong estimates \eqref{bd:channelsodddimensions} and their extensions to domains $\{|x|>R+|t|\}$ for $R>0$ allowed the authors of \cite{DuKeMe20} to extend this estimate to Equation \eqref{LW} in the radial case:
\begin{equation} \label{bd:channelsodddimensionslinearised}
\| \Pi_{\dot H^1}^\perp u_0\|_{\dot H^1}+\| \Pi_{L^2}^\perp u_1\|_{L^2}\lesssim \sqrt{E_{\textup{out}}} \qquad \mbox{if }N\geq 3 \mbox{ is odd}.
\end{equation} 
Above, we used the projectors
\begin{equation} \label{def:PiH1PiL2}
\Pi_{\dot H^1}^\perp=\Pi_{\dot H^1} (\text{Span}(\Lambda W))^\perp, \qquad \Pi_{L^2}^\perp=\Pi_{L^2} (\text{Span}(\Lambda W))^\perp,
\end{equation}
where, for $H$ a Hilbert space, and $E$ a closed linear subspace of $H$, we denote by $\Pi_H(E)$ the orthogonal projection onto $E$ in $H$.

In this paper, we consider the even dimensional case. We focus on six and eight dimensions, since we believe their extensions to other even dimensions only to be a technical refinement of the present arguments. Our first result is a weaker version of \eqref{bd:channelsodddimensionslinearised} involving the space $Z_\alpha(\mathbb R^N)$ associated to the following norm for $\alpha \in \mathbb R$:
\begin{equation}
\label{defZ}
\| f\|_{Z_{\alpha}}=\sup_{R>0} \ \frac{R^{-\frac N2-\alpha}}{\langle \log R \rangle}  \left(\int_{R<|x|<2R} f^2dx \right)^{\frac 12}.
\end{equation} 
Note that, $L^2(\mathbb R^6)\subset Z_{-3}(\mathbb R^6)$ with $ \| f \|_{Z_{-3}(\mathbb R^6)} \lesssim \| f \|_{L^2(\mathbb R^6)}$, so that $Z_{-3}(\mathbb R^6)$ is a logarithmic weakening at $0$ and $\infty$ of $L^2(\mathbb R^6)$. 

\begin{theorem}[Channels of energy around the ground state] \label{th:channelsunsoliton}
Assume $N=6$. There exists $C>0$ such that any radial solution $u_L$ of \eqref{LW} satisfies:
\be \label{bd:channelunsoliton}
\big\| \Pi_{L^2}^\perp u_1\big\|_{L^2}+\big\|\nabla \Pi_{\dot H^1}^\perp u_0 \big\|_{Z_{-3}} \leq C \sqrt{E_{\textup{out}}}.
\ee
\end{theorem}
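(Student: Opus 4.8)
The plan is to reduce the linearised problem \eqref{LW} to the free wave equation \eqref{FW} via a distorted Fourier transform (or, more elementarily, an explicit change of unknown) and then to apply the known exterior energy estimates \eqref{free_exterior} together with their extensions to cones $\{|x|>R+|t|\}$. First I would recall, as in \cite{DuKeMe20}, that in the radial setting the operator $-\Delta + V$ acting on $\dot H^1_{\mathrm{rad}}(\mathbb R^6)$ is conjugate, away from its kernel, to a free radial wave operator: writing $u_L(t,r) = r^{-(N-1)/2} v_L(t,r)$ one gets a one-dimensional wave equation with a potential decaying like $r^{-2}$ plus the $\Lambda W$-obstruction, and one separates the part of the data orthogonal to $\mathrm{Span}(\Lambda W)$. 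On that orthogonal complement one has a good spectral theory and an asymptotic completeness statement comparing $u_L$ to a free radial solution $u_F$ at spatial infinity, so that $E_{\textup{out}}(u_L)$ and $E_{\textup{out}}(u_F)$ are comparable up to the contribution localised near the origin.

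The next step is to quantify the transfer of information from $E_{\textup{out}}$ to the data. Decompose $\vec u_0 = (u_0,u_1)$ with $\Pi^\perp$ applied, and run the exterior-cone estimates on annuli $\{R<|x|<2R+|t|\}$: the point of \eqref{free_exterior} is that in $N=6$ ($N\equiv 6 \bmod 4$) the free estimate controls $\|u_1\|_{L^2}$ fully but gives only a partial, scale-by-scale control of $\nabla u_0$. Summing the dyadic estimates on the velocity component yields $\|\Pi_{L^2}^\perp u_1\|_{L^2} \lesssim \sqrt{E_{\textup{out}}}$ directly. For the $\dot H^1$ component one only obtains, for each dyadic scale $R$, a bound of the form $R^{-N/2-\alpha}\big(\int_{R<|x|<2R}|\nabla u_0|^2\big)^{1/2} \lesssim \sqrt{E_{\textup{out}}}$ with $\alpha=-3$, possibly with a logarithmic loss coming from the failure of \eqref{bd:channelsodddimensions} in even dimensions and from the $r^{-2}$ potential tail; this is exactly the content of the $Z_{-3}$ norm in \eqref{defZ}, whose $\langle\log R\rangle$ denominator is designed to absorb that loss. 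So the structure of the argument is: (i) reduce to free waves on the orthogonal complement; (ii) apply \eqref{free_exterior} and the $R>0$-cone versions scale by scale; (iii) repackage the $u_0$-output as the $Z_{-3}$ bound and sum the $u_1$-output.

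The main obstacle I expect is step (ii)–(iii): controlling the constants and the logarithmic losses uniformly in the scale $R$, and in particular handling the interface between scales $\lesssim 1$ (where the potential $V$ and the $\Lambda W$-obstruction live) and scales $\gtrsim 1$ (where $u_L$ behaves like a free wave). Near the origin one must argue that the projection $\Pi^\perp$ kills precisely the non-radiative direction $\Lambda W$ and that no other cancellation can make $E_{\textup{out}}$ degenerate; this requires a careful analysis of the resolvent of $-\Delta+V$ at low energy in dimension $6$, where $\Lambda W \in \dot H^1$ but $\Lambda W \notin L^2$, which is the source of the asymmetry between the two components and of the logarithm. One technical subtlety is that $t\Lambda W$ is a genuine non-radiative solution only for $N\ge 5$ and sits in the $\dot H^1$ component, which is consistent with the fact that we only get the weakened $Z_{-3}$ bound there rather than a clean $\dot H^1$ bound; making this dichotomy quantitative — i.e. showing that $\Lambda W$ and $t\Lambda W$ are the only obstructions and that everything transverse to them radiates at least $\gtrsim \|\cdot\|_{Z_{-3}}^2 + \|\cdot\|_{L^2}^2$ of energy — is the heart of the proof. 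The rest (global well-posedness in $\mathcal H$, the change of variables, and the dyadic summation) is routine given the cited exterior energy estimates.
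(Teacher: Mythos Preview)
Your proposal has a genuine gap in the treatment of the even-in-time part, i.e.\ the control of $\nabla \Pi_{\dot H^1}^\perp u_0$ in $Z_{-3}$. You write that from the exterior-cone free wave estimates one obtains, for each dyadic scale $R$, a bound of the form $R^{0}\big(\int_{R<|x|<2R}|\nabla u_0|^2\big)^{1/2} \lesssim \sqrt{E_{\textup{out}}}$ ``possibly with a logarithmic loss''. But no such estimate exists: in dimension $6$ the free channel estimate \eqref{free_exterior} controls \emph{only} $u_1$, and the cone versions of \cite{DuKeMaMe21P,LiShenWei21P} likewise control only the odd-in-time data (modulo $r^{-4}$). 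There is simply no direct exterior-energy lower bound involving $u_0$ to invoke scale by scale; this is precisely the degeneracy the paper is addressing. Reducing to free waves by a distorted Fourier transform or the $r^{-(N-1)/2}$ conjugation does not help, because after conjugation you still face a one-dimensional wave equation whose even-in-time channels estimate fails.

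The paper's mechanism is entirely different and is the main new idea. One first proves the $u_1$ estimate (Lemma~\ref{L:channelsoddunsoliton}) by a compactness/rigidity argument, and in fact proves it uniformly on every exterior cone $\{r>R+|t|\}$. This gives control of $\Pi^\perp_{L^2_\rho}\partial_t u_L(t)$ for all $t$ and all $\rho>|t|$ (the $Y_R$ norm). One then \emph{integrates in time}: applying the fundamental theorem of calculus twice between $0$ and $t_k\sim 2^k$ yields the elliptic identity
\[
(-\Delta+V)u_0=-\tfrac{1}{t_k}\partial_t u_L(t_k)-(-\Delta+V)\int_0^{t_k}\big(1-\tfrac{t}{t_k}\big)\partial_t u_L(t)\,dt,
\]
which expresses $u_0$ in terms of $\partial_t u_L$ at later times. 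The right-hand side is now controlled by the odd-in-time estimate, and solving the elliptic equation (Lemma~\ref{L:ellipticunsoliton}) produces the $Z_{-3}$ bound on $u_0$; the logarithmic loss and the need to project off $\mathrm{Span}(\Lambda W,\Upsilon)$ come from the generalised kernel of $-\Delta+V$, not from any free-wave degeneracy. A further energy-identity argument (Lemma~\ref{lem:estimateufromutunsoliton2}) upgrades the weighted $L^2$ bound on $u_0$ to one on $\nabla u_0$.

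A smaller point: you have the two obstructions swapped. The non-radiative solution $t\Lambda W$ has initial data $(0,\Lambda W)$ and sits in the \emph{velocity} component, explaining $\Pi_{L^2}^\perp$ on $u_1$; the even-in-time obstruction for $u_0$ is the stationary solution $\Lambda W$ together with the resonance $\Upsilon-\tfrac{t^2}{2}\Lambda W$, whose initial data $(\Upsilon,0)$ fails critically to lie in $\dot H^1$ and is responsible for the $Z_{-3}$ weakening.
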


The weaker estimate \eqref{bd:channelunsoliton} is sufficient to allow the authors to show in \cite{CoDuKeMe22Pv1} the soliton resolution and the inelasticity of collisions of solitons for Equation \eqref{NLWabs} in the radial case. We believe the extension of \eqref{bd:channelunsoliton} to even dimensions $N\equiv 6\mod 4$ to be a technical refinement of the present proof. An analogue in $N=8$ dimensions is given in Theorem \ref{th:channelsunsoliton8d} in Section \ref{sec:8d}, which we similarly believe to extend to even dimensions $N\equiv 4\mod 4$.

\begin{remark}

An analog of the bound on $u_0$ in \eqref{bd:channelunsoliton} for solutions of the free wave equation \eqref{FW} is also valid. Indeed, let $\Phi$ be a smooth radial function with compact support included in $\{r>0\}$ such that $\int \frac{1}{r^4}\Phi dx \neq 0$. Then there exists a constant $C>0$ such that for all solution $u_F$ of \eqref{FW} with initial data $u_0$
\begin{equation}
\label{good_estimate}
\int \nabla u_0\cdot\nabla \Phi \,dx=0 \Longrightarrow
\big\| u_0 \big\|_{Z_{-2}} \lesssim
\big\| \nabla u_0 \big\|_{Z_{-3}} \leq C \sqrt{E_{\textup{out}}},
\end{equation} 
with a proof that is similar to the proof of \eqref{bd:channelunsoliton}. Note that this does not imply the estimate on the projection: $
\big\| \Pi_{\dot{H}^1}\big(\{\Phi\}^{\bot}\big) u_0 \big\|_{Z_{-2}} \lesssim \sqrt{E_{\textup{out}}}$,
which is false.
\end{remark}

\begin{remark}
\label{re:channelsunsoliton}
The stronger estimates \eqref{bd:channelsodddimensionslinearised} fail in six dimensions. Indeed, defining for $\alpha \in \mathbb R$:
$$
\| f\|_{\widetilde{Z}_{\alpha}(\mathbb R^6)}=\sup_{R>0} R^{-3-\alpha} \left(\int_{R<|x|<2R} f^2dx \right)^{\frac 12},
$$
we show in Appendix \ref{ap:counterexample} that the estimate
 \begin{equation}
\label{bad_estimate''}
\| \nabla u_0  \|_{\tilde Z_{-3}} \lesssim \sqrt{E_{\textup{out}}}
\end{equation} 
fails for solutions of \eqref{FW}. This implies that both the projection and the logarithmic loss in \eqref{good_estimate} are necessary, i.e. that the estimates:
\begin{equation*}
\| \nabla \Pi_{\dot{H}^1}(V^{\bot}) u_0 \|_{\widetilde{Z}_{-3}} \lesssim  \sqrt{E_{\textup{out}}},\qquad  \| \nabla u_0 \|_{Z_{-3}} \lesssim \sqrt{ E_{\textup{out}}}, 
\end{equation*} 
where $V$ is any finite dimensional subspace of $\dot{H}^1$ are both false, as they would imply \eqref{bad_estimate''} by rescaling the solution and letting the scaling parameter go to $0$ or $\infty$. For the same reason, the logarithmic loss is necessary in \eqref{bd:channelunsoliton}, i.e. it is not possible to replace $\|\nabla \Pi_{\dot{H}^1}^{\bot}u_0\|_{Z_{-3}}$ by 
$\|\nabla \Pi_{\dot{H}^1}^{\bot}u_0\|_{\widetilde{Z}_{-3}}$ (or even the smaller quantity\footnote{Using Cauchy-Schwarz and the formula $f(r)=-\int_r^{\infty} \partial_rf(s)ds$, one can prove the following variant of Hardy's inequality: $ \|u\|_{Z_{-2}}\lesssim \|\nabla u\|_{Z_{-3}}$ and $ \|u\|_{\tilde{Z}_{-2}}\lesssim \|\nabla u\|_{\tilde{Z}_{-3}}$,  for any $u\in \dot{H}^1$ radial.} $\|\Pi_{\dot{H}^1}^{\bot}u_0\|_{\widetilde{Z}_{-2}}$) in this inequality.
\end{remark}

Our second result extends Theorem \ref{th:channelsunsoliton} to the linearised equation around a multisoliton:
\begin{equation}
 \label{id:linearmultisoliton} 
 \left\{
 \begin{aligned}
 \partial_t^2u-\Delta u+V_{\lambdabf}u=0,\\
 \vec{u}_{\restriction t=0}=(u_0,u_1)\in \mathcal H.
 \end{aligned}\right.
\end{equation}
Above, $\lambdabf \in\Lambda_J=\left\{\lambdabf=(\lambda_1,...,\lambda_J)\in (0,\infty)^J, \ \lambda_J< \lambda_{J-1}<...<\lambda_{1} \right\}$ for some $J\in \mathbb N$ and
$$
V_{\lambdabf}=  \sum_{j=1}^J V_{(\lambda_j)}
$$
where we use the following notations for $\dot H^1$ and $L^2$ rescalings
$$
f_{(\lambda)}(x)=\frac{1}{\lambda^2}f\left(\frac{x}{\lambda}\right) \qquad \mbox{and} \qquad f_{[\lambda]}(x)=\frac{1}{\lambda^3}f\left(\frac{x}{\lambda}\right).
$$
By standard arguments, recalled in Section \ref{subsec:channels_multi}, Equation \eqref{id:linearmultisoliton} is globally well-posed and the outer radiated energy \eqref{id:exteriorenergy} is well-defined.

We define for all $\lambdabf \in \Lambda_J$ the scale separation parameter:
$$
\gamma (\lambdabf)=\max_{1\leq j\leq J-1}\frac{\lambda_{j+1}}{\lambda_j}.
$$
and for $\alpha \in \mathbb R$ the Banach space $Z_{\alpha,\lambdabf}$ associated to the norm
\begin{gather} \label{def:Zalpha}
\| f\|_{Z_{\alpha,\lambdabf}}=\sup_{R>0} \ \frac{R^{-3-\alpha}}{\inf_{1\leq j \leq J} \ \langle \log \frac{R}{\lambda_j} \rangle}  \left(\int_{R<|x|<2R} f^2dx \right)^{\frac 12}.
\end{gather}
Note that for any $J\in \mathbb N$ and $\lambdabf\in \Lambda_J$ there holds $L^2(\mathbb R^6)\subset Z_{-3,\lambdabf}(\mathbb R^6)$ with $ \| f \|_{Z_{-3,\lambdabf}(\mathbb R^6)} \lesssim \| f \|_{L^2(\mathbb R^6)}$, where the implicit constant is independent of $J$ and $\lambdabf$. Thus, $Z_{-3,\lambdabf}$ is a weakening of $L^2$, with a loss that is logarithmic in the distance to the closest soliton. We define:
\begin{align*}
&\Pi_{\dot H^1,\lambdabf}^\perp= \Pi_{\dot H^1}  \left( \text{Span}((\Lambda W)_{(\lambda_j)})_{1\leq j \leq J} \right)^\perp, \qquad \Pi_{L^2,\lambdabf}^{\bot}= \Pi_{L^2}  \left( \text{Span}((\Lambda W)_{[\lambda_j]})_{1\leq j \leq J} \right)^\perp.
\end{align*}

\begin{theorem}[Channels of energy around a multisoliton] \label{pr:channels}

Assume $N=6$. For any $J\in \mathbb N$, there exist $\gamma^*,C>0$ such that for any $\lambdabf \in \Lambda_J$ with $\gamma(\lambdabf)\leq \gamma^*$ if $u$ is a radial solution of \eqref{id:linearmultisoliton} then:
\be \label{bd:channelsmultisoliton}
\| \Pi_{L^2,\lambdabf}^\perp \, u_1 \|_{L^2}+\| \nabla \Pi_{\dot H^1,\lambdabf}^\perp \, u_0 \|_{Z_{-3,\lambdabf}}\leq C\left(\sqrt{E_{\textup{out}}} \ +\gamma (\lambdabf) \| (u_0,u_1)\|_{\mathcal H}\right).
\ee
\end{theorem}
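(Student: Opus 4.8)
\emph{Strategy.} My plan is to bootstrap from the single–soliton estimate of Theorem~\ref{th:channelsunsoliton} by exploiting scale separation: when $\gamma=\gamma(\lambdabf)$ is small, each soliton $W_{(\lambda_j)}$ occupies an essentially disjoint logarithmic range of radii, and on that range the potential $V_{\lambdabf}$ is a perturbation of $V_{(\lambda_j)}$ of relative size $O(\gamma)$. Concretely, I would set $\nu_0=+\infty$, $\nu_J=0$ and $\nu_j=(\lambda_j\lambda_{j+1})^{1/2}$ for $1\le j\le J-1$, so that $(0,\infty)=\bigsqcup_{j=1}^J Z_j$ with $Z_j=(\nu_j,\nu_{j-1})$; each $Z_j$ contains exactly $\lambda_j$, one has $\nu_j\lesssim\gamma^{1/2}\lambda_j\lesssim\gamma\,\nu_{j-1}$, and for $R\in Z_j$ there holds $\inf_{1\le k\le J}\langle\log(R/\lambda_k)\rangle=\langle\log(R/\lambda_j)\rangle$. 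By the definition of $\|\cdot\|_{Z_{-3,\lambdabf}}$ it then suffices to bound, for each fixed $j$,
\[
A_j:=\sup_{R\in Z_j}\frac{1}{\langle\log(R/\lambda_j)\rangle}\Big(\int_{R<|x|<2R}\big|\nabla\Pi^\perp_{\dot H^1,\lambdabf}u_0\big|^2\,dx\Big)^{1/2},\qquad\text{as well as}\qquad \big\|\Pi^\perp_{L^2,\lambdabf}u_1\big\|_{L^2},
\]
by the right–hand side of \eqref{bd:channelsmultisoliton}.

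\emph{Per–scale reduction.} Fix $j$. Using finite speed of propagation together with a radial cutoff $\chi_j$ equal to $1$ on $Z_j$ and supported in a fixed dilate of $Z_j$, I would write the equation as the linearisation around the single soliton $W_{(\lambda_j)}$:
\[
\partial_t^2 w_j-\Delta w_j+V_{(\lambda_j)}w_j=F_j,\qquad F_j=-\chi_j\!\sum_{k\ne j}V_{(\lambda_k)}u-[\Delta,\chi_j]u,\qquad \vec w_j(0)=(\chi_j u_0,\chi_j u_1),
\]
so that $w_j$ coincides with $u$ on $\{\chi_j=1\}\supset Z_j$. Applying to $w_j$ the estimate underlying the proof of Theorem~\ref{th:channelsunsoliton} — which for the linearisation around a single $W_{(\lambda)}$ yields a channels bound tolerating a source $F$ measured in a norm $\mathcal N_\lambda$ adapted both to the $|x|^{-4}$ decay of $V$ (since $W^{4/(N-2)}=W\sim|x|^{-4}$ in $N=6$) and to the non–radiative directions $\big((\Lambda W)_{(\lambda)},(t\Lambda W)_{(\lambda)}\big)$ — and tracking how $E_{\textup{out}}(w_j)$ relates to $E_{\textup{out}}(u)$ through $\chi_j$, one is reduced to establishing $\|F_j\|_{\mathcal N_{\lambda_j}}\lesssim\sqrt{E_{\textup{out}}}+\gamma\|(u_0,u_1)\|_{\mathcal H}$, with the global projectors replaced there by the single–scale projectors $\Pi^{\perp,(j)}$ off $\mathrm{Span}\,(\Lambda W)_{(\lambda_j)}$, resp.\ $\mathrm{Span}\,(\Lambda W)_{[\lambda_j]}$.

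\emph{The source, and passing to multiscale projections.} For the potential part I would use $|V_{(\lambda_k)}(x)|\lesssim\lambda_k^{-2}\min(1,(\lambda_k/|x|)^4)$: on $\mathrm{supp}\,\chi_j$, which is separated from every $\lambda_k$, $k\ne j$, by a factor $\gtrsim\gamma^{-1/2}$, this is $\lesssim\gamma^{2}$ times a scale–$\lambda_j$ Hardy weight of the kind already absorbed in $\mathcal N_{\lambda_j}$, so $\big\|\chi_j\sum_{k\ne j}V_{(\lambda_k)}u\big\|_{\mathcal N_{\lambda_j}}\lesssim\gamma\|(u_0,u_1)\|_{\mathcal H}$. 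The commutator $[\Delta,\chi_j]u$ is supported in thin shells near the zone boundaries $\nu_j\lesssim\gamma^{1/2}\lambda_j$ and $\nu_{j-1}\gtrsim\gamma^{-1/2}\lambda_j$; on those shells one controls $u$ and $\nabla u$ by the estimate at the \emph{adjacent} scale, so the whole argument must be run as an induction over the $J$ scales, from the outermost $\lambda_1$ (outside of which one has a genuine free–type exterior and hence a true channels estimate) inward. Combining this with the precise form of $\mathcal N_{\lambda_j}$ — so that a source concentrated in a shell at radius $\sim\gamma^{\pm1/2}\lambda_j$ costs a positive power of $\gamma$, which in particular absorbs the linearly–growing–in–time contribution of the non–radiative directions — would give $\|[\Delta,\chi_j]u\|_{\mathcal N_{\lambda_j}}\lesssim\sqrt{E_{\textup{out}}}+\gamma\|(u_0,u_1)\|_{\mathcal H}$. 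Finally, since $\Lambda W\sim|x|^{-4}$ at infinity and is bounded near $0$, a direct computation gives $\big|\langle(\Lambda W)_{(\lambda_j)},(\Lambda W)_{(\lambda_k)}\rangle_{\dot H^1}\big|+\big|\langle(\Lambda W)_{[\lambda_j]},(\Lambda W)_{[\lambda_k]}\rangle_{L^2}\big|\lesssim\gamma$ for $j\ne k$; hence both families of kernel elements have Gram matrix $\mathrm{Id}+O(\gamma)$ uniformly in $J$ and $\lambdabf$, the projectors $\Pi^\perp_{\dot H^1,\lambdabf}$, $\Pi^\perp_{L^2,\lambdabf}$ are well defined, and $\big\|(\Pi^\perp_{\dot H^1,\lambdabf}-\Pi^{\perp,(j)}_{\dot H^1})u_0\big\|_{\dot H^1}+\big\|(\Pi^\perp_{L^2,\lambdabf}-\Pi^{\perp,(j)}_{L^2})u_1\big\|_{L^2}\lesssim\gamma\|(u_0,u_1)\|_{\mathcal H}$, which costs only $\gamma\|(u_0,u_1)\|_{\mathcal H}$ to insert. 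Taking the supremum over $j$ of the bounds on $A_j$ yields the $Z_{-3,\lambdabf}$–part of \eqref{bd:channelsmultisoliton}, and summing the $L^2(Z_j)$–bounds over the disjoint $Z_j$ yields the $L^2$–part.

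\emph{Main obstacle.} The heart of the matter is the commutator (transition) estimate: near a zone boundary two neighbouring soliton potentials — and their non–radiative directions $(t\Lambda W)_{(\lambda_j)}$, which grow linearly in time — are of comparable size, so $[\Delta,\chi_j]u$ is \emph{not} small in a plain $L^1_tL^2_x$ source norm and cannot be handled by energy estimates alone. Making the scheme close will require the refined source norm coming out of the proof of Theorem~\ref{th:channelsunsoliton}, the outside–in induction over the scales to feed in the control of $u$ on the transition shells, and a quantitative, $J$–uniform control of the mutual interaction of the $J$ non–radiative directions $\big((\Lambda W)_{(\lambda_j)},(t\Lambda W)_{(\lambda_j)}\big)_{1\le j\le J}$ — which is precisely why $\gamma^*$ and $C$ in the statement are allowed to depend on $J$.
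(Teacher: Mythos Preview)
Your overall picture — treat each scale as a perturbation of a single soliton and run an outside–in induction — is the right one, and indeed the paper also inducts over $j$, from outside to inside. But the concrete mechanism you propose diverges from the paper at the key technical step, and the difference matters precisely at the obstacle you yourself identify.

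\textbf{How the paper avoids the commutator.} You localise by a spatial cutoff $\chi_j$ and then have to control $[\Delta,\chi_j]u$ on the transition shells. The paper never introduces such a cutoff on the solution. For the \emph{odd} part (Lemma~\ref{lem:channelsodd}) it applies the single–soliton exterior channels estimate on the full cone $\{|x|>\eta\lambda_j+|t|\}$ (Claim~\ref{Cl:rescaled_unsoliton}, which is the $R>0$ version of Lemma~\ref{L:channelsoddunsoliton}), treating $\sum_{k\neq j}V_{(\lambda_k)}u$ as a right–hand side in $L^1L^2$; no commutator appears, and the source is controlled by $\|W_{(\lambda_k)}\|_{L^2L^4}\|u\|_{L^2L^4}$ with the space–time Strichartz norm of $u$ supplied by the induction hypothesis on the \emph{outer} cone. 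For the \emph{even} part (Lemma~\ref{L:channelseven}) the paper writes $u=v_j+\tilde v_j$ where $v_j$ solves the single–soliton equation with the \emph{same} initial data $(u_0,0)$ and $\tilde v_j$ has \emph{zero} initial data; the source for $\tilde v_j$ is exactly $-\sum_{k\neq j}V_{(\lambda_k)}v_j$, again with no commutator. The localisation is instead performed on the \emph{initial data} of $v_j$ (the pieces $v_j^1,v_j^2,v_j^3$), and the resulting source is estimated in $L^1L^2$ via the dedicated bounds \eqref{interactionsolitonZ-31}–\eqref{interactionsolitonZ-33}, which convert $Z_{-3}$ control of the data into $L^1L^2$ control of $V_{(\lambda_i)}v_j$ with a gain of a power of $\gamma$. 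Your ``refined source norm $\mathcal N_\lambda$'' is not something the single–soliton theorem actually produces; the paper uses the concrete exterior estimates of Section~\ref{subsec:channelsunsoliton} (Lemmas~\ref{L:channelsoddunsoliton} and~\ref{L:channelsevenunsoliton2}), which are stated on cones $\{|x|>R+|t|\}$ precisely so that no commutator is needed.

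\textbf{A point you miss entirely: the resonance $\Upsilon$.} In the even case, applying the exterior single–soliton estimate (Lemma~\ref{L:channelsevenunsoliton2}) to $v_j$ on $\{|x|>R_j^-/24+|t|\}$ only gives $u_0=c_j\,\Upsilon_{(\lambda_j)}+d_j\,(\Lambda W)_{(\lambda_j)}+\bar u_{0,j}$ with $\bar u_{0,j}$ controlled in $Z_{-3,\lambda_j}$. The function $\Upsilon$ is a genuine resonance ($\Upsilon\sim c_0 r^{-4}$ at $0$), so $c_j$ is \emph{not} a priori small, and $\Upsilon_{(\lambda_j)}$ is large in $Z_{-3}$ on the inner zone. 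The paper eliminates $c_j$ by comparing $\nabla\Upsilon_{(\lambda_j)}$ and $\nabla(\Lambda W)_{(\lambda_j)}$ on the shell $R_j^-<r<2R_j^-$ together with the orthogonality $\langle u_0,(\Lambda W)_{(\lambda_j)}\rangle_{\dot H^1}=0$, obtaining $|c_j|\lesssim \gamma^{-1/2}(\lambda_{j+1}/\lambda_j)^2|\log(\cdot)|\,(\cdots)$ and then closing. This is the analogue, at scale $\lambda_j$, of the passage from Lemma~\ref{L:channelsevenunsoliton2} to Lemma~\ref{L:channelsevenunsoliton}, and it is the technical heart of the even part; your outline does not account for it.
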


\begin{remark}

While in odd dimensions, an analogue of \eqref{bd:channelsmultisoliton} with the second term in the left-hand side of \eqref{bd:channelsmultisoliton} replaced by the $\dot H^1$ norm is true, see Corollary 3.3 in \cite{DuKeMe19Pb}, this cannot hold true in six dimensions, even with the weaker norm $\|\Pi_{\dot H^1,\lambdabf}^\perp u_0\|_{\widetilde{Z}_{-2}}$ in the left-hand side of \eqref{bd:channelsmultisoliton}, 
see Remark \ref{re:channelsunsoliton}.

\end{remark}

The paper is organised as follows. Section \ref{sec:basicestimates} contains some notation and basic estimates. Then Section \ref{subsec:channelsunsoliton} is devoted to the proof of Theorem \ref{th:channelsunsoliton} ; a generalised estimate for odd in time solutions on domains $\{|x|>R+|t-\overline t|\}$ for $R>0$ is first proved in Lemma \ref{L:channelsoddunsoliton}, and then is used to prove the analogue but weakened generalised estimate for even solutions in Lemmas \ref{L:estimateufromutunsoliton} and \ref{lem:estimateufromutunsoliton2}. Based on these generalised estimates, the proof of Theorem \ref{pr:channels} is then given in Section \ref{subsec:channels_multi}. Section \ref{sec:8d} treats the eight dimensions, see Theorem \ref{th:channelsunsoliton8d}. A counter-example to the non-weakened estimates is given in Appendix \ref{ap:counterexample}, and a few technical estimates are given in Appendix \ref{A:estimates}.

\section{Acknowledgements}

This work was supported by the National Science Foundation [DMS-2153794 to C.K.]; the CY Initiative of Excellence Grant
"Investissements d'Avenir" [ANR-16-IDEX-0008 to C.C. and F.M.]; and the France and Chicago Collaborating in the Sciences [FACCTS award \#2-91336 to C.C. and F.M.]

\section{Notations and estimates for free waves} \label{sec:basicestimates}
If $u$ is a function of space and time, we write $\vec{u}=(u,\partial_tu)$. 

For $R\geq 0$, $p\in (1,\infty)$ we write 
\begin{equation*}
\|u\|^p_{L^p_R}=\int_{R}^{\infty} (u(r))^pr^5dr,\quad 
\|u\|^2_{\dot{H}^1_R}=\int_{R}^{\infty}(\partial_ru(r))^2r^5dr,
\end{equation*}
We let
$$
\mathcal H_R=\dot H^1_R\times L^2_R.
$$

\begin{remark}
\label{R:ext}
Let $R>0$ and $u$ be a radial function defined for $r>R$. Then the extension $u_R$ of $u$ defined by
$$ u_R(r)=u(r),\; r>R,\quad u_R(r)=3u(2R-r)-2u(3R-2r), \; 0<r<R,$$
satisfies, for all $p\geq 1$
$$\|u_R\|_{L^p(\RR^6)}\leq C\|u\|_{L^p_R},\quad \|\partial_ru_R\|_{L^p(\RR^6)}\leq C\|\partial_r u\|_{L^p_R}$$
where the constant $C$ is independent of $u$, $p$ and $R$.
\end{remark}
For $(t,R)\in \Rb\times (0,\infty)$, we let 
$$\Ce_{t,R}=\Big\{ \left(\overline{t},\overline{r}\right)\in \RR\times (0,\infty)\,:\, \overline{r}>R+|t-\overline{t}|\Big\}$$
be the exterior cone. The exterior energy is 
$$\|u\|_{E_{t,R}}=\sup_{\overline{t}\in \RR} \left\|\vec{u}\left( \overline{t} \right)\right\|_{\dot{H}^1_{R+|t-\overline{t}|}\times L^2_{R+|t-\overline{t}|}}.$$

We introduce the Strichartz norms:
\begin{equation}
 \|u\|_{L^p_tL^q_r(r>R+|t-\overline t|)}=\left( \int_{\overline{t}\in \RR} \left( \int_{r>R+|t-\overline{t}|}\left|u(\overline{t},r)\right|^qr^5dr \right)^{\frac pq}d\overline{t} \right)^{\frac 1p}=\|u\|_{L^pL^q\left( \Ce_{t,R} \right)}.
\end{equation} 
We recall the Strichartz estimates: if $(u_0,u_1)\in \dot{H}^1\times L^2(\RR^6)$ and 
$$u(t)=\cos t\sqrt{-\Delta}u_0+\frac{\sin t\sqrt{-\Delta}}{\sqrt{-\Delta}}u_1+\int_0^t \frac{\sin(t-t')}{\sqrt{-\Delta}} f(t')dt',$$
with $f\in L^1_tL^2_r(\RR^6)$, we have
\begin{equation}
\label{CK1}
 \sup_{t\in \RR}\left\|\vec{u}(t)\right\|_{\dot{H}^1\times L^2}+\|u\|_{L^2_tL^{4}_r}
 \lesssim \|(u_0,u_1)\|_{\dot{H}^1\times L^2}+\|f\|_{L^1_tL^2_r}.
\end{equation}
We next state the analog of \eqref{CK1} for exterior cones
\begin{lemma}
 \label{L:CK1.3}
 Assume that $(u_0,u_1)\in \dot{H}^1\times L^2(\RR^6)$, $u$ is the solution of $\partial_t^2u-\Delta u=f$, $u_{\restriction t=t_0}=u_0$, $\partial_tu_{\restriction t=t_0}=u_1$ with $f\in L^1_tL^2_r$ (all radial). Then, for any $R_0>0$ (with the implicit constant independent of $(t_0,R_0)$, we have
$$
 \left\|\vec{u}(t)\right\|_{E_{t_0,R_0}}+\|u\|_{L^2_tL^{4}_r\left( \Ce_{t_0,R_0} \right)}\lesssim \|(u_0,u_1)\|_{\dot{H}^1\times L^2}+\|f\|_{L^1_tL^2_r\left( \Ce_{t_0,R_0} \right)}.
$$
\end{lemma}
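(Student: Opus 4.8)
The plan is to reduce the statement to the whole-space Strichartz estimate \eqref{CK1} by means of finite speed of propagation. Translating in time, I may assume $t_0=0$; the radius $R_0>0$ is kept as a free parameter, and since every constant produced below is inherited from \eqref{CK1}, the resulting implicit constant will be independent of $(t_0,R_0)$, as required.

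First I would truncate the forcing to the exterior cone: set $\tf=f\,{\indic}_{\Ce_{0,R_0}}$, which is still radial with $\|\tf\|_{L^1_tL^2_r(\RR^6)}=\|f\|_{L^1_tL^2_r(\Ce_{0,R_0})}$, and let $\tu$ be the solution of $\partial_t^2\tu-\Delta\tu=\tf$, $\vec{\tu}_{\restriction t=0}=(u_0,u_1)$, which exists in $C_t(\dot H^1\times L^2)\cap L^2_tL^4_r$ by the usual Duhamel argument. Then \eqref{CK1} gives
\[
\sup_{t\in\RR}\|\vec{\tu}(t)\|_{\dot H^1\times L^2}+\|\tu\|_{L^2_tL^4_r(\RR^6)}\ \lesssim\ \|(u_0,u_1)\|_{\dot H^1\times L^2}+\|f\|_{L^1_tL^2_r(\Ce_{0,R_0})}.
\]

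The key step is then to check that $u=\tu$ on $\Ce_{0,R_0}$. The difference $w=u-\tu$ solves the free wave equation with vanishing Cauchy data at $t=0$ and forcing $f-\tf$, which vanishes identically on $\Ce_{0,R_0}$. For any $(\overline t,\overline x)$ with $|\overline x|>R_0+|\overline t|$, the backward solid light cone issued from $(\overline t,\overline x)$ and truncated at time $0$ is contained in $\Ce_{0,R_0}$: at one of its points $(s,y)$ one has $|y|\ge|\overline x|-|\overline t-s|>R_0+|\overline t|-|\overline t-s|=R_0+|s|$. Hence on this backward cone $w$ has no forcing and on its base no data, so the energy inequality on truncated cones forces $w(\overline t,\overline x)=0$; thus $w\equiv0$ on $\Ce_{0,R_0}$. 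I expect this finite speed of propagation step — justifying it at the $\dot H^1\times L^2$ energy level by regularising the data and forcing and passing to the limit in the truncated-cone energy identity, together with the domain-of-dependence bookkeeping just performed — to be the only point requiring genuine care; the rest is bookkeeping on top of \eqref{CK1}.

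It then remains to restrict the norms back to the cone. For each fixed $\overline t$, the time-$\overline t$ slice of $\Ce_{0,R_0}$ is exactly $\{r>R_0+|\overline t|\}$, on which $\vec u(\overline t)=\vec{\tu}(\overline t)$, so $\|\vec u(\overline t)\|_{\dot H^1_{R_0+|\overline t|}\times L^2_{R_0+|\overline t|}}\le\|\vec{\tu}(\overline t)\|_{\dot H^1\times L^2}$; taking the supremum over $\overline t$ controls $\|\vec u(t)\|_{E_{0,R_0}}$. Likewise $\|u\|_{L^2_tL^4_r(\Ce_{0,R_0})}=\|\tu\|_{L^2_tL^4_r(\Ce_{0,R_0})}\le\|\tu\|_{L^2_tL^4_r(\RR^6)}$. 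Combining these two bounds with the displayed Strichartz estimate for $\tu$ yields the lemma. (Equivalently one could split $u$ into its free evolution plus the Duhamel term and treat the two separately, but the direct truncation above is shorter.)
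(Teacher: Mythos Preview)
Your proof is correct and follows essentially the same route as the paper: truncate the forcing to the exterior cone, invoke finite speed of propagation to identify $u$ with the modified solution there, and restrict the global Strichartz bound \eqref{CK1}. The only minor difference is that the paper additionally restricts the initial data to $\{r>R_0\}$ and re-extends via Remark~\ref{R:ext}, thereby obtaining the slightly sharper right-hand side $\|(u_0,u_1)\|_{\dot H^1_{R_0}\times L^2_{R_0}}$; for the lemma as stated, with the full $\dot H^1\times L^2$ norm on the right, your simpler version suffices.
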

\begin{proof}
 Without loss of generality, we assume $t_0=0$. By \eqref{CK1} above, we have 
$$
 \sup_{t\in \RR}\left\|\vec{u}(t)\right\|_{\dot{H}^1\times L^2}+\|u\|_{L^2_tL^{4}_r}
 \lesssim \|(u_0,u_1)\|_{\dot{H}^1\times L^2}+\|f\|_{L^1_tL^2_r}.$$
 Recall that by finite speed of propagation, the solution $u$ restricted to $\Ce_{0,R_0}$ depends only on $(u_0,u_1)$ on $r>R_0$, and $f$ on $\Ce_{0,R_0}$. Clearly $\tf=f\indic_{\Ce_{0,R_0}}\in L^1_tL^2_r$. Let now $(\tu_0,\tu_1)$ be any extension of $(u_0,u_1)$ restricted to $r>R_0$, $(\tu_0,\tu_1)\in \dot{H}^1\times L^2$, with $\left\|(\tu_0,\tu_1)\right\|_{\dot{H}^1\times L^2} \lesssim \|(u_0,u_1)\|_{\dot{H}^1_{R_0}\times L^2_{R_0}}$. Let $\tilde{u}$ solve $\Box \tu=\tf$. By finite speed of propagation, $u=\tu$ on $\Ce_{0,R_0}$. But then, by definition of the restriction norms, our left-hand side in the statement is smaller than the corresponding global norms for $\tu$, which can be bounded by \eqref{CK1}. Hence, the left-hand bounded by $C\left\{ \|(u_0,u_1)\|_{\dot{H}^1_{R_0}\times L^2_{R_0}}+\|\tf \|_{L^1_tL^2_r}\right\}$ which is the desired result.
\end{proof}
\begin{proposition}[Channels in 6d, with right-hand side]
 \label{P:CK1.4}
 Let $u\in \RR\times \RR^6$, solve 
\begin{equation*}
 \Box u=f,\quad u_{\restriction t=0}=u_0,\quad \partial_tu_{\restriction t=0}=u_1,
\end{equation*} 
$f,u_0,u_1$ radial. Fix $R>0$ and write $u_1=\frac{c_1}{r^4}+u_1^{\bot}$, where $\int_{R}^{\infty} u_1^{\bot}\frac{1}{r^4}r^5dr=0$. Then,
\begin{equation}
 \label{CK1.1}
 \left\|u_1^{\bot}\right\|_{L^2_R}\lesssim \left[\lim_{t\to\infty} \left\|\vec{u}(t)\right\|_{\dot{H}^1_{R+|t|}\times L^2_{R+|t|}}+\|f\|_{L^1_tL^2_r\left( \Ce_{0,R} \right)}\right].
\end{equation}
 \end{proposition}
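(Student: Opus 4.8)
The plan is to reduce \eqref{CK1.1} to the known exterior energy estimate for the \emph{free} wave equation in six dimensions, using the Strichartz bound of Lemma \ref{L:CK1.3} to absorb the contribution of the forcing term $f$. First I would split $u = u_F + w$, where $u_F$ solves the free wave equation $\Box u_F = 0$ with the same data $(u_0,u_1)$, and $w$ solves $\Box w = f$ with zero data at $t=0$; by finite speed of propagation only the values of $(u_0,u_1)$ on $\{r>R\}$ and of $f$ on $\Ce_{0,R}$ enter the cone $\Ce_{0,R}$, so I may freely replace $(u_0,u_1)$ by an extension controlled by $\|(u_0,u_1)\|_{\dot H^1_R\times L^2_R}$ and $f$ by $f\indic_{\Ce_{0,R}}$, exactly as in the proof of Lemma \ref{L:CK1.3}.

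Next I would invoke the sharp channel-of-energy estimate for free radial waves in $N=6$: since $6\equiv 6 \bmod 4$, by \eqref{free_exterior} (and its extension to exterior cones $\{|x|>R+|t|\}$ from \cite{KeLaLiSc15,DuKeMaMe21P,LiShenWei21P}), the $L^2$-part of the data is controlled by the exterior energy, and more precisely the component of $u_1$ that is $L^2_R$-orthogonal to the single non-radiative profile $r^{-4}$ (the restriction to $\{r>R\}$ of the $L^2$-kernel element coming from $\Lambda W_{\text{free}}$, i.e. the obstruction in even dimensions) satisfies $\|u_1^\perp\|_{L^2_R}\lesssim \lim_{t\to\infty}\|\vec{u}_F(t)\|_{\dot H^1_{R+|t|}\times L^2_{R+|t|}}$. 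This is where the decomposition $u_1 = \frac{c_1}{r^4} + u_1^\perp$ with $\int_R^\infty u_1^\perp \frac{1}{r^4} r^5\, dr = 0$ is used: $r^{-4}$ is precisely the obstruction, so projecting it out makes the free estimate applicable. Then I would bound the error: $\|\vec{u}(t)-\vec{u}_F(t)\|_{\dot H^1_{R+|t|}\times L^2_{R+|t|}} = \|\vec w(t)\|_{\dot H^1_{R+|t|}\times L^2_{R+|t|}}$, which is $\lesssim \|f\|_{L^1_tL^2_r(\Ce_{0,R})}$ by Lemma \ref{L:CK1.3} (the energy estimate part, with zero data), uniformly in $t$, hence also in the limit $t\to\infty$. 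Combining, $\|u_1^\perp\|_{L^2_R}\lesssim \lim_{t\to\infty}\|\vec u_F(t)\|_{\dot H^1_{R+|t|}\times L^2_{R+|t|}} \lesssim \lim_{t\to\infty}\|\vec u(t)\|_{\dot H^1_{R+|t|}\times L^2_{R+|t|}} + \|f\|_{L^1_tL^2_r(\Ce_{0,R})}$, which is \eqref{CK1.1}.

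One subtlety I would need to be careful about is that the quantity $\lim_{t\to\infty}\|\vec u(t)\|$ should really be read as a $\limsup$ (or $\liminf$) and the triangle inequality applied at fixed large $t$ before passing to the limit, so that the uniform-in-$t$ error bound from Lemma \ref{L:CK1.3} transfers cleanly; the main obstacle, however, is making sure that the free exterior estimate is being applied to exactly the right orthogonal complement. The free channel estimate on $\{r>R\}$ controls $u_1$ modulo the one-dimensional space spanned by $r^{-4}$ (the non-radiative direction specific to $6$d and to the exterior-cone geometry), and I must check that the orthogonality condition $\int_R^\infty u_1^\perp r^{-4} r^5\, dr = 0$ imposed in the statement coincides with the orthogonality built into that estimate; this is a matter of identifying the non-radiative element and its $L^2_R$-orthogonality relation, after which the argument is routine. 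The extension of the free estimate from the light cone $\{|x|>|t|\}$ to $\{|x|>R+|t|\}$ is quoted from \cite{KeLaLiSc15,DuKeMaMe21P,LiShenWei21P} and handles the general $R>0$ case without rescaling issues.
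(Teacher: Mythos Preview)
Your proposal is correct and follows essentially the same approach as the paper: split $u=\tilde u+v$ with $\tilde u$ the free solution with data $(u_0,u_1)$ and $v$ the Duhamel part with zero data and forcing $f\indic_{\Ce_{0,R}}$, apply the free exterior channel estimate \cite[Proposition 3.8]{DuKeMaMe21P} to $\tilde u$ (which is exactly the orthogonal-complement bound you describe), then control $\vec v$ uniformly by $\|f\|_{L^1L^2(\Ce_{0,R})}$ via the energy/Strichartz estimate and use the triangle inequality at fixed $t$ before letting $t\to\infty$. The paper's proof is just a terse version of what you wrote, with the orthogonality identification already packaged into the cited proposition.
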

\begin{proof}
 Let $\tu$ be the solution of the homogeneous wave equation with initial data $(u_0,u_1)$, and let $v$ be the solution of the inhomogeneous equation with right-hand side $f\indic_{\Ce_{0,R}}$, and $(0,0)$ initial data. Then $u=\tu+v$ on $\Ce_{0,R}$, by finite speed of propagation. By \cite[Proposition 3.8]{DuKeMaMe21P}, we have  
 \begin{multline*}
\left\|u_1^{\bot}\right\|_{L^2_R}=\left\|\tu_1^{\bot}\right\|_{L^2_R}\leq \frac{20}{3}\lim_{t\to\infty} \left\| \left(\vec{u}-\vec v\right)(t)\right\|_{\dot{H}^1_{R+|t|}\times L^2_{R+|t|}}\\
\leq \frac{20}{3} \left( \lim_{t\to\infty} \left\|\vec{u}(t)\right\|_{\dot{H}^1_{R+|t|}\times L^2_{R+|t|}}+\sup_{t}\left\|\vec{v}(t)\right\|_{\dot{H}^1_{R+|t|}\times L^2_{R+|t|}}\right)\\
\leq \frac{20}{3} \lim_{t\to\infty} \left\|\vec{u}(t)\right\|_{\dot{H}^1_{R+|t|}\times L^2_{R+|t|}}+C\|\tf\|_{L^1_tL^2_r} \\
\leq \frac{20}{3} \lim_{t\to\infty} \left\|\vec{u}(t)\right\|_{\dot{H}^1_{R+|t|}\times L^2_{R+|t|}}+C\|f\|_{L^1_tL^2_r\left( \Ce_{0,R} \right)}, 
 \end{multline*}
 by  \eqref{CK1} and Lemma \ref{L:CK1.3}.
\end{proof}

\section{Channels of energy around the ground state} \label{subsec:channelsunsoliton}

It is easy to check that the solution $u$ of \eqref{LW} is globally well-posed in $\HHH$. Indeed, the local well-posedness can be proved by Strichartz estimates \eqref{CK1} and the fact that $W$ is in $L^4$. The global well-posedness follows from the linearity of the equation. Furthermore, using finite speed of propagation, the Strichartz estimates of Lemma \ref{L:CK1.3} and that $\lim_{R\to \infty} \| \indic (R<|t|<|x|-R)W \|_{L^2L^4}=0$, the outer radiated energy \eqref{id:exteriorenergy} is well-defined.

\subsection{Channels of energy around the ground state for odd in time solutions}

We first establish, for odd in time solutions, a more general estimate result than that of Theorem \ref{th:channelsunsoliton} for $\pa_t u$, valid for the exterior of any wave cone $\{r\geq R+|t|\}$. The zeros of $-\Delta +V$ play an important role. We recall that there holds
\begin{equation}\label{id:LambdaWGammazeroes}
-\Delta \Lambda W+V\Lambda W=0 \qquad \mbox{and} \qquad -\Delta \Gamma+V\Gamma =0,
\end{equation}
where the second function $\Gamma(r)=-\Lambda W(r)\int_{1}^r s^{-5}(\Lambda W(s))^{-2}ds$ satisfies the Wronskian relation:
\begin{equation} \label{id:LambdaWGammawronskian}
\forall r>0, \qquad \Gamma (r)\frac{d \Lambda W(r)}{dr}-\Lambda W(r) \frac{d \Gamma(r)}{dr}=r^{-5}
\end{equation}
and the asymptotics for some $c,c'\neq 0$:
\begin{equation}\label{id:asymptoticGamma}
\Gamma(r)= cr^{-4}+O(r^{-2})\qquad \mbox{as }r\rightarrow 0, \qquad \mbox{and }\Gamma(r)=c'+O(r^{-2}) \quad \mbox{as }r\rightarrow \infty,
\end{equation}
which propagates for higher order derivatives:
\begin{equation}\label{id:asymptoticparGamma}
\frac{d\Gamma(r)}{dr}= -4cr^{-5}+O(r^{-3})\qquad \mbox{as }r\rightarrow 0, \qquad \mbox{and }\frac{d\Gamma(r)}{dr}=dr^{-3}+O(r^{-5})\quad \mbox{as }r\rightarrow \infty,
\end{equation}
where $d\neq 0$.
For $\chi_{0}$ a smooth cut-off function such that $\chi_{0}(r)=1$ for $r\leq 10$ and $\chi_{0}(r)=0$ for $r\geq 12$ we define $\tGamma$ as the truncation of the $\Gamma$ profile:
\begin{equation}\label{id:deftildeGamma}
\tGamma(r)=\chi_{0}(r)\Gamma (r)
\end{equation}
We then define for $R\geq 0$ the projection $\Pi^\perp_{L^2_R} u$ as follows:
\begin{equation}\label{id:defpiL2R1}
\Pi^\perp_{L^2_R}=\left\{ \begin{array}{l l} \Pi_{L^2_R} \left( \text{Span}(\Lambda W) \right)^\perp \qquad \mbox{for }R=0 \mbox{ or }R\geq 1, \\ \Pi_{L^2_R} \left( \text{Span}( \Lambda W,\tGamma) \right)^\perp \qquad \mbox{for }0<R< 1.  \end{array} \right.
\end{equation}
This section is devoted to the proof of the following result.


\begin{lemma}[Channels of energy for the odd in time part of solutions around a soliton] \label{L:channelsoddunsoliton}
There exists $C>0$ (independent of $R$) such that the following holds true for all $R\geq 0$. Let $u_L$ be a radial solution of \eqref{LW} for $r\geq R+|t|$, with $\vec u_L(0)\in \mathcal H_R$. Then:
$$
\| \Pi_{L^2_R}^\perp u_1 \|_{L^2}^2\leq C \sum_{\pm } \lim_{t\rightarrow \pm \infty}  \int_{r\geq R+|t|} |\nabla_{t,x}u_L|^2dx.
$$
\end{lemma}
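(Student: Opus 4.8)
The plan is to reduce the problem about the linearised operator $-\Delta + V$ to the free wave estimate of Proposition \ref{P:CK1.4} by treating the potential term $Vu_L$ as a right-hand side, and then to absorb the resulting error back into the left-hand side by a fixed-point/continuity argument in the scale parameter $R$. First I would set $f = -V u_L$ and observe that, on the cone $\Ce_{0,R}$, $u_L$ solves $\Box u_L = f$. Since $V = -\frac{N+2}{N-2}W^{4/(N-2)}$ decays like $|x|^{-4}$ in $N=6$, and $W \in L^4$, the standard Strichartz bound \eqref{CK1} together with Lemma \ref{L:CK1.3} gives control of $\|f\|_{L^1_tL^2_r(\Ce_{0,R})}$ by $\|u_L\|_{L^2_tL^4_r(\Ce_{0,R})}$, hence by $\|\vec u_L(0)\|_{\mathcal H_R}$; but this is too crude because it controls the full $\mathcal H_R$ norm, not just $E_{\textup{out}}$. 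The key point is that $V$ decays fast enough that $\|f\|_{L^1_t L^2_r(\Ce_{0,R})}$ is actually \emph{small} relative to $\|u_L\|_{L^2 L^4(\Ce_{0,R})}$ when restricted far out, i.e. one gains a smallness factor from $\|\indic_{\Ce_{0,R}} W^{4/(N-2)}\|$ in an appropriate norm as $R$ grows. For small $R$ (in particular $R=0$) this smallness is not available directly, which is exactly why the projection $\Pi^\perp_{L^2_R}$ includes the extra truncated zero $\tGamma$ for $0<R<1$.

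The main steps I would carry out are: (i) Apply Proposition \ref{P:CK1.4} to $u_L$ with right-hand side $f=-Vu_L$: writing $u_1 = \frac{c_1}{r^4} + u_1^\perp$ with $u_1^\perp \perp r^{-4}$ in $L^2_R$, one gets $\|u_1^\perp\|_{L^2_R} \lesssim \lim_{t\to\infty}\|\vec u_L(t)\|_{\dot H^1_{R+|t|}\times L^2_{R+|t|}} + \|Vu_L\|_{L^1_tL^2_r(\Ce_{0,R})}$, and symmetrically for $t\to-\infty$. (ii) Estimate the error term: by Hölder in $r$ (with the measure $r^5\,dr$) and then Strichartz on the cone, $\|Vu_L\|_{L^1_tL^2_r(\Ce_{0,R})} \lesssim \|W^{4/(N-2)}\|_{L^2_t L^{6/(something)}}\cdot\|u_L\|_{L^2_tL^4_r(\Ce_{0,R})}$; the crucial computation is that $W^{4/(N-2)} = W^{2}$ in $N=6$ behaves like $(1+r^2/8)^{-2}$, so its relevant mixed norm over $\Ce_{0,R}$ is finite and, more importantly, the total bound on $u_L$ can be closed. (iii) Relate $u_1^\perp$ (orthogonal to $r^{-4}$ only) to $\Pi^\perp_{L^2_R}u_1$ (orthogonal to $\Lambda W$, or to $\mathrm{Span}(\Lambda W,\tGamma)$): one must show that $\Lambda W$ and $\tGamma$ span, modulo lower-order pieces controlled by the exterior energy, the same "bad" directions that the free estimate leaves uncontrolled, using the asymptotics \eqref{id:asymptoticGamma}–\eqref{id:asymptoticparGamma} — indeed $\Gamma(r)\sim cr^{-4}$ as $r\to 0$ matches the free obstruction $r^{-4}$, while $\Lambda W \sim c r^{-4}$ as $r\to\infty$. (iv) Run the argument on a single dyadic scale and then sum, or alternatively set up a continuity argument: define $\phi(R) = \|\Pi^\perp_{L^2_R}u_1\|_{L^2}$ and show $\phi(R) \le C\sqrt{E_{\textup{out}}} + \text{(small)}\cdot\|\vec u_L(0)\|_{\mathcal H_R}$, then bootstrap.

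The main obstacle I expect is step (ii)–(iv): closing the estimate so that only $E_{\textup{out}}$, and not the full $\mathcal H_R$ norm of the data, appears on the right. The error $\|Vu_L\|_{L^1 L^2(\Ce_{0,R})}$ is naturally bounded by the full energy of $u_L$, and one needs to either (a) exploit decay of $V$ to make the implicit constant small for large $R$ and handle small $R$ separately using the two-dimensional kernel $\mathrm{Span}(\Lambda W,\tGamma)$, or (b) iterate the channels estimate over a sequence of cones $\{r > R + |t|\}$, $\{r > 2R+|t-\overline t|\}$, etc., à la \cite{DuKeMe20}, controlling at each stage the energy flux through the lateral boundary by the outer energy at the previous scale. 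A secondary technical point is the careful bookkeeping of the finite-dimensional projections: one must verify that subtracting off the components along $\Lambda W$ (and $\tGamma$ near $R\in(0,1)$) genuinely removes the unique non-radiative obstruction, i.e. that $\dim\ker(-\Delta+V)$ restricted to the relevant decay class is exactly as claimed, which is where the Wronskian relation \eqref{id:LambdaWGammawronskian} and the precise asymptotics are used.
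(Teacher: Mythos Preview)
Your perturbative strategy has a genuine gap at step (ii)--(iv): the closing argument cannot work for $R$ in a bounded range (say $R\in [0,R_0]$ for any fixed $R_0$). After reducing to odd solutions and subtracting the $\Lambda W$ component, you would arrive at an inequality of the form
\[
\|v_1\|_{L^2_R}\lesssim \sqrt{E_{\textup{out}}}+\|V\|_{L^2L^4(\Ce_{0,R})}\,\|v_L\|_{L^2L^4(\Ce_{0,R})}\lesssim \sqrt{E_{\textup{out}}}+\|V\|_{L^2L^4(\Ce_{0,R})}\,\|v_1\|_{L^2_R},
\]
and this closes only if $\|V\|_{L^2L^4(\Ce_{0,R})}<1$. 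That quantity is small for \emph{large} $R$ (which is precisely how the rigidity Lemma \ref{L:rigiditylinearoddintime} is proved), but it is a fixed $O(1)$ number for $R$ of order one or smaller. Adding $\tGamma$ to the projection for $0<R<1$ does not help here: the obstruction is not a missing kernel direction but the lack of a small parameter to absorb the potential term. Your option (b) of iterating over nested cones does not escape this either, since each iteration produces the same non-small factor.

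The paper's proof takes a completely different route: it argues by contradiction and compactness. Assuming a sequence with $\|\Pi^\perp_{L^2_{R_n}}u_{1,n}\|=1$ and vanishing exterior energy, one first proves a qualitative rigidity statement (Lemma \ref{L:rigiditylinearoddintime}: any odd non-radiative solution is $c\,\Lambda W$ for $r>R$) using your large-$R$ smallness idea together with support propagation. This rigidity forces the normalised sequence $v_{1,n}$ (after removing the $\Lambda W$ component) to converge \emph{weakly} to $0$ in $L^2$. The key step you are missing is then \cite[Lemma 3.7]{DuKeMe20}: weak convergence of the data to $0$ implies $\sup_t\|\vec v_{L,n}(t)-\vec v_{F,n}(t)\|_{\HHH_{|t|}}\to 0$, so the linearised and free flows become asymptotically indistinguishable in the exterior. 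One can then invoke the free channels estimate (Proposition \ref{P:CK1.4} or \eqref{free_exterior}) for $v_{F,n}$ and derive a contradiction. The three cases in the paper's proof (according to $\lim R_n$) handle the bookkeeping of which kernel directions and which scalar products must be tracked; in particular Case 3 ($R_n\to 0$) is where the extra direction $\tGamma$ enters, via the uniform bound \eqref{CO40} on the exterior energy of $\tGamma_L$.
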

The proof combines two quantitative lower-bounds, proved in \cite{DuKeMaMe21P,CoKeSc14}, namely Proposition \ref{P:CK1.4}, and that the bound \eqref{free_exterior} holds true for all solutions of \eqref{FW}, and the following rigidity result:
\begin{lemma}[Rigidity for odd in time non-radiative linear waves around a soliton] \label{L:rigiditylinearoddintime}

  Let $R\geq 0$, and $u_L$ be a solution of \eqref{LW}. Assume
  \begin{equation}\label{NonRad} \sum_{\pm}\lim_{t\to\pm\infty}\int_{|x|>R+|t|} |\nabla_{t,x}u_L(t,x)|^2dx=0.\end{equation}
  Then there exists $c \in \mathbb R$ such that $u_1(r)=c\Lambda W(r)$ for all $r\geq R$. 
  
\end{lemma}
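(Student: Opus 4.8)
The plan is to prove the rigidity statement by a self-improving / ODE-asymptotics argument, exploiting the fact that a non-radiative solution of \eqref{LW} in an exterior cone must, on an even larger region, be governed by the stationary equation $-\Delta v + Vv = 0$ with two explicit fundamental solutions $\Lambda W$ and $\Gamma$. First I would reduce to the case $t=0$ and to the single-slice statement about $u_1$: since the solution is odd in time in the relevant sense only via its contribution to $\partial_t u_L$, I would apply the channels-of-energy lower bound for the free wave equation \eqref{free_exterior} (valid in $N=6$, the "$L^2$" half, i.e.\ the bound on $u_1$) together with Proposition \ref{P:CK1.4} and the rigidity results of \cite{DuKeMaMe21P,CoKeSc14} to the difference between $u_L$ and the free evolution, absorbing the potential term $Vu_L$ as an $L^1_tL^2_r$ right-hand side on the exterior cone thanks to the decay $W\in L^4$ and the Strichartz bound of Lemma \ref{L:CK1.3}. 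The upshot of this first step is that vanishing of the exterior energy for both time directions forces $u_1^\perp$ (the component of $u_1$ orthogonal to $1/r^4$ on $\{r>R\}$) to vanish, i.e.\ $u_1(r) = c_1/r^4$ for $r>R$, for some constant $c_1$.

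Next I would upgrade this to the statement $u_1 = c\,\Lambda W$ on $\{r\ge R\}$. Here the point is that $1/r^4 = c^{-1}\Gamma(r) + O(r^{-2})$ near the origin and $\Gamma$ is a genuine (singular at $0$) solution of the stationary equation, so a priori $u_1$ could be a combination $a\Lambda W + b\Gamma$. To rule out the $\Gamma$ part I would feed the information $u_1^\perp \equiv 0$ back into the equation: a non-radiative solution of \eqref{LW} in $\Ce_{0,R}$ has $\vec u_L(0)$ in the kernel of the exterior-energy functional, and by the standard argument (run the solution to $t\to\pm\infty$, use the one-dimensional reduction of the radial wave equation in $6$d and the asymptotic freeness outside the cone) one shows $u_1$ must actually be a stationary solution on $\{r>R\}$, hence $u_1 = a\Lambda W + b\Gamma$ there. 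Combining with $u_1 = c_1/r^4$ and the asymptotics \eqref{id:asymptoticGamma}–\eqref{id:asymptoticparGamma}, matching the $O(r^{-2})$ remainder forces $a\Lambda W + b\Gamma - c_1/r^4 \equiv 0$; since $\Lambda W(r)\sim \text{const}\cdot r^{-4}$ as $r\to\infty$ while $\Gamma(r)\to c'\neq 0$, and $\Lambda W$ is bounded near $0$ while $\Gamma\sim cr^{-4}$, a dimension/asymptotics count yields $b=0$ and $u_1 = c\Lambda W$ on $\{r\ge R\}$ with $c = c_1/(\text{leading coeff of }\Lambda W)$. When $0<R<1$ one has to be slightly careful because the projection $\Pi^\perp_{L^2_R}$ quotients by both $\Lambda W$ and $\tGamma$; but the conclusion is a statement only about $u_1$ on $\{r\ge R\}$, where $\tGamma = \Gamma$ up to the cutoff supported at $r\ge 10$, so this does not affect the argument for $R$ bounded, and for $R\ge 1$ only $\Lambda W$ is quotiented out and the same matching applies.

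I would organize the write-up as: (i) invoke Proposition \ref{P:CK1.4} with $f = -Vu_L$ on $\Ce_{0,R}$, noting $\|Vu_L\|_{L^1_tL^2_r(\Ce_{0,R})}\lesssim \|W^{4/(N-2)}\|_{L^{N/2}}\|u_L\|_{L^2_tL^4_r(\Ce_{0,R})}<\infty$ by Hölder and Lemma \ref{L:CK1.3}, and symmetrically for $t\to -\infty$, to get $u_1^\perp = 0$; (ii) then use the free-wave exterior-energy rigidity (the $N=6$ case of \cite{CoKeSc14}, extended to cones $\{r>R+|t|\}$ as in \cite{DuKeMaMe21P}) applied to the free part, to conclude $u_1$ solves the stationary equation on $\{r>R\}$; (iii) solve the stationary ODE explicitly using \eqref{id:LambdaWGammazeroes}–\eqref{id:asymptoticparGamma} and match constants to kill the $\Gamma$ component. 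I expect the main obstacle to be step (ii): passing from "no radiated energy for $u_L$" to "$u_1$ is stationary on the exterior" is not purely the free-wave statement, because the potential couples $u_0$ and $u_1$; one must argue that the odd-in-time part (which is what controls $\partial_t u_L$, hence $u_1$) decouples cleanly, or instead run a direct ODE/Emden–Fowler analysis of the profile using the fact that a radial $6$d non-radiative wave has a profile with a two-term asymptotic expansion (cf.\ the profile decomposition used in \cite{DuKeMe20} and \cite{DuKeMaMe21P}), and that here, since $u_1^\perp=0$, the free profile is exactly $c_1/r^4$, which the potential correction then promotes to the genuine stationary solution $c\Lambda W$. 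Getting this decoupling and the matching of the singular terms at $r=0$ consistent for all $R\ge 0$ (including the delicate regime $R\downarrow 0$, where $\Gamma$ genuinely enters the kernel) is where the care is needed.
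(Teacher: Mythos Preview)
There is a genuine gap in your step (i). Applying Proposition~\ref{P:CK1.4} on $\Ce_{0,R}$ with $f=-Vu_L$ gives
\[
\|u_1^\perp\|_{L^2_R}\lesssim \lim_{t\to\infty}\|\vec u_L(t)\|_{\HHH_{R+|t|}}+\|Vu_L\|_{L^1L^2(\Ce_{0,R})}
=0+\|Vu_L\|_{L^1L^2(\Ce_{0,R})},
\]
and the last term is \emph{finite but not zero}. So you cannot conclude $u_1^\perp=0$, and the rest of the plan (your steps (ii)--(iii), where you try to upgrade $u_1=c_1/r^4$ to $u_1=c\Lambda W$ by ruling out a $\Gamma$ component) never gets off the ground. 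Your own acknowledged ``main obstacle'' in step~(ii) is in fact downstream of this earlier, fatal, issue.

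The missing idea is an absorption argument at a \emph{large} radius $R'$, combined with subtraction of the correct profile. One first chooses $c$ so that $u_1-c\Lambda W$ is orthogonal to $r^{-4}$ on $\{r>R'\}$ (note: subtract $\Lambda W$, not $r^{-4}$), observes that the solution with data $(0,u_1-c\Lambda W)$ is $u_L-ct\Lambda W$, and applies Proposition~\ref{P:CK1.4} together with the Strichartz bound of Lemma~\ref{L:CK1.3} on $\Ce_{0,R'}$. This yields
\[
\|u_1-c\Lambda W\|_{L^2_{R'}}+\|u_L-ct\Lambda W\|_{L^2L^4(\Ce_{0,R'})}
\lesssim \|V\|_{L^2L^4(\Ce_{0,R'})}\,\|u_L-ct\Lambda W\|_{L^2L^4(\Ce_{0,R'})},
\]
and since $\|V\|_{L^2L^4(r>R'+|t|)}\to 0$ as $R'\to\infty$, for $R'$ large the right-hand side is absorbed, giving $u_1=c\Lambda W$ on $\{r>R'\}$ directly---no separate $\Gamma$-elimination is needed. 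Passing from $\{r>R'\}$ to $\{r\ge R\}$ then requires a propagation-of-support statement for \eqref{LW} (Proposition~4.7 of \cite{DuKeMe20}): $u_L-ct\Lambda W$ solves \eqref{LW}, is non-radiative on $\Ce_{0,R}$, and has data supported in $\{r\le R'\}$, hence vanishes on $\Ce_{0,R}$.
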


\begin{proof}

Up to replacing $u_L$ by $\frac{1}{2}(u_L(t)-u_L(-t))$, we can assume that $u_0=0$. We fix a large $R'>0$ and choose $c\in \mathbb R$ such that 
$$ \int_{R'}^{\infty} (u_1-c\Lambda W)\frac{1}{r^4} r^5dr=0.$$
The solution of \eqref{LW} with initial data $(0,u_1-c\Lambda W)$ is $u_L(t)-ct\Lambda W$. By Proposition \ref{P:CK1.4},
$$ \left\|u_1-c\Lambda W\right\|_{L^2_{R'}}\lesssim \left\| V(u_L-ct\Lambda W)\right\|_{L^1L^2(r>R'+|t|)}.$$
Furthermore, by Strichartz estimates,
$$ \|u_L-ct\Lambda W\|_{L^2L^4(r>R'+|t|)}\lesssim \left\| V(u_L-ct\Lambda W)\right\|_{L^1L^2(r>R'+|t|)}+\|u_1-c\Lambda W\|_{L^2_{R'}}.$$
Combining, we obtain
\begin{multline}
\label{UL_LambdaW}
\|u_L-ct\Lambda W\|_{L^2L^4(r>R'+|t|)}\lesssim \left\| V(u_L-ct\Lambda W)\right\|_{L^1L^2(r>R'+|t|)}\\
\lesssim \| V\|_{L^2L^4(r>R'+|t|)}\left\|(u_L-ct\Lambda W)\right\|_{L^2L^4(r>R'+|t|)}.
\end{multline}
Since $V=-2W\in L^2_tL^4_x(r\geq |t|)$, we have 
$$ \lim_{R'\to\infty} \|V\|_{L^2L^4(r>R'+|t|)}=0,$$
and thus \eqref{UL_LambdaW} implies, chosing $R'$ large enough,
$$u_L(t,r)=ct\Lambda W(r),\quad r>R'+|t|.$$
As a consequence of \eqref{NonRad} and the propagation of the support for solutions of \eqref{LW} with compactly supported initial data (see \cite[Proposition 4.7]{DuKeMe20}), we obtain that $u_1(r)=c\Lambda W(r)$ for all $r\geq R$.
\end{proof}

We can now give the proof of Lemma \ref{L:channelsoddunsoliton}.
\begin{proof}[Proof of Lemma \ref{L:channelsoddunsoliton}]
We reason by contradiction, and assume that there exist sequences $(R_n)_n$ of radii and $(u_{L,n})_n$ of solutions of \eqref{LW},
with initial data $(0,u_{1,n})$ (up to replacing $u_{L,n}$ by $\frac 12 (u_{L,n}(t)-u_{L,n}(-t))$) such that $u_{1,n}(r)=0$ for $r\leq R_n$, and that
\begin{equation}
 \label{CO10}
\forall n\in \mathbb N, \ \left\| \Pi_{L^2_{R_n}}^{\bot}u_{1,n}\right\|_{L^2_{R_n}}=1, \,\mbox{and}\;\lim_{n\to\infty} \left( \lim_{t\to\infty}\int_{|x|>R_n+|t|} |\nabla_{t,x}u_{L,n}(t,x)|^2dx \right)=0.
\end{equation} 
We decompose $u_{1,n}$ as follows.
\begin{equation}
 \label{CO11}
 u_{1,n}=c_n\Lambda W+v_{1,n},\; r>R_n\quad \text{where} \int_{|x|>R_n}v_{1,n}\Lambda W=0.
\end{equation}
We extend $v_{1,n}$ by $v_{1,n}(r)=0$ for $0<r<R_n$. We let $v_{L,n}$ be the solution of \eqref{LW} with initial data $(0,v_{1,n})$, and note that $v_{L,n}(t,r)=u_{L,n}(t,r)-c_nt\Lambda W$ for $r>R_n+|t|$.

Extracting subsequences, we can assume that we are in one of the three following cases:
\begin{align*}
 \mbox{Case 1:} \quad& \forall n,\;R_n=0\quad \text{or}\quad \forall n,\;R_n\geq 1\\
\mbox{Case 2:} \quad& \forall n,\;R_n\in (0,1)\quad \text{and}\quad\lim_{n\to\infty} R_n=R_{\infty}\in (0,1]\\
\mbox{Case 3:} \quad& \forall n,\,R_n\in (0,1) \quad \text{and}\quad \lim_{n\to\infty} R_n=0.
\end{align*}

\medskip

\noindent\textbf{Case 1.} We assume $R_n=0$ or for all $n$, $R_n\geq 1$. Extracting subsequences, we can also assume $\lim_{n\to\infty} R_n=R_{\infty}\in \{0\}\cup [1,\infty]$. We note that in this case, $v_{1,n}=\Pi^{\bot}_{L^2_{R_n}}u_{1,n}$, so that $\|v_{1,n}\|_{L^2_{R_n}}=1$ by \eqref{CO10}. We first claim
\begin{equation}
\label{CO12}
 v_{1,n}\xrightharpoonup[n\to\infty]{} 0\text{ in }L^2(\RR^6).
\end{equation} 
If $R_{\infty}=\infty$, this is obvious since $v_{1,n}(r)=0$ for $r\in (0,R_{n})$. If not, we introduce (after extraction) the weak limit $v_1$ of $v_{1,n}$ in $L^2$. We will prove that $v_1=0$. By weak convergence and the orthogonality condition in \eqref{CO11}, we have 
\begin{equation}
 \label{CO20}
 \int_{|x|>R_{\infty}} v_1\Lambda W=0.
\end{equation} 
We denote by $v_L$ the solution of \eqref{LW} with initial data $(0,v_1)$. Fix now any $R>R_\infty$, and define $v_{1,n}^R=\indic_{\{r\geq R\}}v_{1,n}$ and $v_{1}^R=\indic_{\{r\geq R\}}v_{1}$, and $v_{L,n}^R$ and $v_L^R$ the solutions to \eqref{LW} with initial data $(0,v_{1,n}^R)$ and $(0,v_{1}^R)$ respectively. Then, $v_{L,n}^R$ coincides with $u_{L,n}^R-c_nt\Lambda W$ for $r\geq R+|t|$ by finite speed of propagation, so that using \eqref{CO10}:
\begin{equation} \label{bd:interproofchannel1}
\lim_{n\to \infty} \lim_{t\to \infty} \int_{r\geq R+|t|} |\nabla v_{L,n}^R|^2+(\pa_tv_{L,n}^R)^2=0.
\end{equation}
We have that $v_1^R$ is the weak limit of $(v_{1,n}^R)_n$, so that $v_1^R$ belongs to the closure in $L^2$ of the convex hull of $\{v_{1,n}^R\}_{n\geq N}$ for any $N$. Hence, using energy estimates, $v_{L}^R $ belongs to the closure in $L^{\infty}_t (\mathcal H_{r\geq |t|})$ of the convex hull of $\{ v_{L,n}^R\}_{n\geq N}$ for any $N$. This fact and the estimate \eqref{bd:interproofchannel1} then imply:
\begin{equation*}
 \lim_{t\to \infty} \int_{r\geq R+|t|} |\nabla v_{L}^R|^2+(\pa_tv_{L}^R)^2=0.
\end{equation*}
Thus, Lemma \ref{L:rigiditylinearoddintime} implies that $v_1^R(r)=c_R\Lambda W(r)$ for all $r\geq R$ for some $c_R\in \mathbb R$. Coming back to the definition of $v_1^R$, we see that $c_R=c$ is independent of $R>R_\infty$ so that $v_1(r)=c\Lambda W(r)$ for $r>R_\infty$, and that $c=0$ using \eqref{CO20}. Hence \eqref{CO12}.

Next, we let $v_{F,n}$ be the solution of the free wave equation $(\partial_t^2-\Delta)v_{F,n}=0$ with initial data $(0,v_{1,n})$. By \eqref{CO12} and \cite[Lemma 3.7]{DuKeMe20}, we have 
$$\sup_{t\in \RR} \left\| \vec{v}_{F,n}(t)-\vec{v}_{L,n}(t)\right\|_{\HHH_{|t|}}\underset{n\to\infty}{\longrightarrow}0.$$
Thus by \eqref{CO10} and \eqref{CO11},
\begin{equation}
 \label{CO21}
 \lim_{n\to\infty}\lim_{t\to\infty} \int_{|x|>R_n+|t|} \left|\nabla_{t,x}v_{F,n}(t,x)\right|^2dx=0.
\end{equation} 
On the other hand, by the result \eqref{free_exterior} proved in \cite{CoKeSc14} if $R_n=0$ or Proposition \ref{P:CK1.4} if $R_n>0$, we have
$$\lim_{t\to\infty} \int_{|x|>R_n+|t|} |\nabla_{t,x}v_{F,n}(t,x)|^2dx\gtrsim
\begin{cases}
\|v_{1,n}\|^2_{L^2_{R_n}}&\text{ if }R_n=0\\
\|v_{1,n}\|^2_{L^2_{R_n}}-\left(\left\|\frac{1}{r^4}\right\|_{L^2_{R_n}}^{-1} \int_{|x|>R_n}\frac{1}{r^4}v_{1,n}(x)dx\right)^2&\text{ if }R_n>0.
\end{cases}
$$
Recall that $\|v_{1,n}\|^2_{L^2_{R_n}}=1$ for all $n$ by \eqref{CO10}.
In the case where $R_n=0$ for all $n$, we see that it contradicts \eqref{CO21}. When $R_n>0$ for all $n$, since $v_{1,n}$ converges weakly to $0$ in $L^2$, and $\lim_{n\to\infty} R_n>0$, we obtain 
$$ \lim_{n\to\infty}\left\|\frac{1}{r^4}\right\|_{L^2_{R_n}}^{-1} \int_{|x|>R_n}\frac{1}{r^4}v_{1,n}(x)dx=0,$$
and the contradiction follows in this case also.

\medskip

\textbf{Case 2.} We now assume that $R_n\in (0,1)$ for all $n$, and that $\lim R_n=R_{\infty}\in (0,1]$. In this case, by the equality $u_{1,n}=c_n\Lambda W+v_{1,n}$, we have $\Pi^{\bot}_{L^2_{R_n}}u_{1,n}=\Pi^{\bot}_{L^2_{R_n}}v_{1,n}$, and thus, by \eqref{CO10}, $\|v_{1,n}\|_{L^2_{R_n}}\geq 1$. Letting 
$$\tilde{u}_{1,n}=\frac{1}{\|v_{1,n}\|_{L^2_{R_n}}}u_{1,n}, \quad \tilde{v}_{1,n}=\frac{1}{\|v_{1,n}\|_{L^2_{R_n}}}v_{1,n},$$
and $\tilde{u}_{L,n}$ the solution of \eqref{LW} with initial data $(0,\tilde{u}_{1,n})$, we see that $\tilde{u}_{1,n}=\tilde{c}_n\Lambda W+\tilde{v}_{1,n}$, $\int_{|x|>R_n} \tilde{v}_{1,n}\Lambda W=0$, $\|\tilde{v}_{1,n}\|_{L^2_{R_n}}=1$, $\lim_{n\to\infty} R_n=R_{\infty}>0$ and 
$$ \lim_{n\to\infty}\lim_{t\to\infty}\int_{|x|>R_n+|t|} |\nabla_{t,x}\tilde{u}_{L,n}(t,x)|^2dx=0.$$
With exactly the same proof as in Case 1, replacing $u_{L,n}$ by $\tilde{u}_{L,n}$, one obtains a contradiction.

\medskip

\textbf{Case 3.} We assume that $R_n\in (0,1]$ for all $n$ and that $\lim_{n\to\infty} R_n=0$. 

\noindent{\textbf{Step 1.}}

We let $\widetilde{\Gamma}_L$ be the solution of \eqref{LW} with initial data $(0,\widetilde{\Gamma})$. Since $\widetilde{\Gamma}$ is in $L^2_R$ for all $R>0$, this solution is well-defined (by finite speed of propagation and since \eqref{LW} is well-posed in $\HHH$) for $r>|t|+R$, and has finite energy there when $R>0$ is fixed. We claim that there exists a constant $C>0$ such that for all $R\in (0,1)$,

\begin{equation}
 \label{CO40}
 \frac{1}{C}\leq \lim_{t\to+\infty} \int_{|x|>R+|t|}|\nabla_{t,x}\widetilde{\Gamma}_L(t,x)|^2dx\leq C.
\end{equation} 
Indeed, $\widetilde{\Gamma}\neq \Lambda W$, thus by Lemma \ref{L:rigiditylinearoddintime},
$$\lim_{t\to+\infty} \int_{|x|>1+|t|}|\nabla_{t,x}\widetilde{\Gamma}_L(t,x)|^2dx>0,$$
and the left-hand side inequality in \eqref{CO40} follows. Furthermore, denoting by $\Xi_L=\widetilde{\Gamma_L}-t\widetilde{\Gamma}$, we have

\begin{equation} \label{id:defXiL}
 \left\{
 \begin{aligned}
 (\partial_t^2-\Delta +V) \Xi_L&=t(-\Delta +V)\widetilde{\Gamma}\\
 \vec{\Xi}_L&=(0,0),
 \end{aligned}
 \right.
\end{equation} 

and thus, since $(-\Delta+V)\widetilde{\Gamma}=0$ for $r\leq 10$ and for $r\geq 12$ from \eqref{id:deftildeGamma}, we see by simple energy estimates that 
$$\lim_{t\to\infty} \int_{|x|>|t|} |\nabla_{t,x}\Xi_L(t,x)|^2dx$$
is finite, which yields the right-hand side inequality in \eqref{CO40}.

\noindent \textbf{Step 2.}

Next, we expand $u_{1,n}$ as follows:
$$ u_{1,n}=d_n\Lambda W+\tilde{d}_n\widetilde{\Gamma} +\Pi^{\bot}_{L^2_{R_n}}u_{1,n}, \quad r>R_n$$
and by \eqref{CO10} and the lower bound in \eqref{CO40}, we deduce that  $\tilde{d}_n$ is bounded. Using the definition \eqref{CO11} of $v_{1,n}$, we obtain 
\begin{equation}
\label{CO41}
(c_n-d_n)\Lambda W=\tilde{d}_n\widetilde{\Gamma}+\Pi^{\bot}_{L^2_{R_n}}u_{1,n}-v_{1,n}, 
\end{equation} 
and thus, taking the scalar product with $\Lambda W$, $(c_n-d_n)\int_{|x|>R_n}|\Lambda W|^2=\tilde{d}_n\int_{R_n}\Lambda W\widetilde{\Gamma}$. Since $\Lambda W \widetilde{\Gamma}\in L^1(\RR^6)$, we deduce that $c_n-d_n$ is also bounded. As a consequence, and since $\widetilde{\Gamma}\in L^2\left(r^{7.5}\langle r \rangle^{-2.5}dr\right)$, we obtain that $v_{1,n}$ is bounded in $L^2\left(r^{7.5}\langle r\rangle^{-2.5}dr\right)$. With the same argument as in Case 1, using Lemma \ref{L:rigiditylinearoddintime}, we see that $v_{1,n}$ converges weakly to $0$ in $L^2_R$ for all $R>0$, and thus 
$$ v_{1,n}\xrightharpoonup[n\to\infty]{} 0,\quad \text{weakly in }L^2\left( r^{7.5}\langle r\rangle^{-2.5}dr \right).$$
 
\noindent\textbf{Step 3.}

Next, we decompose $v_{1,n}$ as follows for $r>R_n$
$$ v_{1,n}=\tilde{c}_n\widetilde{\Gamma} +w_{1,n},\quad \int_{|x|>R_n} w_{1,n}\widetilde{\Gamma}=0,$$
and let $w_{1,n}(r)=0$ for $r\in (0,R_n]$. We claim 
\begin{equation}
 \label{CO50}
 \lim_{n\to\infty}\tilde{c}_n=0,\quad w_{1,n}\xrightharpoonup[n\to\infty]{}0\text{ in }L^2.
\end{equation} 
Indeed, \eqref{CO41} implies $(\tilde{c}_n-\tilde{d}_n)\widetilde{\Gamma}+w_{1,n}=(d_n-c_n)\Lambda W+\Pi^{\bot}_{L^2_{R_n}} u_{1,n}$. Thus 
 \begin{equation}
 \label{CO60}
 \|w_{1,n}\|^2_{L^2_{R_n}}+\left( \tilde{c}_n-\tilde{d}_n \right)^2\left\|\widetilde{\Gamma}\right\|^2_{L^2_{R_n}}=\left\|\Pi^{\bot}_{L^2_{R_n}}u_{1,n}\right\|^2_{L^2_{R_n}}+(c_n-d_n)^2\left\|\Lambda W\right\|^2_{L^2_{R_n}}. 
 \end{equation} 
Since the right-hand side of \eqref{CO60} is bounded, we obtain that its left-hand side is also bounded and thus that $w_{1,n}$ is bounded in $L^2$, and that $\tilde{c}_n-\tilde{d}_n$ is bounded. Since we have proved above that $\tilde{d}_n$ is bounded, we obtain that $\tilde{c}_n$ is bounded. Extracting subsequences, we can assume
$$\lim_{n}\tilde{c}_n=\tilde{c},\quad w_{1,n}\xrightharpoonup[n\to\infty]{} w_1\text{ in }L^2.$$
Since $v_{1,n}\xrightharpoonup[n\to\infty]{}0$ in $L^2( r^{7.5}\langle r\rangle^{-2.5}dr)$, we obtain $0=\tilde{c}\widetilde{\Gamma}+w_1$, and thus, since $w_1\in L^2$ and $\widetilde{\Gamma}\notin L^2$, $\tilde{c}=0$ and $w_1=0$. Hence \eqref{CO50}.

\noindent\textbf{Step 4.}

Let $w_{L,n}$ be the solution of \eqref{LW} with initial data $(0,w_{1,n})$. From $v_{1,n}=\tilde{c}_n\widetilde{\Gamma}+w_{1,n}$, \eqref{CO40} and \eqref{CO50}, we obtain 
\begin{equation}
 \label{CO61}
 \lim_{n\to\infty}\left( \lim_{t\to\infty}\int_{|x|>R_n+|t|}|\nabla_{t,x}w_{L,n}(t,x)|^2dx \right)=0.
\end{equation} 
Let $w_{F,n}$ be the solution of the free wave equation with initial data $(0,w_{1,n})$. Since $w_{1,n}\xrightharpoonup{} 0$ in $L^2$, we have 
$$ \lim_{n\to\infty} \sup_{t\in \RR} \left\|w_{L,n}(t)-w_{F,n}(t)\right\|_{\HHH_{|t|}}=0.$$
By \eqref{CO61},
\begin{equation}
 \label{CO70}
 \lim_{n\to\infty}\left( \lim_{t\to\infty}\int_{|x|>R_n+|t|}|\nabla_{t,x}w_{F,n}(t,x)|^2dx \right)=0.
\end{equation} 
Furthermore 
\begin{equation} \label{bd:interproofchannel3}
 \| \frac{1}{r^4}\|_{L^2_{R_n}}^{-1} \int_{r\geq R_n} \pa_t w_{F,n}(0)\frac{1}{r^4}r^5 dr \rightarrow 0.
 \end{equation}
Indeed,
$$
\left| \int_{r\geq R_n} \pa_t w_{F,n}(0)\frac{1}{r^4}r^5 dr\right| = \left| \int_{r\geq R_n} \pa_t w_{F,n}(0)(\frac{1}{r^4}-\frac{1}{c}\tGamma)r^5 dr \right|\leq C\| \frac{1}{r^4}-\frac{1}{c}\tGamma \|_{L^2_{R_n}},
$$
and hence \eqref{bd:interproofchannel3} using \eqref{id:asymptoticGamma}. By \eqref{CO70}, \eqref{bd:interproofchannel3} and Proposition \ref{P:CK1.4}, we have 
$$\lim_{n\to\infty}\|w_{1,n}\|_{L^2_{R_n}}=0.$$
However, 
$$ \|w_{1,n}\|_{L^2_{R_n}}\geq \|\Pi^{\bot}_{L^2_{R_n}}w_{1,n}\|_{L^2_{R_n}}=\|\Pi^{\bot}_{L^2_{R_n}}u_{1,n}\|_{L^2_{R_n}}=1,$$
which gives a contradiction.
\end{proof}

\subsection{Channels of energy around the ground state for even in time solutions}

We prove in this Subsection:

\begin{lemma} \label{L:channelsevenunsoliton}

There exists $C>0$ such that the following holds true. Let $u_L$ be a radial solution of \eqref{LW}. Then:
$$
\| \nabla \Pi_{\dot H^1}^\perp  u_0 \|_{Z_{-3}}^2\leq C \sum_{\pm } \lim_{t\rightarrow \pm \infty}  \int_{r\geq |t|} |\nabla_{t,x}u_L(t,x)|^2dx.
$$

\end{lemma}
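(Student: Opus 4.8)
The plan is to reduce the estimate on $u_0$ for the even-in-time part to the estimate on $\partial_t u$ for an associated odd-in-time solution, via a change of variables in the time variable, exactly paralleling the structure used in \cite{DuKeMe20}. First I would reduce to the case $u_1 = 0$ by replacing $u_L$ with $\frac12(u_L(t)+u_L(-t))$, which only decreases the right-hand side. So let $u_L$ solve \eqref{LW} with data $(u_0,0)$. The heuristic is that an even-in-time solution behaves, for the purpose of outgoing radiation, like $t$ times an odd-in-time solution whose "initial velocity" is essentially $\partial_r u_0 / r$ (or a comparable first-order object). Concretely, I would look for a companion solution $w_L$ of \eqref{LW}, odd in time, constructed so that $w_L(t) - t \cdot (\text{something built from } u_0)$ has zero data, and so that the outer energy of $w_L$ is controlled by that of $u_L$; then apply Lemma \ref{L:channelsoddunsoliton} (with $R = 0$) to $w_L$ to get an $L^2$ bound on its "velocity", which unravels to the $Z_{-3}$ bound on $\nabla u_0$.

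More precisely, the key algebraic step: if $u_L$ has data $(u_0,0)$ and is even in time, then differentiating the equation and exploiting the structure of $-\Delta + V$, one expects a solution $v_L$, odd in time, with $v_1$ (its initial velocity) related to $u_0$ through the second-order ODE $-\Delta v_1 + V v_1 = $ (derivative-of-$u_0$ data), whose radial fundamental system is spanned by $\Lambda W$ and $\Gamma$. The asymptotics \eqref{id:asymptoticGamma}, \eqref{id:asymptoticparGamma} — namely $\Gamma \sim c r^{-4}$ at $0$, $\Gamma \sim c'$ at $\infty$, with $r^4$ being precisely the homogeneity that $Z_{-3}$ on $\nabla u_0$ (equivalently $Z_{-2}$ on $u_0$) sees — are what make the $Z_{-3}$ norm (with its logarithmic correction, coming from resonance between the $r^{-4}$ decay and $L^2$ scaling in $6$d) the natural output. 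After subtracting the correct multiple of $\Lambda W$ (this is the source of the projection $\Pi^\perp_{\dot H^1}$), the rigidity Lemma \ref{L:rigiditylinearoddintime} and the channel bound Lemma \ref{L:channelsoddunsoliton} pin down $v_1$ in $L^2_R$ for every dyadic $R$, and summing these dyadic estimates against the weight $R^{-3-(-3)}/\langle\log R\rangle = \langle \log R\rangle^{-1}$ is exactly the definition of $\|\cdot\|_{Z_{-3}}$; the supremum over $R$ with the logarithmic denominator is what one gets from the fact that each dyadic annulus contributes an $O(1)$ amount and there are $O(\log R)$ of them, so one must divide by $\langle\log R\rangle$ to keep things bounded. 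I would run this dyadically: for each $R>0$, restrict to $r>R$, peel off $\Lambda W$ and (near the origin) $\tGamma$ to match the projection $\Pi^\perp_{L^2_R}$ in \eqref{id:defpiL2R1}, apply Lemma \ref{L:channelsoddunsoliton} on the exterior cone $\{r \geq R + |t|\}$, and then sum/optimize over the dyadic scales against the right-hand side $\sum_\pm \lim_t \int_{r \geq |t|} |\nabla_{t,x} u_L|^2$, using finite speed of propagation to see that the exterior energy at radius $R$ is bounded by the full exterior energy.

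The main obstacle will be the construction and control of the companion odd-in-time solution $v_L$: making precise the map $u_0 \mapsto v_1$ so that (a) $v_L$ genuinely solves \eqref{LW} (not merely an equation with an acceptable error), or if there is an error term, that it lies in $L^1_t L^2_r$ on the relevant cones with the right smallness — this is where the potential $V = -2W \in L^2_t L^4_x(r \geq |t|)$ and the Strichartz machinery of Lemma \ref{L:CK1.3} and Proposition \ref{P:CK1.4} must be deployed; and (b) the outer energy of $v_L$ is bounded by that of $u_L$, including handling the $t\,\tGamma$-type growing terms (as in $\Xi_L = \tGamma_L - t\tGamma$ in \eqref{id:defXiL}) whose contribution must be shown finite by energy estimates since $(-\Delta+V)\tGamma$ is compactly supported in $r$. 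I expect this to be split across Lemmas \ref{L:estimateufromutunsoliton} and \ref{lem:estimateufromutunsoliton2} as announced in the introduction, with the first establishing a generalized exterior estimate on cones and the second converting it to the global $Z_{-3}$ statement; the bookkeeping of which multiple of $\Lambda W$ (and $\tGamma$) to subtract at each scale $R$, and checking these are compatible so that a single projection $\Pi^\perp_{\dot H^1}$ works globally, is the delicate part, directly analogous to the "$c_R = c$ is independent of $R$" step in the proof of Lemma \ref{L:channelsoddunsoliton} above.
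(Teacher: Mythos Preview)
Your proposal has the right high-level architecture (reduce to even-in-time, use Lemma \ref{L:channelsoddunsoliton} as the key input, run dyadically, peel off $\Lambda W$), but the core mechanism you describe --- constructing a \emph{companion odd-in-time solution} $v_L$ with data $(0,v_1)$ built from $u_0$ via some second-order ODE --- is not what the paper does, and as you present it the construction is not carried out. You never specify the map $u_0\mapsto v_1$, and the natural candidate $v_1=(\Delta-V)u_0$ is only in $H^{-1}$, so Lemma \ref{L:channelsoddunsoliton} would not apply.

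The paper's route is more direct and avoids this obstruction entirely. There is no companion solution: instead one observes that Lemma \ref{L:channelsoddunsoliton}, applied to $u_L$ with time origin shifted to an arbitrary $\bar t$ and radius $\rho>|\bar t|$, controls $\|\Pi^\perp_{L^2_\rho}\partial_t u_L(\bar t)\|_{L^2_\rho}$ by $\sqrt{E_{\textup{out}}}$; this gives control of the quantity $\|\partial_t u_L\|_{Y_R}$. The real work, which your sketch does not anticipate, is then in two pieces. First (Lemma \ref{L:estimateufromutunsoliton}): integrate the equation twice in time on each dyadic scale $R_k$ to obtain an \emph{elliptic} equation $(-\Delta+V)u_0=c_{k}\Lambda W+\tilde c_{k}\Gamma+\text{(errors in }Z_{-4,R}\text{)}$, estimate the drift of the constants $c_k,\tilde c_k$ across scales, and invoke an explicit elliptic lemma (Lemma \ref{L:ellipticunsoliton}) solving $(-\Delta+V)\tilde u_0=f+\partial_r g$ modulo $\mathrm{Span}(\Lambda W,\Upsilon)$ in $Z_{-2,R}$. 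Here $\Upsilon$, the generalized kernel element with $(-\Delta+V)\Upsilon=\Lambda W$ and $\Upsilon\sim c_0 r^{-4}$ at $0$, $\Upsilon\sim c_\infty r^{-2}$ at $\infty$, plays the role you assigned to $\tGamma$ but on the even side; it absorbs the accumulated constant $c_{k_1}$ and is the reason the distance in Lemma \ref{L:channelsevenunsoliton2} is taken to $\mathrm{Span}(\partial_r\Lambda W,\partial_r\Upsilon)$. Second (Lemma \ref{lem:estimateufromutunsoliton2}): upgrade the resulting $Z_{-2}$ bound on $u_0$ to a $Z_{-3}$ bound on $\partial_r u_0$ via a localized energy identity \eqref{id:L2toH1logenergyidentity} on dyadic space-time boxes, using the mean-value theorem to select a good time slice. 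Finally, Lemma \ref{L:channelsevenunsoliton} follows by taking $R_n\downarrow 0$ in Lemma \ref{L:channelsevenunsoliton2} and observing that the $\Upsilon$-coefficient is forced to vanish in the limit because $\partial_r\Upsilon\notin Z_{-3}$ near $0$. Your ``$c_R=c$ is independent of $R$'' intuition is correct in spirit, but the elliptic step and the role of $\Upsilon$ are the missing ideas.
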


Lemma \ref{L:channelsevenunsoliton} is a consequence of a more general estimate for the set $\{r\geq R+|t|\}$ for any $R>0$ stated in Lemma \ref{L:channelsevenunsoliton2} below. We introduce an element of the generalized kernel of $-\Delta+V$.

\begin{lemma}

There exists a solution $\Upsilon$ to $(-\Delta +V)\Upsilon =\Lambda W$ such that, for two constants $c_0\neq 0$ and $c_\infty \neq 0$:
\begin{gather} \label{bd:Upsilon}
\Upsilon(r)\underset{r\to 0}{=} c_0r^{-4}+O(r^{-2}) , \qquad \Upsilon(r)\underset{r\to \infty}{=} c_\infty r^{-2}+O(r^{-4}\log r)\\
\label{bd:Upsilon'}
\frac{\partial \Upsilon}{\partial r}(r)\underset{r\to 0}{=} -4c_0r^{-5}+O(r^{-3}) , \qquad \frac{\partial \Upsilon}{\partial r}(r)\underset{r\to \infty}{=} -2c_\infty r^{-3}+O(r^{-5}\log r).
\end{gather}

\end{lemma}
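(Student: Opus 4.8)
The plan is to construct $\Upsilon$ explicitly by variation of parameters, using $\{\Lambda W,\Gamma\}$ as a fundamental system for the ordinary differential operator $-\Delta+V$ together with the Wronskian relation \eqref{id:LambdaWGammawronskian}. Since $\Lambda W\,\partial_r\Gamma-\Gamma\,\partial_r\Lambda W=-r^{-5}$ by \eqref{id:LambdaWGammawronskian}, I would set
\begin{equation*}
\Upsilon(r)=-\Lambda W(r)\int_1^r s^5\,\Lambda W(s)\,\Gamma(s)\,ds-\Gamma(r)\int_r^{\infty} s^5\,\Lambda W(s)^2\,ds .
\end{equation*}
The only point requiring care at this stage is the choice of integration limits. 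The second integral converges at $+\infty$ precisely because $\Lambda W\in L^2(\mathbb R^6)$ — here $N=6$, so $s^5\Lambda W(s)^2\sim C_\infty^2 s^{-3}$ at infinity, where $C_\infty\neq0$ is the leading coefficient in the expansion $\Lambda W(r)=C_\infty r^{-4}+O(r^{-6})$ — and it is the choice of lower limit $+\infty$ (rather than a finite base point) that prevents $\Upsilon$ from acquiring a nonzero constant at infinity, i.e. a $\Gamma$-component, since $\Gamma(r)\to c'$ as $r\to\infty$ by \eqref{id:asymptoticGamma}. The first integrand grows like $s$ at infinity, since $s^5\Lambda W(s)\Gamma(s)\sim c'C_\infty s$, so it must be integrated from a finite base point; as one checks, the resulting ambiguity affects only lower-order terms in all the expansions below. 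A direct differentiation, in which the boundary terms cancel thanks to \eqref{id:LambdaWGammawronskian} and which uses $-\Delta\Lambda W+V\Lambda W=-\Delta\Gamma+V\Gamma=0$ from \eqref{id:LambdaWGammazeroes}, yields $(-\Delta+V)\Upsilon=\Lambda W$ together with the convenient formula
\begin{equation*}
\partial_r\Upsilon(r)=-\partial_r\Lambda W(r)\int_1^r s^5\,\Lambda W(s)\,\Gamma(s)\,ds-\partial_r\Gamma(r)\int_r^{\infty} s^5\,\Lambda W(s)^2\,ds .
\end{equation*}

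It then remains to read off \eqref{bd:Upsilon} and \eqref{bd:Upsilon'} by inserting \eqref{id:asymptoticGamma}, \eqref{id:asymptoticparGamma} and the elementary expansions of $\Lambda W$ and $\partial_r\Lambda W$ (which follow from the explicit formula for $W$). As $r\to0$ one has $\int_r^{\infty}s^5\Lambda W^2\,ds\to I:=\int_0^{\infty}s^5\Lambda W(s)^2\,ds\in(0,\infty)$, whereas the first term stays bounded, so $\Upsilon$ inherits the $r^{-4}$ singularity of $\Gamma$ with coefficient $c_0=-cI\neq0$ and remainder $O(r^{-2})$; differentiating gives $\partial_r\Upsilon=-4c_0r^{-5}+O(r^{-3})$ in the same way. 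As $r\to+\infty$ the first term is $\Lambda W(r)\cdot\big(\tfrac{c'C_\infty}{2}r^2+O(\log r)\big)=O(r^{-2})$ and the second is $-\Gamma(r)\cdot\big(\tfrac{C_\infty^2}{2}r^{-2}+O(r^{-4})\big)=O(r^{-2})$, both of exact order $r^{-2}$; rather than bookkeeping the precise constant I would pin down $c_\infty$ by a soft argument: $\tfrac{C_\infty}{4}r^{-2}$ solves the equation at leading order near infinity, namely $(-\Delta+V)(\tfrac{C_\infty}{4}r^{-2})-\Lambda W=O(r^{-6})$, and since $\Upsilon$ carries no constant ($\Gamma$-)component at infinity by the choice of limits above, this forces $c_\infty=\tfrac{C_\infty}{4}\neq0$. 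The logarithm in the remainder of \eqref{bd:Upsilon} reflects the six-dimensional resonance $\Delta(r^{-4}\log r)=-4r^{-6}$ — equivalently the $s^{-1}$ term in the expansion of $s^5\Lambda W(s)\Gamma(s)$ — which produces exactly one logarithm multiplied by $\Lambda W(r)=O(r^{-4})$, whence the remainder is $O(r^{-4}\log r)$; differentiating yields the $O(r^{-5}\log r)$ bound in \eqref{bd:Upsilon'}.

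I expect the bookkeeping as $r\to+\infty$ to be the main obstacle: one must verify that the two $r^{-2}$ contributions in the formula for $\Upsilon$ do not cancel and that no term worse than $r^{-4}\log r$ survives in the remainder. The formal-solution cross-check above handles the non-cancellation cleanly, while observing that the expansion of $s^5\Lambda W(s)\Gamma(s)$ at infinity runs over the powers $s,\,s^{-1},\,s^{-3},\dots$ shows that a single logarithm is generated. The behaviour near $r=0$ and all the derivative estimates are then routine once the formula for $\partial_r\Upsilon$ above is in hand.
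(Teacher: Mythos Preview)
Your proposal is correct and follows essentially the same variation-of-parameters construction as the paper: the paper defines
\[
\Upsilon(r)=-\Lambda W(r)\int_0^r \Lambda W(s)\Gamma(s)s^5\,ds-\Gamma(r)\int_r^\infty \Lambda W^2(s)s^5\,ds,
\]
differing from yours only in the base point of the first integral (which shifts $\Upsilon$ by a harmless multiple of $\Lambda W$), and then simply states that the asymptotics follow from the known expansions of $\Lambda W$, $\Gamma$ and their derivatives. Your write-up is in fact more detailed than the paper's, in particular your formal-solution argument pinning down $c_\infty=C_\infty/4\neq 0$ and your explanation of the logarithmic remainder.
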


\begin{proof}

Solving $(-\Delta +V)\Upsilon=\Lambda W$, we define $\Upsilon$ by:
$$
\Upsilon(r)=-\Lambda W(r)\int_0^r \Lambda W (s)\Gamma (s)s^5 ds-\Gamma (r)\int_r^\infty \Lambda W^2 (s) s^5 ds.
$$
Using  that, by the definition of $\Lambda W$, for some $c\neq 0$, $d\neq 0$, 
\begin{align*}
\Lambda W(r)&=c+O(r^{2}), &\frac{d \Lambda W}{dr}(r) &=O(r) &&r\to 0, \\ 
\Lambda W(r)&=d r^{-4}+O(r^{-6}),&\frac{d\Lambda W}{dr}(r)&=-4dr^{-5}+O(r^{-7})&&r\to\infty, 
\end{align*}
and the asymptotics behaviours of $\Gamma$ and $\frac{d \Gamma}{dr}$ given by \eqref{id:asymptoticGamma}, \eqref{id:asymptoticparGamma}, 
one obtains the desired asymptotic behaviour \eqref{bd:Upsilon} by direct computations.

\end{proof}

Observe that the function $(t,r)\mapsto \Upsilon(r)-\frac{t^2}{2}\Lambda W(r)$ solves \eqref{LW} but $\vec{\Upsilon}(t)$ fails critically to belong to the spaces $\HHH_R$, $R>0$. This is thus a resonance for Equation \eqref{LW}.

We define for $\alpha \in \mathbb R$ and $R>0$:
$$
\| f\|_{Z_{\alpha, R}}=\sup_{\rho\geq R} \  \frac{\rho^{-3-\alpha}}{\langle \log \frac{\rho}{\langle R\rangle} \rangle} \ \| f\|_{L^2(\rho \leq r\leq 2\rho)}, \qquad \| f\|_{\widetilde{Z}_{\alpha,R}}=\sup_{\rho>R} \  \rho^{-3-\alpha} \| f\|_{L^2(\rho\leq r\leq 2\rho)}
$$
and for any set $E\subset Z_{\alpha, R}$: 
$$
d_{Z_{\alpha, R}}(u,E)=\inf_{v\in E} \| u-v\|_{Z_{\alpha, R}}.
$$
Then we claim:

\begin{lemma} \label{L:channelsevenunsoliton2}

There exists $C>0$ such that, for any radial solution $u_L$ of \eqref{LW} with $\vec u(0)\in \mathcal H$, for any $R>0$:
$$
d_{Z_{-3,24R}}(\pa_r u_0,\text{Span}(\pa_r \Lambda W,\pa_r \Upsilon))^2 \leq C \sum_{\pm } \lim_{t\rightarrow \pm \infty}  \int_{r\geq |t|+R} |\nabla_{t,x}u(t,x)|^2dx.
$$

\end{lemma}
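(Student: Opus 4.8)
The plan is to reduce the problem for even-in-time solutions of the linearised equation to the odd-in-time channels estimate of Lemma \ref{L:channelsoddunsoliton} by differentiating in time. First I would reduce to $u_0$ being even in time by replacing $u_L$ with $\frac12(u_L(t)+u_L(-t))$, which does not increase the right-hand side and leaves $u_0$ unchanged. Write $w_L=\partial_t u_L$; then $w_L$ solves \eqref{LW} with initial data $(u_1, \Delta u_0 - V u_0)$, it is odd in time, and its exterior energy on $\{r\ge R+|t|\}$ is controlled by $\sum_\pm \lim_{t\to\pm\infty}\int_{r\ge R+|t|}|\nabla_{t,x}u_L|^2\,dx$ together with the equation (since $\partial_t w_L = \Delta u_L - V u_L$ and one can estimate $V u_L$ in exterior Strichartz norms using that $V=-2W\in L^2_tL^4_r(r\ge R+|t|)$, exactly as in the proof of Lemma \ref{L:rigiditylinearoddintime}). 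Applying Lemma \ref{L:channelsoddunsoliton} to $w_L$ (after the finite-speed-of-propagation extension argument and after removing the $\tGamma$ component on small scales, as in the definition \eqref{id:defpiL2R1}) gives control of $\Pi^\perp_{L^2_{R'}}(\Delta u_0 - V u_0)$ in $L^2_{R'}$ for a suitable $R'$ comparable to $R$. Here $\Delta u_0 - V u_0 = \partial_r^2 u_0 + \frac{5}{r}\partial_r u_0 - V u_0$ in six dimensions.

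The second step is an elliptic inversion: from control of $g := \Delta u_0 - V u_0$ in the weighted $L^2$ sense one recovers control of $\partial_r u_0$ in the $Z_{-3}$ norm, modulo the two-dimensional kernel/resonance space $\mathrm{Span}(\Lambda W, \Upsilon)$. Indeed $\Delta + V$ annihilates $\Lambda W$ and $\Gamma$, and $(\Delta+V)(-\Upsilon)=\Lambda W$, so the general radial solution of $(\Delta+V)\phi = g$ is obtained by variation of parameters against the fundamental system $\{\Lambda W,\Gamma\}$ with Wronskian \eqref{id:LambdaWGammawronskian}; I would write the Duhamel-type formula $u_0(r) = a\Lambda W(r) + b\Gamma(r) + \Lambda W(r)\int_r^\infty s^5 \Gamma(s) g(s)\,ds + \Gamma(r)\int_{r_0}^r s^5 \Lambda W(s) g(s)\,ds$ and estimate each term. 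Using the asymptotics \eqref{id:asymptoticGamma}, \eqref{id:asymptoticparGamma} for $\Gamma, \partial_r\Gamma$ and the decay of $\Lambda W$, together with a dyadic Cauchy–Schwarz estimate on the integrals (this is where the $\langle \log R\rangle$ loss in $Z_{-3}$ appears: $\Gamma(r)\sim c' $ as $r\to\infty$ pairs against $s^5\Lambda W(s)\sim ds$, producing a logarithmically divergent integral whose dyadic pieces each cost one factor of $\langle\log\rho\rangle$), one bounds $\|\partial_r u_0 - (\text{projection onto }\partial_r\Lambda W, \partial_r\Gamma)\|_{Z_{-3, 24R}}$ by $\|g\|$-type quantities plus the finite-dimensional corrections. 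The truncated profile $\tGamma$ and $\Upsilon$ differ from $\Gamma$ only by lower-order contributions supported on a bounded set, so replacing $\Gamma$ by $\Upsilon$ in the span costs only a bounded term absorbed into the right-hand side; the shift from $R$ to $24R$ is the accumulated loss from the several finite-speed-of-propagation extensions and the dyadic argument.

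The main obstacle I expect is the careful bookkeeping of the two-dimensional correction space through the reduction: the odd-in-time estimate gives $L^2$ control only after quotienting by $\Lambda W$ (and $\tGamma$ for $R<1$), so one gets control of $g$ only modulo $(\Delta+V)$-images of these, and one must check that upon inverting $\Delta+V$ these ambiguities land exactly inside $\mathrm{Span}(\partial_r\Lambda W,\partial_r\Upsilon)$ — in particular that the potentially dangerous constant $\int_{r_0}^\infty s^5\Lambda W(s)g(s)\,ds$ multiplying $\Gamma$ is either finite or gets absorbed into the $\Upsilon$ term. A related subtlety is that $\vec\Upsilon(t)$ fails critically to be in $\HHH_R$, so $\Upsilon$ itself is only a formal solution; this is precisely why the $Z_{-3}$ norm with its logarithmic weakening, rather than $L^2$ or $\widetilde Z_{-3}$, is the correct target — the estimate must be stated in a norm in which $\partial_r\Upsilon$ has finite size at every scale, which $Z_{-3}$ achieves since $\partial_r\Upsilon(r)\sim -2c_\infty r^{-3}$ gives $\|\partial_r\Upsilon\|_{Z_{-3}}<\infty$. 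I would handle this by working throughout with the genuine solutions and only introducing $\Upsilon$ at the end to name the finite-dimensional span, using \eqref{bd:Upsilon}–\eqref{bd:Upsilon'} to control it.
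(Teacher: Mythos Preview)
Your approach has a genuine regularity gap at the very first step. You propose to set $w_L=\partial_t u_L$ and apply Lemma \ref{L:channelsoddunsoliton} to $w_L$, claiming that the exterior energy of $w_L$ is controlled by that of $u_L$. But the exterior energy of $w_L$ is
\[
\int_{r\ge R+|t|}\bigl(|\nabla \partial_t u_L|^2+|\partial_t^2 u_L|^2\bigr)\,dx
=\int_{r\ge R+|t|}\bigl(|\nabla \partial_t u_L|^2+|\Delta u_L-Vu_L|^2\bigr)\,dx,
\]
which involves one more derivative than $\int_{r\ge R+|t|}|\nabla_{t,x}u_L|^2\,dx$ and is not bounded by it (the equation lets you trade $\partial_t^2$ for $\Delta$, not for $\nabla$). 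Equivalently, $\partial_t w_L(0)=\Delta u_0-Vu_0$ need not lie in $L^2$ for $u_0\in\dot H^1$, so Lemma \ref{L:channelsoddunsoliton} does not even apply to $w_L$ at the given regularity. Your elliptic inversion in the second step would then be fed a quantity you have no control over.

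The paper avoids this loss by never differentiating at $t=0$. Instead it applies Lemma \ref{L:channelsoddunsoliton} to $u_L$ itself, but translated to every time $\bar t$ and every radius $\rho>|\bar t|+R$: this yields the same-regularity bound
\[
\|\partial_t u_L\|_{Y_R}:=\sup_{\bar t,\;\rho>|\bar t|+R}\bigl\|\Pi^\perp_{L^2_\rho}\partial_t u_L(\bar t)\bigr\|_{L^2_\rho}\lesssim \sqrt{E_{\textup{out}}}.
\]
The point is that knowing $\partial_t u_L(t)$ modulo $\mathrm{Span}(\Lambda W,\tGamma)$ for a whole range of times $0\le t\le R_k$ is enough to reconstruct $u_0$: integrating the equation twice in time over $[0,t_k]$ (Lemma \ref{L:estimateufromutunsoliton}) produces an elliptic equation for $u_0$ whose right-hand side is built from the \emph{controlled} pieces $g_k(t)=\Pi^\perp_{L^2_{R_k}}\partial_t u_L(t)$, with the uncontrolled coefficients $\alpha_k(t),\tilde\alpha_k(t)$ disappearing because $\Lambda W,\Gamma$ are annihilated by $-\Delta+V$. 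An elliptic lemma (Lemma \ref{L:ellipticunsoliton}) then gives $u_0$ in $Z_{-2}$ modulo $\mathrm{Span}(\Lambda W,\Upsilon)$, and a local space-time energy identity (Lemma \ref{lem:estimateufromutunsoliton2}) upgrades this to $\partial_r u_0$ in $Z_{-3}$. This two-step averaging/energy mechanism is the missing idea in your proposal.
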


\begin{remark}

The constant $24$ is arbitrary, and used to ease the proof, but the result could be proved for any constant $>1$.

\end{remark}

Assuming Lemma \ref{L:channelsevenunsoliton2} we can prove Lemma \ref{L:channelsevenunsoliton}.

\begin{proof}[Proof of Lemma \ref{L:channelsevenunsoliton}]

Let $R_n\downarrow 0$. Then for any $n$, applying Lemma \ref{L:channelsevenunsoliton2} with $R=R_n/24$, we get that two constants $c_n$ and $c_n'$ exist such that
$$
u_0=c_n \Lambda W+c_n'\Upsilon +\tilde u_{0,n}, \qquad \mbox{with}\qquad  \| \pa_r \tilde u_{0,n}\|_{Z_{-3,R_n}}^2 \lesssim  \sum_{\pm } \lim_{t\rightarrow \pm \infty}  \int_{r\geq |t|} |\nabla_{t,x}u(t,x)|^2dx .
$$
Hence, in the space $L^2(\{1<r<2\})$, both $\pa_r u_0$ and $\pa_r \tilde u_{0,n}$ are uniformly bounded, and so is $c_n\pa_r \Lambda W+c_n'\pa_r \Upsilon $. As $\pa_r \Lambda W$ and $\pa_r \Upsilon $ are non collinear in $L^2(\{1<r<2\})$, we get that $c_n$ and $c_n'$ are bounded sequences. Therefore, $c_n'\pa_r \Upsilon$ is uniformly bounded in $Z_{-3,R_n}$, implying $c_n'\to 0$ because of \eqref{bd:Upsilon'}. 
Up to extracting a subsequence, $c_n$ converges to some limit $c_\infty$, and the uniform bound on $\tilde u_{0,n}$ then implies that at fixed $R>0$, 
$$\langle \log R\rangle ^{-2}\| \pa_r u_0-c_\infty \pa_r\Lambda W \|_{L^2(\{R<r<2R\})}^2 \lesssim  \sum_{\pm } \lim_{t\rightarrow \pm \infty}  \int_{r\geq |t|} |\nabla_{t,x}u(t,x)|^2dx,$$
and thus
$$ \| \pa_r u_0-c_\infty \pa_r\Lambda W \|_{Z_{-3}}^2 \lesssim  \sum_{\pm } \lim_{t\rightarrow \pm \infty}  \int_{r\geq |t|} |\nabla_{t,x}u(t,x)|^2dx.$$
This implies Lemma \ref{L:channelsevenunsoliton}, upon noticing that 
$$\left\|\nabla \big(\Pi_{\dot{H}^1}^{\bot} u_0\big)\right\|^2_{Z_{-3}}=\left\|\nabla \big(\Pi_{\dot{H}^1}^{\bot} (u_0-c_{\infty}\Lambda W\big)\right\|^2_{Z_{-3}}\lesssim \left\|\nabla (u_0-c_{\infty}\Lambda W)\right\|^2_{Z_{-3}},$$
where the last bound follows from explicit computation, using that $|\Lambda W|+\left|r\frac{d}{dr}\Lambda W\right|\lesssim \langle r\rangle^{-4}$.
\end{proof}

The rest of this Subsection is devoted to the proof of Lemma \ref{L:channelsevenunsoliton2}. We decompose its proof in three intermediate lemmas and start with an elliptic result.

\begin{lemma}[Elliptic estimate close to a soliton] \label{L:ellipticunsoliton}

There exists $C>0$ such that the following holds true. Let $R>0$ and assume that $u \in Z_{-2,R}(\mathbb R^6)$ solves for $r\geq R$:
\begin{equation}\label{eq:elliptic1}
-\Delta u+V u=f+\pa_r g
\end{equation}
where $u$, $f$ and $g$ are radial and satisfy:
\begin{equation}\label{eq:hpfg}
\| f\|_{Z_{-4,R}}+\| g\|_{Z_{-3,R}}<\infty.
\end{equation}
Then, if $R\geq 1$:
\begin{equation}\label{bd:ellipticdecay1}
d_{_{Z_{-2, R}}}( u, \textup{Span}(\Lambda W)) \leq C\left(\| f\|_{Z_{-4,R}}+\| g\|_{Z_{-3,R}}  \right)
\end{equation}
while if $0<R< 1$:
\begin{equation}\label{bd:ellipticdecay1'}
d_{_{Z_{-2, R}}}( u, \textup{Span}(\Lambda W,\Upsilon)) \leq C\left(\| f\|_{Z_{-4,R}}+\| g\|_{Z_{-3,R}}  \right).
\end{equation}

\end{lemma}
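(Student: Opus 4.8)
The plan is to reduce the elliptic ODE $-\Delta u + Vu = f + \partial_r g$ to a first-order linear system using the two explicit solutions $\Lambda W$ and $\Gamma$ of the homogeneous equation, exploit the Wronskian relation \eqref{id:LambdaWGammawronskian}, and then carefully estimate the resulting integrals in the weighted norms $Z_{\alpha,R}$. Concretely, writing the radial operator as $-\Delta u = -r^{-5}\partial_r(r^5\partial_r u)$, I would look for a particular solution of \eqref{eq:elliptic1} by variation of parameters: any solution can be written, for $r\geq R$, as
$$
u(r) = a\,\Lambda W(r) + b\,\Gamma(r) + u_{\mathrm{part}}(r),
$$
where $u_{\mathrm{part}}$ is built from $\Lambda W$, $\Gamma$ and the right-hand side via the Wronskian; because $\Gamma(r)\frac{d\Lambda W}{dr}-\Lambda W(r)\frac{d\Gamma}{dr}=r^{-5}$, the variation-of-parameters formula reads
$$
u_{\mathrm{part}}(r)=\Lambda W(r)\int \Gamma(s)\,\big(f(s)+\partial_s g(s)\big)\,s^5\,ds-\Gamma(r)\int \Lambda W(s)\,\big(f(s)+\partial_s g(s)\big)\,s^5\,ds,
$$
with limits of integration chosen (either $R$ or $+\infty$, depending on which of $\Lambda W\sim r^{-4}$, $\Gamma\sim \mathrm{const}$ is the "growing" one at each end) so that the resulting integrals converge and produce the sharp decay. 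The $\partial_s g$ term should be integrated by parts to transfer the derivative onto $\Lambda W$, $\Gamma$, which is legitimate because $\frac{d}{ds}(s^5\Lambda W)$ and $\frac{d}{ds}(s^5\Gamma)$ have the same size as $s^5\Lambda W/s$, $s^5\Gamma/s$ by \eqref{id:asymptoticGamma}–\eqref{id:asymptoticparGamma} and the asymptotics of $\Lambda W$; the boundary terms are harmless or absorbed into the homogeneous part.

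Next I would run the norm bookkeeping. For the contributions coming from $f\in Z_{-4,R}$: the hypothesis $\|f\|_{Z_{-4,R}}<\infty$ means $\|f\|_{L^2(\rho<r<2\rho)}\lesssim \rho^{-1}\langle\log(\rho/\langle R\rangle)\rangle\|f\|_{Z_{-4,R}}$, hence by Cauchy–Schwarz on each dyadic annulus $\int_\rho^{2\rho}|f(s)|s^5\,ds\lesssim \rho^{7/2}\cdot \rho^{-1}\langle\log\rangle\,\rho^{3}\cdot(\text{...})$; summing the dyadic pieces against the weight $\Gamma(s)\sim s^{-4}$ (near $0$) or $\Gamma(s)\sim 1$ (near $\infty$) produces exactly a $Z_{-2,R}$-bound for $u_{\mathrm{part}}$, with the single logarithm in the definition of $Z$ accounting for the borderline summation (this is the mechanism that forces the logarithmic weight in the statement). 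The $g\in Z_{-3,R}$ contributions are handled the same way after the integration by parts, since $\partial_s g$ against $s^5\Gamma$ becomes $g$ against $\frac{d}{ds}(s^5\Gamma)\sim s^4\Gamma$, matching the $Z_{-3}$ scaling. The point is that every dyadic sum either converges geometrically (giving a clean $Z_{-2}$ bound) or is borderline-logarithmic (absorbed by the $\langle\log\rangle$ in $Z_{-2,R}$), and no dyadic sum diverges faster than logarithmically.

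Finally, the dichotomy $R\geq 1$ versus $0<R<1$ reflects which homogeneous solutions must be quotiented out. For $r$ bounded away from $0$ (i.e. $R\geq 1$), $\Gamma(r)=c'+O(r^{-2})$ is bounded and in fact lies in $Z_{-2,R}$ with a bounded norm uniformly in $R$ — wait, more precisely $\Gamma$ itself does \emph{not} decay, so $\Gamma\notin Z_{-2,R}$; but a bounded function is $O(1)=O(\rho^{0})$ which is \emph{not} $o(\rho^{-2+3})=o(\rho)$... let me restate: near $\infty$, $\Gamma\sim c'$ contributes $\rho^{-1}\cdot\rho^{3}\cdot c' = c'\rho^{2}$ to $\|\Gamma\|_{L^2(\rho<r<2\rho)}$, and $Z_{-2}$ measures $\rho^{-1}\|\cdot\|_{L^2(\rho<r<2\rho)}\langle\log\rangle^{-1}$, giving $\rho/\langle\log\rangle\to\infty$, so $\Gamma\notin Z_{-2,R}$ regardless; hence only $\Lambda W$ (which \emph{is} in $Z_{-2,R}$, decaying like $r^{-4}$) survives as a free parameter when $R\geq1$. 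When $0<R<1$, the integration near $r\to 0$ forces a second resonant direction — the function $\Upsilon$ with $\Upsilon(r)=c_0 r^{-4}+O(r^{-2})$ — to appear, because $\Gamma\sim c r^{-4}$ near $0$ has the same size as $\Lambda W$... I would instead identify $\Upsilon$ as precisely the particular solution of $(-\Delta+V)\Upsilon=\Lambda W$ used earlier, and show that the natural particular solution $u_{\mathrm{part}}$ I constructed differs from a multiple of $\Upsilon$ by an element of $Z_{-2,R}$ with controlled norm, which is where the $O(r^{-4}\log r)$ error term in \eqref{bd:Upsilon} gets used. The main obstacle I expect is exactly this bookkeeping near the origin for small $R$: keeping track of the non-decaying/slowly-decaying homogeneous solutions, choosing the integration constants so that the subtraction of $\mathrm{Span}(\Lambda W,\Upsilon)$ really does land the remainder in $Z_{-2,R}$ uniformly in $R\in(0,1)$, and verifying that the logarithmic losses are no worse than a single power of $\langle\log\rangle$ — i.e. that the borderline dyadic sums truly are only logarithmically divergent and not, say, $\langle\log\rangle^2$.
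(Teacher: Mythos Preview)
Your proposal is correct and follows essentially the same route as the paper: variation of parameters with the pair $(\Lambda W,\Gamma)$, integration by parts on the $\partial_r g$ term, and dyadic Cauchy--Schwarz estimates producing the single logarithmic loss. Two small streamlining moves in the paper are worth noting: first, it reduces to $f=0$ at the outset by writing $f=\partial_r\tilde f$ with $\tilde f(r)=-\int_r^\infty f(s)\,ds$ and absorbing $\tilde f$ into $g$; second, for $0<R<1$ it identifies the $\Upsilon$-direction via the pointwise identity $\tilde c\,\Upsilon(r)=\Gamma(r)+O(r^{-2})$ as $r\to 0$ (comparing the $r^{-4}$ leading terms), which makes the subtraction of $\mathrm{Span}(\Lambda W,\Upsilon)$ near the origin completely explicit and resolves exactly the bookkeeping obstacle you flagged.
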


\begin{proof}

We can assume without loss of generality that $f=0$. This is because, defining $\tilde g=g+\tilde f$ with $\tilde f(r)=-\int_r^\infty  f(s)ds$, then $f=\pa_r \tilde f$ so that $-\Delta u+V u=\pa_r \tilde g$, and moreover $|\tilde f(r)|\lesssim \langle \log \frac{r}{\langle R\rangle}\rangle r^{-3}\| f\|_{Z_{-4,R}}$ for all $r>0$ by elementary estimates, so that $\| \tilde g\|_{Z_{-3,R}}\lesssim \| f\|_{Z_{-4,R}}+\| g \|_{Z_{-3,R}}$.\\

\noindent \textbf{Step 1}. \emph{The case $0<R<1$}. Note that in this case:
\be \label{unsoliton:id:equivnorms1}
\| \cdot \|_{Z_{\alpha, R}}\approx \sup_{\rho\geq R} \  \frac{\rho^{-3-\alpha}}{\langle \log \rho\rangle} \ \| \cdot \|_{L^2(\rho\leq r\leq 2\rho)}.
\ee
Solving \eqref{eq:elliptic1} using \eqref{id:LambdaWGammawronskian}, and $u\in Z_{-2,R}$, we find that there exists $c\in \mathbb R$ such that for all $r>R$:
$$
 u(r)
 =c\Lambda W(r)+\Lambda W(r)\int_{1}^r g (s)\pa_s (\Gamma (s)s^5)ds+\Gamma (r) \int_r^\infty g (s)\pa_s (\Lambda W (s)s^5)ds.
$$
We recall that, using \eqref{bd:Upsilon}, there exists $\tilde c\neq 0$ such that
\be \label{unsoliton:id:UpsilonGamma}
\tilde c \Upsilon (r)=\Gamma (r)+O(r^{-2}) \qquad \mbox{as }r\to 0.
\ee
Introducing $\bar c= \tilde c \int_R^\infty g\pa_s (\Lambda Ws^5)ds$, we decompose $u=c\Lambda W+\bar c \Upsilon+\tilde u$ with:
\begin{align*}
\tilde u(r)=\Lambda W(r)\int_{1}^r g (s)\pa_s (\Gamma (s)s^5)ds+\Gamma (r) \int_r^\infty g (s)\pa_s (\Lambda W (s)s^5)ds-\bar c \Upsilon(r).
\end{align*}
For $r\geq 1$ let $k(r)\in \mathbb N$ such that $2^{k(r)-1}\leq r<2^{k(r)}$. Using dyadic partitioning, Cauchy-Schwarz, \eqref{id:asymptoticGamma}, \eqref{id:asymptoticparGamma} and \eqref{unsoliton:id:equivnorms1} we estimate that:
\begin{align}
\nonumber \left| \int_{1}^r g (s)\pa_s (\Gamma (s)s^5)ds \right| & \leq \sum_{k=1}^{k(r)-1} \int_{2^k}^{2^{k+1}} |g (s)\pa_s (\Gamma (s)s^5)|ds  \\
\nonumber & \leq  \sum_{k=1}^{k(r)-1} \left( \int_{2^{k}}^{2^{k+1}} g^2 (s)s^5 ds\right)^{\frac 12}  \left( \int_{2^{k}}^{2^{k+1}} s^3 ds\right)^{\frac 12} \\
\label{unsoliton:bd:ellipticinter} &\leq  \sum_{k=1}^{k(r)-1} \langle k \rangle \| g\|_{Z_{-3,R}} 2^{2k} \ \leq \ r^{2} \langle \log r \rangle \| g\|_{Z_{-3,R}}.
\end{align}
Hence $|\Lambda W(r)\int_{1}^r g (s)\pa_s (\Gamma (s)s^5)ds|\lesssim r^{-2} \langle \log r \rangle \| g\|_{Z_{-3,R}}$. Similarly, $|\Gamma (r) \int_r^\infty g (s)\pa_s (\Lambda W (s)s^5)ds | \lesssim  r^{-2} \langle \log r \rangle \| g\|_{Z_{-3,R}}$, and hence using $|\bar c|\lesssim \| g\|_{Z_{-3,R}}$ and \eqref{bd:Upsilon}:
\begin{equation}\label{id:estimationellipticinter1}
\forall r\geq 1, \qquad \tilde u(r)=O(r^{-2}\langle \log r\rangle \| g\|_{Z_{-3,R}}).
\end{equation}
Next, for $R\leq r\leq 1$ we decompose:
$$
\Gamma (r) \int_r^\infty g (s)\pa_s \left(\Lambda W (s)s^5\right)ds-\bar c \Upsilon(r)=-\Gamma (r) \int_R^r g (s)\pa_s \left(\Lambda W (s)s^5\right)ds+\bar c\left(\frac{1}{\tilde c} \Gamma(r) -\Upsilon(r)\right). 
$$
With computations similar to \eqref{unsoliton:bd:ellipticinter}, we obtain $\left|\Gamma (r) \int_R^r g (s)\pa_s (\Lambda W (s)s^5)ds\right|\lesssim r^{-2}\langle \log r \rangle \|g\|_{Z_{-3,R}}$ and $\left|\Lambda W(r)\int_{1}^r g (s)\pa_s (\Gamma (s)s^5)ds\right|\lesssim r^{-2}\langle \log r \rangle \|g\|_{Z_{-3,R}} $. Using this, \eqref{unsoliton:id:UpsilonGamma} and $|\bar c|\lesssim \|g\|_{Z_{-3,R}}$ we infer:
\begin{equation}\label{id:estimationellipticinter2}
\forall r\in [R,1], \qquad \tilde u(r)=O(r^{-2}\langle \log r\rangle \| g\|_{Z_{-3,R}}).
\end{equation}
Combining \eqref{id:estimationellipticinter1} and \eqref{id:estimationellipticinter2} shows $\| \tilde u\|_{Z_{-2,R}}\lesssim \| g\|_{Z_{-3,R}}$, and hence \eqref{bd:ellipticdecay1'} holds true and the Lemma is proved in the case $0<R<1$.\\

\noindent \textbf{Step 2}. \emph{The case $R\geq 1$}. Note that in this case:
\be \label{unsoliton:id:equivnorms2}
\| \cdot \|_{Z_{\alpha, R}}\approx \sup_{\rho\geq R} \  \frac{\rho^{-3-\alpha}}{\langle \log \frac{\rho}{R}\rangle} \ \| \cdot \|_{L^2(\rho\leq r\leq 2\rho)}.
\ee
We solve \eqref{eq:elliptic1} and get that for some $c\in \mathbb R$, for all $r>R$:
$$
 u(r)
 =c\Lambda W(r)+\Lambda W(r)\int_{R}^r g (s)\pa_s (\Gamma (s)s^5)ds+\Gamma (r) \int_r^\infty g (s)\pa_s (\Lambda W (s)s^5)ds.
$$
For $r\geq R$ let $k(r)\in \mathbb N$ such that $2^{k(r)-1}R \leq r<2^{k(r)}R$. Using dyadic partitioning, Cauchy-Schwarz, \eqref{id:asymptoticGamma}, \eqref{id:asymptoticparGamma} and \eqref{unsoliton:id:equivnorms2} we estimate that:
\begin{align}
\nonumber \left| \int_{R}^r g (s)\pa_s (\Gamma (s)s^5)ds \right| & \leq \sum_{k=1}^{k(r)-1} \int_{2^kR}^{2^{k+1}R} |g (s)\pa_s (\Gamma (s)s^5)|ds  \\
\nonumber &\lesssim  \sum_{k=1}^{k(r)-1} \langle k \rangle \| g\|_{Z_{-3,R}} 2^{2k}R^2 \ \lesssim \ r^{2} \langle \log \frac rR \rangle \| g\|_{Z_{-3,R}}.
\end{align}
Similarly, $|\Gamma (r) \int_r^\infty g (s)\pa_s (\Lambda W (s)s^5)ds|\lesssim  r^{-2} \langle \log \frac rR \rangle \| g\|_{Z_{-3,R}}$. Combining, this proves \eqref{bd:ellipticdecay1} and the Lemma is proved in the case $R\geq 1$.

\end{proof}

If $u_L$ is a solution of \eqref{LW} with initial data in $\HHH$, we denote:
$$
\| \partial_tu_L\|_{Y_R}=\sup_{\substack{t\in \mathbb R\\ \rho> |t|+R}} \ \left\| \Pi^\perp_{L^2_{\rho}} \partial_tu_L(t)\right\|_{L^2_{\rho}},
$$
where $\Pi^\perp_{L^2_{\rho}}$ is defined in \eqref{id:defpiL2R1}. For $k\in \mathbb Z$, $R_k=2^k$ and $0\leq t\leq R_k$, we write for the remaining part of this Subsection:
\begin{equation}\label{id:decompositionutY}
\pa_t u_L(t)= \begin{cases}\alpha_{k}(t)\Lambda W+g_k(t) & \mbox{for }R_k\geq 1,\; t\in \RR\\ \alpha_k(t)\Lambda W+\tilde \alpha_k (t)\tilde \Gamma+g_k(t) & \mbox{for }R_k< 1, \;t\in \RR.  \end{cases} 
\end{equation}
where $g_k(t)=\Pi^\perp_{L^2_{R_k}}\partial_tu_L(t)$, so that:
\begin{equation}\label{bd:ginterufromut}
\sup_{\substack{k\in \mathbb Z, \ R_k\geq R\\ |t|\leq R_k-R}}\| g_k(t)\|_{L^2_{R_k}}\leq \| \partial_tu_L\|_{Y_R}.
\end{equation} 

\begin{lemma}[Estimating $u_L$ in weighted $L^2$ from $\pa_t u_L$ for waves around a soliton] \label{L:estimateufromutunsoliton}
The\-re exists $C>0$ such that the following holds. Assume that $u_L$ is a radial solution to \eqref{LW} with $\vec u(0)\in \mathcal H$. Then, for any $R>0$,
\be \label{unsoliton:bd:evenR>0}
d_{Z_{-2,6R}}(u_0,\text{Span}(\Lambda W,\Upsilon)) \leq C  \| \partial_tu_L\|_{Y_R}.
\ee
\end{lemma}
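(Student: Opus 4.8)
The plan is to express $u_0$ in terms of $\partial_t u_L$ using the equation, and then apply the elliptic estimate of Lemma \ref{L:ellipticunsoliton}. The starting point is the observation that, writing $w=\partial_t u_L$, differentiating \eqref{LW} in time gives $\partial_t^2 w - \Delta w + Vw = 0$ as well, but more usefully: from \eqref{LW} itself we have $\partial_t^2 u_L = \Delta u_L - V u_L$, so $-\Delta u_0 + V u_0 = -\partial_t^2 u_L(0) = -\partial_t w(0)$. This is not yet of the form \eqref{eq:elliptic1} because $\partial_t w(0)$ is not controlled in $Z_{-4}$ directly; one only controls $w(t)=\partial_t u_L(t)$ in the weighted $L^2$ space $Z$-type norms via $\|\partial_t u_L\|_{Y_R}$. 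So instead I would integrate in time: for each dyadic $R_k\geq R$, average the identity $-\Delta u_L(t) + V u_L(t) = -\partial_t w(t)$ against a suitable smooth temporal weight $\psi_k(t)$ supported in $|t|\leq R_k - R$ with $\int \psi_k = 1$ (or use two values of $t$ and a discrete difference), to write
\begin{equation*}
-\Delta \left(\int \psi_k(t) u_L(t)\,dt\right) + V \int \psi_k(t) u_L(t)\,dt = -\int \psi_k(t)\,\partial_t w(t)\,dt = \int \psi_k'(t) w(t)\,dt,
\end{equation*}
so that the right-hand side is now an honest average of $w(t)=\partial_t u_L(t)$, each of which (after projecting off $\Lambda W$, and $\tilde\Gamma$ when $R_k<1$) is bounded in $L^2_{R_k}$ by $\|\partial_t u_L\|_{Y_R}$ via \eqref{bd:ginterufromut}. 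The components along $\Lambda W$ (and $\tilde\Gamma$) contribute multiples of $\Lambda W$ (resp.\ functions whose $\Delta-V$ image vanishes on the relevant range by \eqref{id:LambdaWGammazeroes}, \eqref{id:deftildeGamma}), hence are absorbed into the $\mathrm{Span}(\Lambda W,\Upsilon)$ modulation; here one uses that $(-\Delta+V)\Upsilon=\Lambda W$ to handle the $\alpha_k$-term after integrating twice in time against the weight.

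Next I would estimate the spatial norms. The key point is that $\int\psi_k(t)u_L(t)\,dt$ differs from $u_0$ by $\int\psi_k(t)(u_L(t)-u_0)\,dt$, and $u_L(t)-u_0 = \int_0^t \partial_s u_L(s)\,ds$, which is again an integral of $w$; on the region $r\geq 6R$ and $r\sim\rho\geq 6R$, finite speed of propagation and the support condition $\rho>|t|+R$ in the definition of $\|\cdot\|_{Y_R}$ mean that for $|t|\le R_k-R\le\rho-R$ all these time-slices are controlled. So the plan is: fix $\rho\geq 6R$, pick $k$ with $R_k\sim\rho$, and estimate $\|\partial_r(\text{averaged }u_L)\|_{L^2(\rho\le r\le 2\rho)}$. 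Actually it is cleaner to apply the elliptic Lemma \ref{L:ellipticunsoliton} directly to $v := \int\psi_k(t)u_L(t)\,dt$ on $r\geq R_k$ with right-hand side $f+\partial_r g$ where $f$ (or $g$) is the temporal average of $w$ built above — this average lies in $Z_{-4,R_k}$ (resp.\ $Z_{-3,R_k}$) with norm $\lesssim \|\partial_t u_L\|_{Y_R}$ by \eqref{bd:ginterufromut} after the modulation removal — and conclude $d_{Z_{-2,R_k}}(v,\mathrm{Span}(\Lambda W,\Upsilon))\lesssim\|\partial_t u_L\|_{Y_R}$. Then transfer from $v$ back to $u_0$: the difference $v-u_0$ and its derivative are controlled in the relevant annuli because they are time-integrals of $\partial_t u_L$ over a range of $t$ all lying inside the good cone, so $\|\partial_r(v-u_0)\|_{L^2(\rho\le r\le 2\rho)}\lesssim \rho\langle\log(\rho/\langle R\rangle)\rangle\|\partial_t u_L\|_{Y_R}$, which is exactly the $Z_{-2,R_k}$-scale. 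Summing (really: taking the sup) over dyadic $\rho\geq 6R$ gives \eqref{unsoliton:bd:evenR>0}; the factor $6$ (versus $24$ later) leaves the slack needed for the temporal averaging window and for passing between $R_k$ and an arbitrary $\rho\ge 6R$.

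The main obstacle I anticipate is the bookkeeping of the modulation parameters $\alpha_k(t)$ (and $\tilde\alpha_k(t)$ for $R_k<1$) across the temporal averaging and across different dyadic scales $R_k$, together with making the right-hand side genuinely fit the hypothesis \eqref{eq:hpfg} of the elliptic lemma. Concretely: after writing $\partial_t u_L(t) = \alpha_k(t)\Lambda W + g_k(t)$ and integrating the elliptic identity twice in $t$, the $\Lambda W$-coefficient produces a term $\big(\iint \psi \alpha_k\big)\Lambda W$ on the source side, which via $(-\Delta+V)\Upsilon=\Lambda W$ gets absorbed into the $\Upsilon$-direction of the modulation — but one must check $\iint\psi_k\alpha_k$ is finite and does not blow up, which follows because $\alpha_k(t)$ is bounded (it is a bounded-linear-functional evaluation of $\partial_t u_L(t)$, itself bounded in $L^2_{R_k}$, with the projection coefficient bounded by $\|\partial_tu_L\|_{Y_R}$ up to the norm of $\Lambda W$ on $r\ge R_k$). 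The $\tilde\Gamma$ term for $R_k<1$ is harmless since $(-\Delta+V)\tilde\Gamma$ is supported in $10\le r\le 12$, hence vanishes on $r\ge R_k$ once $R_k$ is small — one just needs $6R\ge 12$... which need not hold, so in the regime $R_k<1$ one instead keeps $\tilde\Gamma$ as part of the ``error'' and uses \eqref{unsoliton:id:UpsilonGamma} to rewrite it modulo $\mathrm{Span}(\Lambda W,\Upsilon)$ plus an $O(r^{-2})$ tail absorbable in $Z_{-2}$. A secondary technical nuisance is that the logarithmic weight $\langle\log(\rho/\langle R\rangle)\rangle$ must be tracked through the time-averaging; since the averaging window has length $\sim R_k\sim\rho$ and we integrate $\partial_t u_L$ which is $O(\|\partial_t u_L\|_{Y_R})$ per slice in $L^2_\rho$, we gain exactly one power of $\rho$, matching $Z_{-2}$ against $Z_{-3}$, and the logarithm is at worst unchanged — so no loss. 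None of these steps is deep, but assembling them uniformly in $R>0$ (both $R_k\ge1$ and $R_k<1$ cases) is where the care is needed.
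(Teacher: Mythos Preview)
Your high-level strategy is right --- express $u_0$ via time-integrals of $\partial_t u_L$ and feed the result into the elliptic Lemma~\ref{L:ellipticunsoliton} --- but there is a genuine gap in the assembly, and it is precisely where the logarithmic weight in $Z_{-2,6R}$ originates.

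The issue is this: you propose to apply the elliptic lemma to $v_k=\int\psi_k(t)u_L(t)\,dt$ on $r\ge R_k$, obtain constants $(c_k,d_k)$ with $\|v_k-c_k\Lambda W-d_k\Upsilon\|_{Z_{-2,R_k}}\lesssim\|\partial_t u_L\|_{Y_R}$, transfer to $u_0$, and then take the supremum over $k$. But the distance $d_{Z_{-2,6R}}(u_0,\mathrm{Span}(\Lambda W,\Upsilon))$ requires a \emph{single} element of the span to approximate $u_0$ simultaneously at all scales $\rho\ge 6R$; a $k$-dependent pair does not suffice. You would need to show that the $(c_k,d_k)$ are all close to a common $(c,d)$, and there is no mechanism in your argument for that --- in fact they are \emph{not} uniformly close, they drift. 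Moreover, the source $\int\psi_k'(t)g_k(t)\,dt$, with the projection taken at the fixed scale $R_k$, has no decay in $\rho$ beyond $\|\cdot\|_{L^2_{R_k}}\lesssim R_k^{-1}\|\partial_t u_L\|_{Y_R}$; so it does not lie in $Z_{-4,R_k}$ and the hypothesis~\eqref{eq:hpfg} of Lemma~\ref{L:ellipticunsoliton} is not met.

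The paper's proof resolves both points. It writes an elliptic identity for $u_0$ \emph{itself} (not for a time-average $v_k$) at each dyadic scale,
\[
(-\Delta+V)u_0 \;=\; -\tfrac{1}{t_k}\partial_t u_L(t_k)\;-\;(-\Delta+V)\!\int_0^{t_k}\!\Big(1-\tfrac{t}{t_k}\Big)\partial_t u_L(t)\,dt,
\]
and then \emph{compares} this identity at consecutive scales $k$ and $k+1$ on their overlap to bound the drift $|c_{k+1}-c_k|\lesssim\|\partial_t u_L\|_{Y_R}$ (similarly for $\tilde c_k$). This yields $c_k=c_{k_1}+O(|k-k_1|)\|\partial_t u_L\|_{Y_R}$, i.e.\ growth that is at most logarithmic in $\rho$ --- exactly what the denominator $\langle\log(\rho/\langle R\rangle)\rangle$ in the $Z$-norm absorbs. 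A dyadic partition of unity in $r$ then patches the scale-dependent identities into a single elliptic equation for $\tilde u_0=u_0-c_{k_1}\Upsilon-\bar u_0$ whose source genuinely lies in $Z_{-4}$, and Lemma~\ref{L:ellipticunsoliton} is applied once. The two missing ingredients in your proposal are therefore the cross-scale comparison of the constants and the partition-of-unity assembly; without them the argument does not close.
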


\begin{proof}

Note first that it suffices to show the result for $u$ even in time. Indeed, for general $u$, it suffices to apply the result to $u_+(t)=\frac{1}{2}(u(t)+u(-t))$, noticing that $\pa_t u_+(t)=\frac{1}{2}(\pa_t u(t)-\pa_t u(-t))$ satisfies $\| \pa_t u_+\|_Y\leq \| \pa_t u\|_Y $.\\

\noindent \textbf{Step 1}. \emph{An elliptic equation for $u_0$}. Let for $k\in \mathbb Z$, $R_k=2^k$ and $t_k=\frac{R_k}{2}$. Let $k_0$ be the unique integer such that $R_{k_0-2}\leq R<R_{k_0-1}$. Note that for all $k\geq k_0$, there holds $R_k\geq R+t_k$. Let $\chi$ be a smooth cut-off function that is supported in $[1,3]$ such that for all $r>0$, $\sum_{k\in \mathbb Z} \chi (r/R_k)=1$. We write $\chi_k(r)=\chi(r/R_k)$. Let $\tilde \chi$ be a nonzero, smooth and nonnegative function with support inside $[2,3]$.

There exists a family of smooth functions $\Phi_k$ for $k\geq -1 $, and $\tilde \Phi_k$ for $k\leq -1$ supported in $[2R_k,3R_k]$, such that $|\pa_r^j \Phi_k|\lesssim  R_k^{-j}$ and $|\pa_r^j \tilde \Phi_k|\lesssim R_k^{-j}$ for $j=0,1,2$, and such that for all $k\leq -1$, $\int \Gamma \tilde \Phi_k =R_k^2$ and $\int \Lambda W \tilde \Phi_k=0$, and for all $k\geq -1$, $\int \Lambda W \Phi_k =R_k^2$, and $\int \Gamma \Phi_{-1} =0$. The proof of this fact, a direct consequence of \eqref{id:asymptoticGamma}, and from the fact that $\Lambda W(r)\approx 1/r^4$ for large $r$, is omitted. Note that we do not need to require $\int \Gamma \Phi_k=0$ for $k\geq 0$. \\

Fix $k\in \mathbb Z$. Applying the fundamental Theorem of Calculus for $\pa_t u_L$ and then $u_L$ between $t=0$ and $t=R_k$, using \eqref{LW} and $\pa_t u(0)=0$ as $u$ is even in time, gives:
\begin{equation}\label{eq:ellipticinter1unsoliton}
-\Delta u_0+V u_0=- \frac{1}{t_k}\pa_t u_L(t_k)-(-\Delta +V)\left( \int_0^{t_k}(1-\frac{t}{t_k})\pa_t u_L(t)dt\right).
\end{equation}
Let $k\geq k_0$. It follows from \eqref{id:LambdaWGammazeroes} and \eqref{id:decompositionutY} that for all $r\geq R_k$ if $k\geq 0$, and for all $R_k\leq r\leq 2$ if $k\leq -1$:
$$
(-\Delta +V)\left( \int_0^{t_k}(1-\frac{t}{t_k})\pa_t u_L(t)dt\right)=(-\Delta +V)\left( \int_0^{t_k}(1-\frac{t}{t_k})g_k(t)dt\right).
$$
In \eqref{id:decompositionutY}, we introduce the notation $c_k=-t_k^{-1}\alpha_k(t_k)$ for all $k\geq k_0$, and $\tilde c_k=-t_k^{-1} \tilde \alpha_k(t_k)$ for\footnote{If $k_0\geq 0$ then $\tilde c_k=0$ for all $k$ by convention.} $k_0\leq k\leq -1$, and $\tilde c_k=0$ for $k\geq 0$, and we obtain the following identity for all $r\geq R_k$ if $k\geq 0$, and for all $R_k\leq r\leq 2$ if $k\leq -1$:
$$
(-\Delta+V) u_0= c_k \Lambda W+\tilde c_{k} \Gamma -\frac{1}{t_k}g_k(t_k)-(-\Delta +V)\left( \int_0^{t_k}(1-\frac{t}{t_k})g_k(t)dt\right).
$$
We equal the two above identities obtained for $k$ and $k+1$ respectively on the interval $[2R_k,\infty)$ if $k\geq 0$, and on the interval $[2R_k,2)$ if $k\leq -1 $, giving:
\begin{align}
\label{id:computationconstants}&(c_{k+1}-c_k) \Lambda W+(\tilde c_{k+1}-\tilde c_k)\Gamma \ = \ -\frac{1}{t_k}g_k(t_k)+\frac{1}{t_{k+1}} g_{k+1}(t_{k+1})\\
 \nonumber  &\qquad \qquad \qquad \qquad + (-\Delta +V)\left( \int_0^{t_{k+1}}(1-\frac{t}{t_{k+1}})g_{k+1}(t)dt- \int_0^{t_{k}}(1-\frac{t}{t_{k}})g_{k}(t)dt \right).
\end{align}
If $k\geq -1$, we integrate \eqref{id:computationconstants} against $\Phi_k$, and find after using $\tilde c_k=0$ for $k\geq 0$, integrating by parts and estimating using \eqref{bd:ginterufromut}, that $(c_{k+1}-c_k)R_k^2=O(R_k^2\| \partial_tu_L\|_{Y_R})$, and hence:
$$
|c_{k+1}-c_k|\lesssim \| \pa_tu_L\|_{Y_R} \qquad \mbox{for all }k\geq -1.
$$
If $k\leq -2$, we integrate \eqref{id:computationconstants} against $\Phi_{-1}$ and find after integrating by parts and estimating using \eqref{bd:ginterufromut}, that $(c_{k+1}-c_k)\frac{1}{4}=O(R_k^{-1}\|  \pa_tu_L\|_{Y_R})$, so that:
$$
|c_{k+1}-c_k|\lesssim R_k^{-1}\|  \pa_tu_L\|_{Y_R} \qquad \mbox{for all }k\leq -2.
$$
Let $k_1=\max (k_0,0)$. We deduce from the two inequalities above that for all $k\in \mathbb Z$:
\begin{equation}\label{bd:ckinterufromut}
c_k=c_{k_1}+ c_k', \qquad \begin{array}{l l}  \mbox{with, if } k_0 \leq 0, \qquad c_k'=\left\{ \begin{array}{l l} O(k\|  \pa_tu_L\|_{Y_R}) \qquad \mbox{for }k\geq 0, \\ O(R_k^{-1}\|  \pa_tu_L\|_{Y_R})\qquad \mbox{for }k_0\leq k\leq -1,  \end{array} \right. \\ \\ \mbox{or, if  } k_0 > 0 \qquad c_k'= O(|k-k_0| \|  \pa_tu_L\|_{Y_R})) \quad \mbox{for } \quad k\geq k_0. \end{array}
\end{equation}
If $k\leq -1$, we integrate \eqref{id:computationconstants} against $\tilde \Phi_{k}$ and find, after integrating by parts and estimating using \eqref{bd:ginterufromut}, that $(\tilde c_{k+1}-\tilde c_k)R_k^2=O(R_k^{2}\|  \pa_tu_L\|_{Y_R})$, and get that:
$$
|\tilde c_{k+1}-\tilde c_k|\lesssim \|  \pa_tu_L\|_{Y_R}.
$$
We deduce from the inequality above and from the fact that $\tilde{c}_0=0$, that, when $k_0\leq -1$, then for all $k_0\leq k\leq -1$:
\begin{equation}\label{bd:ckinterufromut2}
|\tilde c_k|\lesssim |k|  \|  \pa_tu_L\|_{Y_R}.
\end{equation}
We then get the following identity for all $k\geq k_0$ and $R_k\leq r \leq 3R_k$:
$$
-\Delta u_0+V u_0= c_{k_1} \Lambda W+\underbrace{c_k' \Lambda W+\tilde c_{k}\Gamma -\frac{1}{t_k}g_k(t_k)}_{=f^1_k}-(-\Delta +V)\left( \int_0^{t_k}(1-\frac{t}{t_k})g_k(t)dt\right).
$$
We compute the commutator relation for $k\in \mathbb Z$:
\begin{align*}
&(-\Delta +V) \left(\chi_k \left( \int_0^{t_k}(1-\frac{t}{t_k})g_k(t)dt\right)\right) -\chi_k (-\Delta +V) \left( \int_0^{t_k}(1-\frac{t}{t_k})g_k(t)dt\right)\\
&= \underbrace{R_k^{-2}(2\chi''-\Delta \chi) (\frac{r}{R_k})  \int_0^{t_k}(1-\frac{t}{t_k})g_k(t)dt }_{=f^2_k}+\pa_r \underbrace{\left( -2R_k^{-1}\chi' (\frac{r}{R_k} )   \int_0^{t_k}(1-\frac{t}{t_k})g_k(t)dt\right)}_{= \tilde g_k}.
\end{align*}
Using the $\chi$ based partition of unity, we have that on $[\frac 32 R_{k_0},\infty)$:
\begin{align*}
& (-\Delta +V) u_0 \ = \ \sum_{k\geq k_0}\chi_k(-\Delta +V)u_0 \\
&=c_{k_1}\Lambda W+ \sum_{k\geq k_0} (\chi_k f_k^1+f_k^2)+\pa_r \left(\sum_{k\geq k_0} \tilde g_k\right)-(-\Delta +V)\left(\sum_{k\geq k_0} \chi_k \left( \int_0^{t_k}(1-\frac{t}{t_k})g_k(t)dt\right)\right).
\end{align*}
We decompose:
\begin{equation}\label{eq:decompositionu0wave1}
u_0=c_{k_1}\Upsilon+\bar u_0+\tilde u_0, \qquad \bar u_0=-\sum_{k\geq k_0} \chi_k \left( \int_0^{t_k}(1-\frac{t}{t_k})g_k(t) dt\right).
\end{equation}
The new unknown $\tilde u_0$ solves for $r\geq \frac 32 R_{k_0}$:
\begin{equation}\label{eq:elliptictildeu01}
(-\Delta +V )\tilde u_0 = f+\pa_r g, \ \ f=\sum_{k\geq k_0}\left(\chi_k f_k^1+f_k^2\right), \ \ g=\sum_{k\geq k_0} \tilde g_k .
\end{equation}

\noindent \textbf{Step 2.} \emph{Solving the elliptic equation}. We estimate each term in \eqref{eq:elliptictildeu01}. For the first one, using \eqref{bd:ckinterufromut}, \eqref{bd:ckinterufromut2} and \eqref{bd:ginterufromut}, and the definition of $k_0$ and $k_1$, for all $k\geq k_0$:
$$
\| f_k^1\|_{L^2(R_k\leq r\leq 3R_k)}\lesssim   R_k^{-1} \langle \log \frac{R_k}{\langle R\rangle} \rangle \| \pa_t u\|_{Y_R}.
$$
For the second one, using \eqref{bd:ginterufromut} we estimate:
$$
\| f_k^2\|_{L^2(R_k\leq r\leq 3R_k)}\lesssim R_k^{-2} \int_0^{t_k} \| g_k (t)\|_{L^2(R_k\leq r\leq 3R_k)}dt\lesssim R_k^{-1} \| \pa_t u\|_{Y_R}.
$$
Hence
\begin{equation}\label{bd:interwavef1}
\| f\|_{Z_{-4,R_{k_0}}}\lesssim   \| \pa_t u\|_{Y_R} .
\end{equation}
We next estimate $\tilde g_k$ using \eqref{bd:ginterufromut}:
$$
\| \tilde g_k\|_{L^2(R_k\leq r\leq 3R_k)}\lesssim R_k^{-1} \int_0^{t_k} \| g_k (t)\|_{L^2(R_k\leq r\leq 3R_k)}dt\lesssim  \| \pa_t u\|_{Y_R}.
$$
so that:
\begin{equation}\label{bd:interwaveg1}
\| g\|_{\widetilde{Z}_{-3,R_{k_0}}}\lesssim   \| \pa_t u\|_{Y_R}.
\end{equation}
We estimate similarly to \eqref{bd:interwaveg1} that:
\begin{equation}\label{bd:interwavebaru01}
\| \bar u_0 \|_{\widetilde{Z}_{-2,R_{k_0}}}\lesssim   \| \pa_t u\|_{Y_R}.
\end{equation}
We now consider the equation \eqref{eq:elliptictildeu01} for $\tilde u_0$, and apply Lemma \ref{L:ellipticunsoliton}, using the estimate \eqref{bd:interwavef1} and \eqref{bd:interwaveg1} and get:
$$
d_{_{Z_{-2,\frac 32R_{k_0}}}} ( \tilde u_0, \textup{Span}(\Lambda W,\Upsilon)) \lesssim    \| \pa_t u\|_{Y_R}.
$$
Injecting the above inequality, using $\frac 32 R_{k_0}\leq 6R$, in the decomposition \eqref{eq:decompositionu0wave1}, using \eqref{bd:interwavebaru01}, gives the desired bound of Lemma \ref{L:estimateufromutunsoliton}.

\end{proof}

The next Lemma upgrades the weighted $L^2$ bound of Lemma \ref{L:estimateufromutunsoliton} into a weighted $L^2$ bound for the gradient.

\begin{lemma}[Estimating $\nabla u$ from $u_0$ and $\pa_t u$ for waves around a soliton] \label{lem:estimateufromutunsoliton2}

There exists $C>0$ such that the following holds true. Assume that $u_L$ solves \eqref{LW}, with $\vec u(0)\in \mathcal H$. Then for any $R>0$:
$$
d_{Z_{-3,24R}}(\pa_r u_0,\text{Span}(\pa_r \Lambda W,\pa_r \Upsilon))\lesssim d_{Z_{-2,6R}}(u_0,\text{Span}(\Lambda W,\Upsilon)) +  \| \partial_t u\|_{Y_R}.
$$

\end{lemma}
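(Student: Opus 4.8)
The plan is to upgrade the weighted $L^2$-bound on $u_0$ itself (Lemma \ref{L:estimateufromutunsoliton}) to a weighted $L^2$-bound on $\partial_r u_0$ by exploiting the elliptic equation satisfied by $u_0$. The starting point is the identity \eqref{eq:ellipticinter1unsoliton}–style relation: since $u_L$ is even in time (and it suffices to treat this case, as in the proof of Lemma \ref{L:estimateufromutunsoliton}), writing things out gives an equation of the form $-\Delta u_0 + Vu_0 = f + \partial_r g$ on $r\geq cR$, where $f$ and $g$ are built from $g_k(t)=\Pi^\perp_{L^2_{R_k}}\partial_tu_L(t)$ exactly as in \eqref{eq:elliptictildeu01}, and hence satisfy $\|f\|_{Z_{-4,R'}}+\|g\|_{\widetilde Z_{-3,R'}}\lesssim \|\partial_t u_L\|_{Y_R}$ for the relevant $R'\approx R$ by \eqref{bd:interwavef1}–\eqref{bd:interwaveg1}. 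I would first record that $u_0 = c\Lambda W + c'\Upsilon + \tilde u_0$ with $\|\tilde u_0\|_{Z_{-2,6R}}\lesssim \|\partial_t u_L\|_{Y_R}$ from Lemma \ref{L:estimateufromutunsoliton} (the constants $c,c'$ being the ones realizing the distance, up to a harmless error), and note that $\Delta u_0$ then equals $Vu_0 - f - \partial_r g$, which is controlled in a weighted-$L^2$ sense because $V=-2W^{4/(N-2)}$ decays like $\langle r\rangle^{-4}$ and $u_0$ is controlled in $Z_{-2,6R}$, so $Vu_0\in Z_{-6,6R}$ with the right bound, and $f\in Z_{-4,6R}$, $\partial_r g$ with $g\in \widetilde Z_{-3,6R}$.

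The core step is a Caccioppoli/interior-gradient estimate on dyadic annuli: on each annulus $A_\rho=\{\rho<r<2\rho\}$ with $\rho\geq 24R$, multiply the equation $-\Delta u_0 = F_\rho := -Vu_0 + f + \partial_r g$ by $\varphi_\rho^2 u_0$ with $\varphi_\rho$ a cutoff adapted to a slightly larger annulus $\{\tfrac12\rho < r < 4\rho\}$ satisfying $|\partial_r\varphi_\rho|\lesssim \rho^{-1}$, integrate by parts, and absorb the cross term, to obtain
$$
\int_{A_\rho} |\partial_r u_0|^2 r^5\,dr \;\lesssim\; \rho^{-2}\int_{\{\frac12\rho<r<4\rho\}} u_0^2 r^5\,dr \;+\; \int_{\{\frac12\rho<r<4\rho\}} |F_\rho|\,|u_0|\, r^5\,dr
$$
(the $\partial_r g$ term is handled by one more integration by parts, moving the derivative onto $\varphi_\rho^2 u_0$, which costs $\rho^{-1}$ or lands on $\partial_r u_0$, the latter absorbed by Young's inequality). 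Dividing by $\rho^{6}\langle\log\frac{\rho}{\langle R\rangle}\rangle^2$ and taking the supremum over $\rho\geq 24R$, the right-hand side becomes $\lesssim \|u_0\|_{Z_{-2,6R}}^2 + \|f\|_{Z_{-4,6R}}\|u_0\|_{Z_{-2,6R}} + \|g\|_{\widetilde Z_{-3,6R}}\|\partial_r u_0\|_{Z_{-3,12R}} + \dots$, i.e. all terms are controlled by $\|u_0\|_{Z_{-2,6R}}^2 + \|\partial_t u_L\|_{Y_R}^2$ after a final absorption. Applying this to $\tilde u_0 = u_0 - c\Lambda W - c'\Upsilon$ — which solves the same type of equation with the same right-hand side bounds — yields
$$
\|\partial_r \tilde u_0\|_{Z_{-3,24R}} \;\lesssim\; \|\tilde u_0\|_{Z_{-2,6R}} + \|\partial_t u_L\|_{Y_R} \;\lesssim\; d_{Z_{-2,6R}}(u_0,\mathrm{Span}(\Lambda W,\Upsilon)) + \|\partial_t u_L\|_{Y_R},
$$
and since $\partial_r\tilde u_0 = \partial_r u_0 - c\,\partial_r\Lambda W - c'\,\partial_r\Upsilon$, the left-hand side dominates $d_{Z_{-3,24R}}(\partial_r u_0,\mathrm{Span}(\partial_r\Lambda W,\partial_r\Upsilon))$, which is the claim.

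The main obstacle I anticipate is bookkeeping the logarithmic weights and the $R$-dependence cleanly through the Caccioppoli step: the $Z_{\alpha,R}$-norms have a $\langle\log\frac{\rho}{\langle R\rangle}\rangle$ weight, the annuli overlap with neighbors (so the sup-norm estimate needs the log weights at $\tfrac12\rho$ and $4\rho$ to be comparable to that at $\rho$, which is true since $\langle\log\frac{2\rho}{\langle R\rangle}\rangle \approx \langle\log\frac{\rho}{\langle R\rangle}\rangle$ uniformly for $\rho\geq 24R$), and the cross term involving $g\in\widetilde Z_{-3}$ must be absorbed into $\|\partial_r u_0\|_{Z_{-3,24R}}$ on the left — so one should first prove the estimate with $\widetilde Z_{-3,24R}$ replaced by a localized finite quantity (e.g. restrict $\rho$ to a bounded range, or carry an $\varepsilon$), establish finiteness of $\|\partial_r u_0\|_{Z_{-3,24R}}$ a priori from $\vec u_L(0)\in\mathcal H$ and the decay of $\Lambda W,\Upsilon$, and only then absorb. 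The passage from $\Delta$ in $\mathbb R^6$ to the radial operator $\partial_r^2 + \tfrac5r\partial_r$ and the handling of the inner boundary $r=24R$ (where $\varphi_\rho$ is supported away from, so no boundary terms appear for $\rho\geq 24R$ since the cutoff lives in $r>\tfrac12\rho\geq 12R>R$) are routine once the weights are pinned down.
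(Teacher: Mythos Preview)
Your approach has a genuine gap in the claim that $\hat u_0 := u_0 - c\Lambda W - c'\Upsilon$ (your $\tilde u_0$) satisfies an elliptic equation $(-\Delta+V)\hat u_0 = f + \partial_r g$ with $\|f\|_{Z_{-4}}+\|g\|_{\widetilde Z_{-3}}\lesssim q$. The equation \eqref{eq:elliptictildeu01} from the proof of Lemma \ref{L:estimateufromutunsoliton} is \emph{not} for $u_0$ or $\hat u_0$ but for the paper's $\tilde u_0 = u_0 - c_{k_1}\Upsilon - \bar u_0$, where $\bar u_0 = -\sum_k \chi_k \int_0^{t_k}(1-t/t_k)g_k(t)\,dt$ is \emph{not} in $\mathrm{Span}(\Lambda W,\Upsilon)$. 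Consequently the true right-hand side for $\hat u_0$ contains an additional term $(-\Delta+V)\bar u_0$, and this term involves $\Delta g_k$ (equivalently, $\partial_r\bar u_0$ involves $\partial_r g_k$). The $Y_R$-norm only controls $\|g_k(t)\|_{L^2_{R_k}}$, not any spatial derivative of $g_k$, and $\partial_r g_k = \partial_t\partial_r u_L - \alpha_k\partial_r\Lambda W - \ldots$ is bounded only by the full energy $\|\vec u_L\|_{\mathcal H}$, not by $q$. No amount of integration by parts in the spatial Caccioppoli can avoid this: moving both derivatives off $\bar u_0$ lands $\Delta$ on $\varphi^2\hat u_0$, which reintroduces the same uncontrolled right-hand side.

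This is precisely why the paper uses a \emph{space-time} energy identity instead. On each dyadic annulus the paper projects $u_L(t)$ (not just at $t=0$) locally onto the orthogonal of $\mathrm{Span}(\Lambda W,\tilde\Gamma)$, obtaining $v_k(t)$, and applies the identity $\iint \psi(\partial_{tt}v-\Delta v+Vv)v = \iint(|\nabla v|^2-(\partial_t v)^2)\psi + \iint(V+\tfrac12\Box)\psi\, v^2$ with a space-time cutoff $\psi_k$. The left side vanishes by orthogonality, and the right side is controlled because both $\|v_k\|_{L^2}$ and $\|\partial_t v_k\|_{L^2}$ are bounded by $q$ (the latter is exactly the $Y_R$ control; no spatial derivative of $g_k$ is needed). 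This yields a gradient bound on $v_k$ at some time $|t_k|\leq R_k/2$, which is then transported back to $t=0$ by solving the linearized wave equation with finite speed of propagation. In short, the wave equation is used to trade the missing spatial derivative for a time derivative, and your purely elliptic Caccioppoli misses this mechanism.
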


\begin{proof}

We let $q=d_{Z_{-2,6R}}(u_0,\text{Span}(\Lambda W,\Upsilon)) +  \| \partial_tu\|_{Y_R}$ to ease notations. There exist $a_0,\bar a_0\in \mathbb R$ such that $\| u_0-a_0\Lambda W-\bar a_0 \Upsilon\|_{Z_{-2,6R}}\leq 2q$. Consider $u'=u+(\bar a_0\frac{t^2}{2}-a_0)\Lambda W-\bar a_0\Upsilon$. Then $u'$ also solves \eqref{LW}, with $\| u'_0\|_{Z_{-2,6R}}+ \| u'_t\|_{Y_R}\leq 3q$. Hence, it suffices to prove\footnote{We do a slight abuse since $u_0'$ might not belong to $\mathcal H$, but all the computations of the proof are nonetheless valid.} the result of the lemma for $u_0'$. Without loss of generality, we thus assume that $a_0=\bar a_0=0$ so that:
\be \label{id:decompositionu0inter}
\| u_0\|_{Z_{-2,6R}}\lesssim q.
\ee 

We let for $k\in \mathbb Z$, $R_k=2^k$ and $t_k=R_k/2$. Let $k_0$ be such that $R_{k_0-1}<3R\leq R_{k_0} $. We recall the decomposition \eqref{id:decompositionutY}.\\

\noindent \textbf{Step 1}. \emph{An energy identity for a projection of $u$}. We have the following energy type equality for suitable radial functions, which is obtained by performing integration by parts:
\be \label{id:L2toH1logenergyidentity}
\iint \psi  (\pa_{tt}v -\Delta v +V v)v  dt dx = \iint (|\nabla v|^2-(\pa_t v)^2)\psi dtdx +\iint \Big(V+\frac 12 (\pa_{tt} -\Delta )\Big)\psi v^2 dtdx.
\ee
We let $\chi$ be a smooth nonnegative radial cut-off function, with $\chi(r)=1$ for $r\in [3, 9]$ and $\chi(r)=0$ for $r\leq 2$ and $r\geq 10$. We let $\tilde \chi$ be a smooth one-dimensional cut-off, $\tilde \chi(t)=1$ for $|t|\leq 1/2$ and $\tilde \chi(t)=0$ for $|t|\geq 1$. We define $\chi_k (r)=\chi(r/R_k)$ and $\tilde \chi_k(t)=\tilde \chi(t/R_k)$, and $\psi_k(t,r)=\tilde \chi_k(t)\chi_k(r)$.

We now pick $k\geq k_0$. Observe that there holds $\textup{supp}(\chi_{R_k})\subset \{r\geq 6R\}$ and $\textup{supp}(\psi_k) \subset \{r\geq |t|+R\}$. We introduce 
$$\langle u,v\rangle_{L^2_{\chi_k}}= \int uv\chi_k,\quad \| u\|_{L^2_{\chi_k}}=\sqrt{\langle u,u\rangle_{\chi_k}}.$$ 
For each $k\in \mathbb Z$, and $|t|\leq R_k$, we let $b_k(t)$ and $\tilde b_k(t)$, with $\tilde b_k=0$ for $k\geq 0$ by convention, be the unique parameters such that:
\be \label{id:decompositionuinter}
u_L(t) =v_k(t)+b_k(t)\Lambda W+\tilde b_k(t)\tilde \Gamma,
\ee
where we recall that $\tilde \Gamma$ is defined by \eqref{id:deftildeGamma}, and $v_k$ satisfies the orthogonality conditions:
\be \label{bd:L2toH1loginterortho}
 \int v_k(t) \Lambda W \chi_k dx=0 \quad \mbox{for all }k\in \mathbb Z, \quad \mbox{ and } \int v_k(t) \tilde \Gamma \chi_k dx=0 \quad \mbox{for all }k\leq -1.
\ee
Then, using $(-\Delta +V)\Lambda W=0$, and $(-\Delta +V)\tilde \Gamma=0$ for $r\leq 10$ from \eqref{id:deftildeGamma}, we obtain that $v_k$ solves the following equation on the support of $\chi_k$, that is, on $\{ 2R_k \leq r\leq 10R_k \}$:
$$
\pa_{tt} v_k=\Delta v_k -V v_k -\pa_{tt}b_k \Lambda W-\pa_{tt}\tilde b_k \tilde \Gamma.
$$
Hence, from \eqref{id:L2toH1logenergyidentity} and the orthogonality conditions \eqref{bd:L2toH1loginterortho}, we get the energy identity for $v_k$:
\be \label{id:L2toH1logenergyidentityv}
\iint |\nabla v_k|^2 \psi_k dtdx =\iint (\pa_t v_k)^2\psi_k dtdx -\iint \Big(V+\frac 12 (\pa_{tt} -\Delta )\Big)\psi_kv_k^2 dtdx.
\ee

\noindent \textbf{Step 2}. \emph{Estimates for $v_k$ and $\pa_t v_k$}. We claim that, for all $|t|\leq R_k$:
\be \label{id:L2toH1bornesintervk}
 \| v_k(t) \|_{L^2(2R_k\leq r \leq 10R_k)} \lesssim R_k  \langle \log \frac{R_k}{\langle R\rangle} \rangle q   \quad \mbox{and}\quad  \|  \pa_t v_k(t) \|_{L^2(2R_k\leq r \leq 10R_k)} \lesssim q .
\ee
To show it, using the decompositions \eqref{id:decompositionutY} and \eqref{id:decompositionuinter}, we write:
\begin{align} \label{id:L2toH1bornesintervk0decomposition}
& u_0=v_k(0)+\left[b_k(0)\Lambda W+\tilde b_k(0)\tilde \Gamma \right],\\
\label{id:L2toH1bornesintervk0decomposition2}
& g_{k}(t)=\pa_t v_k(t)+\left[(\pa_t b_k(t)-\alpha_{R_k}(t))\Lambda W+(\pa_t \tilde b_k(t)-\tilde \alpha_{R_k}(t))\tilde \Gamma\right]
\end{align}
where the second equality is for $|t|\leq R_k$. Above, observe that:
\begin{align} \label{id:L2toH1bornesintervk0tech}
&  \| u_0\|_{L^2_{\chi_k}}\lesssim  \| u_0\|_{L^2(2R_k\leq r \leq 10R_k)} \lesssim R_k \langle \log \frac{R_k}{\langle R\rangle}\rangle \| u_0 \|_{Z_{-2,R}}\leq  R_k  \langle \log \frac{R_k}{\langle R\rangle} \rangle q,\\
 \label{id:L2toH1bornesintervk0tech2}
 & \| g_{k}\|_{L^2_{\chi_k}}\lesssim  \| g_k\|_{L^2(2R_k\leq r \leq 10R_k)} \lesssim  \| g_k\|_{L^2_{2R_k}}  \leq  \| \partial_tu\|_{Y_R} \leq   q.
\end{align}
The two terms in the right-hand sides of \eqref{id:L2toH1bornesintervk0decomposition}, and of \eqref{id:L2toH1bornesintervk0decomposition2} respectively, are orthogonal for the bilinear form $\langle\cdot,\cdot\rangle_{L^2_{\chi_k}}$ from \eqref{bd:L2toH1loginterortho}. Thus, by Pythagoras, \eqref{id:L2toH1bornesintervk0tech} and \eqref{id:L2toH1bornesintervk0tech2}:
\begin{align*}
&\| b_k(0)\Lambda W+\tilde b_k(0)\tilde \Gamma   \|_{L^2_{\chi_k}}\leq \| u_0\|_{L^2_{\chi_k}} \lesssim  R_k  \langle \log \frac{R_k}{\langle R \rangle} \rangle q,\\
& \|  (\pa_t b_k(t)-\alpha_{R_k}(t))\Lambda W+(\pa_t \tilde b_k(t)-\tilde \alpha_{R_k}(t))\Lambda W  \|_{L^2_{\chi_k}} \leq \| g_{k}(t)\|_{L^2_{\chi_k}} \lesssim   q.
\end{align*}
Given the behaviours of $\Lambda W$ and $\tilde \Gamma$, notice that for all general $c_1,c_2\in \mathbb R$:
$$
\| c_1 \Lambda W+c_2 \tilde \Gamma   \|_{L^2(2R_k\leq r\leq 10R_k)} \lesssim \| c_1\Lambda W+c_2 \tilde \Gamma   \|_{L^2_{\chi_k}}.
$$
Combining, we get:
\begin{align}\label{id:L2toH1bornesintervk0tech3}
&\| b_k(0)\Lambda W+\tilde b_k(0)\tilde \Gamma   \|_{L^2(2R_k\leq r\leq 10R_k)} \lesssim  R_k  \langle \log \frac{R_k}{\langle R\rangle} \rangle q,\\
\label{id:L2toH1bornesintervk0tech4} & \|  (\pa_t b_k(t)-\alpha_{R_k}(t))\Lambda W+(\pa_t \tilde b_k(t)-\tilde \alpha_{R_k}(t))\Lambda W  \|_{L^2(2R_k\leq r\leq 10R_k)} \lesssim   q.
\end{align}
Injecting \eqref{id:L2toH1bornesintervk0tech} and \eqref{id:L2toH1bornesintervk0tech3} in \eqref{id:L2toH1bornesintervk0decomposition}, and injecting \eqref{id:L2toH1bornesintervk0tech2} and \eqref{id:L2toH1bornesintervk0tech4} in \eqref{id:L2toH1bornesintervk0decomposition2}, one obtains the first inequality in \eqref{id:L2toH1bornesintervk} at $t=0$, and the second inequality in \eqref{id:L2toH1bornesintervk}. Combined together, they imply the first inequality in \eqref{id:L2toH1bornesintervk} for all $|t|\leq R_k$. Hence \eqref{id:L2toH1bornesintervk} is established.\\

\noindent \textbf{Step 3}. \emph{Estimate for $\nabla v_k$}. Using the bounds \eqref{id:L2toH1bornesintervk}, the energy identity \eqref{id:L2toH1logenergyidentityv} gives:
$$
\iint |\nabla v_k|^2 \psi_k dtdx \lesssim R_k \langle \log \frac{R_k}{\langle R \rangle} \rangle^2 q^2.
$$
By the mean value Theorem and the definition of $\psi_k$, there exists a time $|t_k|\leq R_k/2$ such that:
\be \label{id:L2toH1bornesintervk2}
\int_{3R_k\leq |x|\leq 9R_k} |\nabla v(t_k)|^2 dx \lesssim  \langle \log \frac{R_k}{\langle R \rangle} \rangle^2 q^2 .
\ee

\noindent \textbf{Step 4}. \emph{Estimate for $\nabla u_0$}. For all $k\in \mathbb Z$, we have at the time $t_k$ of Step 3, using \eqref{id:decompositionutY}, \eqref{id:decompositionuinter}:
\begin{align*}
& u_L(t_k)=v_k(t_k)+b_k(t_k)\Lambda W+\tilde b_k(t_k)\tilde \Gamma, \\
& \pa_t u_L(t_k)=g_{k}(t_k)+\alpha_{k}(t_k)\Lambda W+\tilde \alpha_k(t_k)\tilde \Gamma.
\end{align*}
where we use the convention that $\tilde \alpha_k(t_k) =0$ for $k\geq 0$. Hence, by finite speed of propagation, using \eqref{id:deftildeGamma}, for $|t|\leq t_k$ and $4R_k\leq r\leq 8R_k$, we have 
$$
u_L(t)=w_k(t)+(b_k(t_k)+(t-t_k)\alpha_{k}(t_k))\Lambda W+(\tilde b_k(t_k)+(t-t_k)\tilde \alpha_{k}(t_k))\Gamma
$$
where $w_k$ is the solution to
$$
\pa_{tt}w_k=\Delta w_k-Vw_k, \quad w_k(t_k)=v_k(t_k), \quad \pa_t w_k(t_k)=g_{k}(t_k).
$$
Introducing $d_k=b_k(t_k)-t_k\alpha_{R_k}(t_k)$ and $e_k=\tilde b_k(t_k)-t_k\tilde \alpha_{R_k}(t_k)$ we obtain that for $4R_k\leq r\leq 8R_k$:
\be \label{id:L2toH1idinteru}
u_0=w_k(0)+d_k \Lambda W+e_k\Gamma
\ee
Let an extension $(\tilde w_{k,0},\tilde w_{k,1})$ be such that $(\tilde w_{k,0},\tilde w_{k,1})(r)=(v_k,g_k)(t_k,r)$ for all $r\in [3R_k,9R_k]$ with
\be \label{L2toH1:bd:estimationtildewk}
\left\|\left( \tilde w_{k,0},\tilde w_{k,1}\right)\right\|_{\mathcal H}^2\lesssim \int_{3R_k\leq |x|\leq 9R_k} (|\nabla v_k(t_k)|^2+R_k^{-2}|v_k(t_k)|^2+|g_k(t_k)|^2)dx,
\ee
and let $\tilde w_k$ solve $\pa_t^2 \tilde w_k-\Delta \tilde w_k+V\tilde w_k=0$ with data $(\tilde w_k(t_k),\pa_t \tilde w_k(t_k))=(\tilde w_{k,0},\tilde w_{k,1})$.
Then by finite speed of propagation, as $|t_k|\leq R_k/2$ we obtain $\tilde w_k(0)=w_k(0)$ for all $4R_k\leq r \leq 8R_k$. Applying standard energy estimates, and then using \eqref{L2toH1:bd:estimationtildewk} with \eqref{id:L2toH1bornesintervk2}, \eqref{id:L2toH1bornesintervk} and \eqref{bd:ginterufromut}:
$$
\| \tilde w_k(0)\|_{\dot H^1_{R_k}}\lesssim \| ( \tilde w_{k,0},\tilde w_{k,1}) \|_{\mathcal H}\lesssim \langle \log \frac{R_k}{\langle R \rangle} \rangle q.
$$
Hence, using Hardy, and then that $\tilde w_k(0)=w_k(0)$ for all $4R_k\leq r \leq 8R_k$:
\be \label{id:L2toH1bornesinterwk}
\| \nabla w_k(0)\|_{L^2(4R_k\leq r\leq 8R_k))}+R_k^{-1}\| w_k(0)\|_{L^2(4R_k\leq r\leq 8R_k))} \lesssim  \langle \log \frac{R_k}{\langle R\rangle} \rangle q.
\ee
Using the definition of the $Z_{-2}$ norm, \eqref{id:L2toH1bornesinterwk} and \eqref{id:L2toH1idinteru} we obtain:
$$
\| d_k \Lambda W+e_k \Gamma\|_{L^2(4R_k\leq r \leq 8R_k)}\lesssim R_k \langle \log \frac{R_k}{\langle R\rangle} \rangle q ,
$$
and so using \eqref{id:asymptoticGamma}: $\| d_k \Lambda W\|_{L^2(4R_k\leq r \leq 8R_k)}+\| e_k \Gamma  \|_{L^2(4R_k\leq r \leq 8R_k)} \lesssim R_k \langle \log \frac{R_k}{\langle R\rangle} \rangle q , $
so that, using \eqref{id:asymptoticparGamma}:
$$
\| \nabla d_k \Lambda W\|_{L^2(4R_k\leq r \leq 8R_k)}+\|\nabla  e_k \Gamma  \|_{L^2(4R_k\leq r \leq 8R_k)} \lesssim  \langle \log \frac{R_k}{\langle R\rangle} \rangle q.
$$
Injecting the above inequality and \eqref{id:L2toH1bornesinterwk} in \eqref{id:L2toH1idinteru} one obtains:
$$
\| \nabla u_0 \|_{L^2(4R_k\leq r \leq 8R_k)}\lesssim \langle \log \frac{R_k}{\langle R\rangle} \rangle q.
$$
This implies the desired result of the Lemma, as $4R_{k_0}\leq 24R$.

\end{proof}

The proofs of the main results of this subsection are complete. Indeed,
Lemma \ref{L:channelsevenunsoliton2} follows from Lemmas \ref{L:channelsoddunsoliton}, 
 \ref{L:estimateufromutunsoliton} and \ref{lem:estimateufromutunsoliton2}.
Theorem \ref{th:channelsunsoliton} is a direct consequence of Lemmas \ref{L:channelsoddunsoliton} and \ref{L:channelsevenunsoliton}.

\section{Channels of energy close to a multisoliton} \label{subsec:channels_multi}

First, note that the solution $u$ of \eqref{id:linearmultisoliton} is globally well-posed in $\HHH$: the local well-posedness can be proved by Strichartz estimates \eqref{CK1} and the fact that $W$ is in $L^4$, and the global well-posedness follows from linearity. The exterior energy \eqref{id:exteriorenergy} is well-defined, as an application of Lemma \ref{lem:linearmultiout} using finite speed of propagation.

We divide the proof of Proposition \ref{pr:channels} into two parts. In \S \ref{Sub:odd} (see Lemma \ref{lem:channelsodd}), we prove the result for odd in time solutions. In \S \ref{Sub:even} we consider even in time solutions (see Lemma \ref{L:channelseven}). 
 
We start by some technical preliminaries on the equation   \eqref{id:linearmultisoliton} and on the spaces $Z_{\alpha,\lambdabf}$ that will be needed in the proof.
\subsection{Preliminaries}
\begin{lemma} \label{lem:linearmultiout}
For all $J\in \mathbb N$, a constant $C>0$ exists such that for all $\lambdabf \in \Lambda_J$ and $R\geq 0$, if $u$ solves on $\mathbb R^{1+6}$:
$$
\Box u+V_{\lambdabf}u=f, \qquad \vec u(0)\in \mathcal H
$$
where $f\in L^1_tL^2_x(\mathbb R\times \mathbb R^6)$ then:
$$
\sup_{t\in \mathbb R} \| \vec u(t)\|_{\mathcal H_{R+|t|}}+\| u\|_{L^2_tL^4(r\geq R+|t|)}\leq C\left(\| \vec u(0)\|_{\mathcal H_R}+\| f\|_{L^1_tL^2_x(r\geq R+|t|)} \right).
$$

\end{lemma}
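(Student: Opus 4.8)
The plan is to treat $V_{\lambdabf}u$ as a forcing term and to iterate the free exterior-cone Strichartz estimate of Lemma~\ref{L:CK1.3} over finitely many time intervals, the number of intervals being controlled \emph{uniformly in} $\lambdabf$. The key point is a scale invariance available in dimension $6$: for any time-independent $h$, the change of variables $y=x/\lambda$ gives $\|h_{(\lambda)}\|_{L^2_tL^4_x(r\geq|t|)}=\|h\|_{L^2_tL^4_x(r\geq|t|)}$, since $f\mapsto f_{(\lambda)}$ is the $\dot H^1$-critical rescaling. Because $W(x)\approx|x|^{-4}$ as $|x|\to\infty$ one has $W\in L^4(\mathbb R^6)$ and $\|W\|_{L^4(r\geq|t|)}\approx\langle t\rangle^{-5/2}$, so $V=-2W\in L^2_tL^4_x(r\geq|t|)$ (already used in the proof of Lemma~\ref{L:rigiditylinearoddintime}); hence for every $\lambdabf\in\Lambda_J$ and every $R\geq 0$,
\[
\|V_{\lambdabf}\|_{L^2_tL^4_x(r\geq R+|t|)}\ \leq\ \sum_{j=1}^J\|V_{(\lambda_j)}\|_{L^2_tL^4_x(r\geq|t|)}\ =\ J\,\|V\|_{L^2_tL^4_x(r\geq|t|)}\ =:\ \Theta(J),
\]
a bound independent of $\lambdabf$ and of $R$.

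Let $C_0$ denote the implicit constant in Lemma~\ref{L:CK1.3} and put $\delta=1/(2C_0)$. Since the squares of $L^2_t$-norms add over a partition of the time axis and the total is at most $\Theta(J)^2$, one may split $[0,\infty)$ and $(-\infty,0]$, each into at most $M(J):=\lceil(\Theta(J)/\delta)^2\rceil+1$ consecutive intervals with $0$ a common endpoint, so that on each such interval $I$ one has $\|V_{\lambdabf}\|_{L^2_{t\in I}L^4_x(r\geq R+|t|)}\leq\delta$. If $I\subset[0,\infty)$ has left endpoint $t_0$, then with $R_0=R+t_0$ the set $\{\bar t\in I,\ \bar r>R+|\bar t|\}$ is exactly the portion over $I$ of the exterior cone $\Ce_{t_0,R_0}$, and $R_0+|\bar t-t_0|=R+|\bar t|$ on $I$; symmetrically for $I\subset(-\infty,0]$ with $t_0$ its right endpoint. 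Writing $N(I)=\sup_{\bar t\in I}\|\vec u(\bar t)\|_{\mathcal H_{R+|\bar t|}}+\|u\|_{L^2_{\bar t\in I}L^4(\bar r>R+|\bar t|)}$, I apply Lemma~\ref{L:CK1.3} on $\Ce_{t_0,R_0}$ to the auxiliary solution with forcing $(f-V_{\lambdabf}u)\indic_{\{\bar t\in I\}}$ and data at $t_0$ given by any $\mathcal H$-extension of $\vec u(t_0)$ restricted to $\{r>R_0\}$ — which, by finite speed of propagation, coincides with $u$ on the slab — and restrict the globally valid conclusion to $\bar t\in I$; with Hölder in $t$ and $x$ this gives
\[
N(I)\ \leq\ C_0\Big(\|\vec u(t_0)\|_{\mathcal H_{R_0}}+\|f\|_{L^1_{t\in I}L^2(r>R+|t|)}\Big)+C_0\,\delta\,N(I),
\]
and since $C_0\delta=\tfrac12$ the last term is absorbed: $N(I)\leq 2C_0\big(\|\vec u(t_0)\|_{\mathcal H_{R_0}}+\|f\|_{L^1_{t\in I}L^2(r>R+|t|)}\big)$.

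For the two innermost intervals $\|\vec u(t_0)\|_{\mathcal H_{R_0}}=\|\vec u(0)\|_{\mathcal H_R}$, and for any other interval $\|\vec u(t_0)\|_{\mathcal H_{R_0}}\leq N(I')$ with $I'$ the preceding one, so iterating outward gives $N(I)\leq(2C_0)^{M(J)}\big(\|\vec u(0)\|_{\mathcal H_R}+\|f\|_{L^1_tL^2(r>R+|t|)}\big)$ for every interval $I$ (taking $C_0\geq 1$ without loss); summing over the at most $2M(J)$ intervals gives the stated estimate with a constant $C$ depending only on $J$ (explicitly, $C=2M(J)(2C_0)^{M(J)}$ works). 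One point requires care: the absorption step presupposes $N(I)<\infty$, which holds on bounded intervals by the local theory recalled at the start of the section but not obviously on the two unbounded outermost ones; this is handled in the standard way, by first running the argument with $[0,\infty)$ and $(-\infty,0]$ replaced by $[0,T]$ and $[-T,0]$ (all subintervals then bounded, so all $N(I)$ finite), obtaining a bound uniform in $T$, and then letting $T\to\infty$ by monotone convergence. The only substantial step is the scale invariance of the first paragraph: it is precisely what makes the number of time intervals, and hence the final constant, independent of $\lambdabf$, which is the uniformity asserted in the lemma; everything else is the routine Strichartz bootstrap.
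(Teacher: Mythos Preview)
Your argument is correct and is precisely the standard Strichartz bootstrap that the paper has in mind: the paper omits the proof entirely, pointing to \cite[Lemma~2.8]{DuKeMe19Pb}, whose proof proceeds exactly by partitioning the time axis into finitely many intervals on which the potential has small $L^2_tL^4_x$ norm on the exterior cone, applying the free exterior Strichartz estimate (Lemma~\ref{L:CK1.3} here) on each, absorbing the potential term by H\"older, and iterating. Your identification of the scale invariance $\|V_{(\lambda)}\|_{L^2_tL^4_x(r>|t|)}=\|V\|_{L^2_tL^4_x(r>|t|)}$ as the reason the number of intervals---and hence the constant---depends only on $J$ is exactly the crux of the matter.
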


\begin{proof}

The proof is the same as that of Lemma 2.8 in \cite{DuKeMe19Pb}, we omit it.

\end{proof}

\begin{claim}
\label{claimZ}
 Let $f\in\dot{H}^1$, radial. Then $|\nabla f|\in  Z_{-3,\lambdabf}$ with
 \begin{equation}
\label{embeddingH1Z-3}
 \| \nabla f \|_{Z_{-3,\lambdabf}}\leq  \| f\|_{\dot H^1},
 \end{equation}
and
 \begin{equation}
\label{Sobolev_Z}
|f(r)|\lesssim \frac{1}{r^2}\left( 1+\left|\min_{1\leq j\leq J}\log\left(\frac{r}{\lambda_j}\right)\right| \right)\|\nabla f\|_{Z_{-3,\lambdabf}}.
 \end{equation} 
 Furthermore, assume that $\chi^0$ and $\chi^1$ are two cut-off functions, with $\chi^0(r)=1$ for $r\leq 1$ and $\chi^0(r)=0$ for $r\geq 2$, and $\chi^1$ that is compactly supported outside the origin, and denote $\chi^i_R(r)=\chi^i(r/R)$ for $R>0$ and $i=0,1$, then we have:
 \begin{equation}
\label{localisingZ-3}
 \| \chi^1_Rf \|_{\dot H^1}\leq C  \left( 1+\left|\min_{1\leq j\leq J}\log\left(\frac{R}{\lambda_j}\right)\right|\right) \| \nabla f\|_{Z_{-3,\lambdabf}}
 \end{equation}
 and
  \begin{equation}
\label{localisingZ-32}
 \| \nabla (\chi^0_R f) \|_{Z_{-3,\lambdabf}}\leq C \| \nabla f\|_{Z_{-3,\lambdabf}}, \qquad  \| \nabla ((1-\chi^0_R) f) \|_{Z_{-3,\lambdabf}}\leq C  \| \indic_{\{|x|\geq R\}}\nabla f\|_{Z_{-3,\lambdabf}}
 \end{equation}
 for $C$ depending on $\chi^0$ and $\chi^1$ but independent of $R$.
\end{claim}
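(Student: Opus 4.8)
The plan is to establish the four inequalities in order, using the pointwise (Strauss--Hardy type) bound \eqref{Sobolev_Z} as the main tool for the other three. I abbreviate $m(r)=\min_{1\le j\le J}|\log(r/\lambda_j)|$, and record two elementary facts used throughout: the denominator $\inf_{1\le j\le J}\langle\log\frac{R}{\lambda_j}\rangle$ in the $Z_{-3,\lambdabf}$ norm equals $\langle m(R)\rangle$ and is $\ge 1$; and $m$ is $1$-Lipschitz in the logarithmic variable, i.e. $m(s)\le m(r)+|\log(s/r)|$ for all $r,s>0$. With this, \eqref{embeddingH1Z-3} is immediate: for $\alpha=-3$ the prefactor $R^{-3-\alpha}$ is $1$ and $\langle m(R)\rangle\ge1$, so $\|\nabla f\|_{Z_{-3,\lambdabf}}\le\sup_{R>0}\big(\int_{R<|x|<2R}|\nabla f|^2\big)^{1/2}\le\|f\|_{\dot H^1}$, with a constant independent of $J$ and $\lambdabf$.

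Next I prove \eqref{Sobolev_Z}. A radial $\dot H^1(\mathbb{R}^6)$ function vanishes at infinity, so $f(r)=-\int_r^\infty\partial_sf\,ds$ and $|f(r)|\le\sum_{k\ge0}\int_{2^kr}^{2^{k+1}r}|\partial_sf|\,ds$. On each dyadic piece, Cauchy--Schwarz against the weight $s^5$ bounds the integral by $\big(\int_{2^kr<|x|<2^{k+1}r}|\nabla f|^2\big)^{1/2}\big(\int_{2^kr}^{2^{k+1}r}s^{-5}\,ds\big)^{1/2}\lesssim\langle m(2^kr)\rangle\,(2^kr)^{-2}\,\|\nabla f\|_{Z_{-3,\lambdabf}}$, by the definition of the norm. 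Since $\langle m(2^kr)\rangle\le1+m(r)+k\log2\lesssim(1+m(r))(1+k)$, summing the convergent series $\sum_k(1+k)4^{-k}$ gives $|f(r)|\lesssim r^{-2}(1+m(r))\|\nabla f\|_{Z_{-3,\lambdabf}}$, which implies \eqref{Sobolev_Z} because $m(r)\le|\min_j\log(r/\lambda_j)|$. Running the same computation but starting the dyadic sum above level $R$ yields the tail version $|f(r)|\lesssim r^{-2}\langle m(r)\rangle\,\|\indic_{\{|x|\ge R\}}\nabla f\|_{Z_{-3,\lambdabf}}$ for all $r\ge R$, which I will use below.

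For \eqref{localisingZ-3}, expand $\nabla(\chi^1_Rf)=\chi^1_R\nabla f+f\nabla\chi^1_R$. Since $\chi^1$ is supported in a fixed annulus $\{a\le|x|\le b\}$, the set $\{aR\le|x|\le bR\}$ is covered by a number of dyadic annuli depending only on $b/a$, all at scale $\approx R$, on which $\int_\cdot|\nabla f|^2\le\langle m(\cdot)\rangle^2\|\nabla f\|_{Z_{-3,\lambdabf}}^2$ and $m(\cdot)=m(R)+O(1)$; hence $\|\chi^1_R\nabla f\|_{L^2}\lesssim\langle m(R)\rangle\|\nabla f\|_{Z_{-3,\lambdabf}}$. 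Using $|\nabla\chi^1_R|\lesssim R^{-1}$ on that annulus and \eqref{Sobolev_Z} (which gives $|f|\lesssim R^{-2}\langle m(R)\rangle\|\nabla f\|_{Z_{-3,\lambdabf}}$ there), one gets $\|f\nabla\chi^1_R\|_{L^2}^2\lesssim R^{-2}\cdot R^6\cdot R^{-4}\langle m(R)\rangle^2\|\nabla f\|_{Z_{-3,\lambdabf}}^2$. Both terms are thus $\lesssim\langle m(R)\rangle\|\nabla f\|_{Z_{-3,\lambdabf}}\le(1+|\min_j\log(R/\lambda_j)|)\|\nabla f\|_{Z_{-3,\lambdabf}}$. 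For \eqref{localisingZ-32} I again use the Leibniz rule. The pointwise bounds $|\chi^0_R|\le1$ and $1-\chi^0_R\equiv0$ on $\{|x|\le R\}$ give directly $\|\chi^0_R\nabla f\|_{Z_{-3,\lambdabf}}\le\|\nabla f\|_{Z_{-3,\lambdabf}}$ and $\|(1-\chi^0_R)\nabla f\|_{Z_{-3,\lambdabf}}\le\|\indic_{\{|x|\ge R\}}\nabla f\|_{Z_{-3,\lambdabf}}$. The remaining term $f\nabla\chi^0_R$ is supported in $\{R\le|x|\le2R\}$ with $|\nabla\chi^0_R|\lesssim R^{-1}$, so only scales $\rho\approx R$ enter its $Z_{-3,\lambdabf}$ norm; there the denominator $\langle m(\rho)\rangle\approx\langle m(R)\rangle$ absorbs exactly the logarithmic factor produced by \eqref{Sobolev_Z} (respectively by its tail version), so $\|f\nabla\chi^0_R\|_{Z_{-3,\lambdabf}}\lesssim\|\nabla f\|_{Z_{-3,\lambdabf}}$ (respectively $\lesssim\|\indic_{\{|x|\ge R\}}\nabla f\|_{Z_{-3,\lambdabf}}$). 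The constants in the localisation estimates depend only on $\chi^0,\chi^1$ and are uniform in $R$, $J$, $\lambdabf$.

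The argument is entirely soft; the only place requiring attention is the bookkeeping with the position-dependent logarithmic weight: one must check that $\langle m(2^kr)\rangle$ grows slowly enough (it does, linearly in $k$) to leave $\sum_k(1+k)4^{-k}$ convergent, and that in \eqref{localisingZ-32} the weight $\langle m(\rho)\rangle$ in the $Z_{-3,\lambdabf}$ norm at scale $\rho\approx R$ matches the logarithmic loss coming from the Sobolev bound. Recording that $m$ is $1$-Lipschitz in $\log r$ (hence varies by $O(1)$ over a bounded number of dyadic scales) makes both points routine; it is also worth keeping in mind that the quantity $|\min_j\log(r/\lambda_j)|$ appearing in the statements is a priori larger than the natural quantity $m(r)=\min_j|\log(r/\lambda_j)|$ produced by the proof, which is harmless since it only weakens the inequalities.
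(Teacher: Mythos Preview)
Your proof is correct and follows essentially the same approach as the paper: the trivial embedding for \eqref{embeddingH1Z-3}, the dyadic-plus-Cauchy--Schwarz argument for \eqref{Sobolev_Z}, and the Leibniz rule combined with \eqref{Sobolev_Z} for the localisation estimates. You are in fact more thorough than the paper, which omits the proof of \eqref{localisingZ-32} entirely; your tail version of the pointwise bound and your remark that $|\min_j\log(r/\lambda_j)|\ge\min_j|\log(r/\lambda_j)|$ are both useful clarifications that the paper leaves implicit.
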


\begin{proof}
The inequality \eqref{embeddingH1Z-3} is a direct consequence of the inequality $\| \nabla f\|_{Z_{-3,\lambdabf}}\leq \| \nabla f\|_{L^2}$.

Next, we have 
 \begin{multline*}
  |f(r)|\lesssim \sum_{k\geq 0} \int_{2^kr}^{2^{k+1}r} \left|\frac{\partial f}{\partial\rho} \right|d\rho\lesssim \sum_{k\geq 0} \frac{1}{2^{2k}r^2}\left(\int_{2^kr}^{2^{k+1}r} \left|\frac{\partial f}{\partial\rho}\right|^2\rho^5d\rho\right)^{1/2}\\
  \lesssim \sum_{k\geq 0}\frac{1}{(2^kr)^2} \|\nabla f\|_{Z_{-3,\lambdabf}}\left( 1+\min_{j\in \llbracket 1,J\rrbracket} |\log (2^kr/\lambda_j)|\right)\\
  \lesssim \sum_{k\geq 0}\frac{1}{(2^kr)^2} \|\nabla f\|_{Z_{-3,\lambdabf}}\left( k+\min_{j\in \llbracket 1,J\rrbracket} |\log (r/\lambda_j)|\right),
 \end{multline*}
which yields \eqref{Sobolev_Z}. Then, using Leibniz, \eqref{Sobolev_Z}, and the compact support of $\chi^1$ outside the origin, we infer:
 \begin{multline*}
\| \chi_R^1 f\|_{\dot H^1}\lesssim \| (\nabla \chi_R^1) f\|_{L^2}+\| \chi_R^1 \nabla f \|_{L^2}\\
\lesssim \left\| r^{-2}\nabla \chi_R^1\right\|_{L^2} \left( 1+\left|\min_{1\leq j\leq J}\log\left(\frac{R}{\lambda_j}\right)\right|\right) \| \nabla f\|_{Z_{-3,\lambdabf}}+ \left( 1+\left|\min_{1\leq j\leq J}\log\left(\frac{R}{\lambda_j}\right)\right|\right) \| \nabla f\|_{Z_{-3,\lambdabf}}\\
\lesssim \left( 1+\left|\min_{1\leq j\leq J}\log\left(\frac{R}{\lambda_j}\right)\right|\right) \| \nabla f\|_{Z_{-3,\lambdabf}},
 \end{multline*}
which shows \eqref{localisingZ-3}. The proof of \eqref{localisingZ-32} is similar and we omit it.

\end{proof}

\begin{lemma}

There exists $C>0$ such that, for any $\lambda>0$, for any $u_0\in \dot H^1$, the solution $u$ to $\pa_t^2 u-\Delta u+V_{(\lambda)}u$ with initial data $(u(0),\pa_t u(0))=(u_0,0)$ satisfies:
\begin{equation} \label{interactionsolitonZ-31}
\| Vu\|_{L^1L^2(r>|t|)}\leq C \| \nabla u_0\|_{Z_{-3}}.
\end{equation}
If moreover $u_0(r)=0$ for all $r\geq R$, for some $R\leq 1$, then:
\begin{equation} \label{interactionsolitonZ-32}
\| Vu\|_{L^1L^2(r>|t|)}\leq CR^{\frac 18} \| \nabla u_0\|_{Z_{-3}}.
\end{equation}
If moreover, $u_0(r)=0$ for all $r\leq R$, for some $R\geq 1$, then:
\begin{equation} \label{interactionsolitonZ-33}
\| Vu\|_{L^1L^2(r>|t|)}\leq CR^{-1} \| \nabla u_0\|_{Z_{-3}}.
\end{equation}

\end{lemma}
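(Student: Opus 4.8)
The plan is to estimate $\|Vu\|_{L^1L^2(r>|t|)}$ by splitting both the time integration and the spatial support of $V$ into dyadic scales relative to $\lambda$. Recall $V=-2W$ in $N=6$ with $W(x)\approx \langle |x|\rangle^{-4}$, so $V_{(\lambda)}(x)=\lambda^{-2}V(x/\lambda)$ satisfies $|V_{(\lambda)}(x)|\lesssim \lambda^{2}(\lambda^2+|x|^2)^{-2}$; in particular $V_{(\lambda)}\in L^2_tL^4_x(r\geq |t|)$ with norm independent of $\lambda$ by scaling, which is the fact already used in Lemma~\ref{L:rigiditylinearoddintime}. First I would invoke the energy/Strichartz bound of Lemma~\ref{lem:linearmultiout} (with $R=0$, single soliton) together with a bootstrap: writing $u=u_F+$ Duhamel term where $u_F$ is the free evolution of $(u_0,0)$, one gets $\|u\|_{L^2_tL^4_x(r\geq|t|)}\lesssim \|u_0\|_{\dot H^1}$, hence $\|V_{(\lambda)}u\|_{L^1L^2(r>|t|)}\lesssim \|V_{(\lambda)}\|_{L^2L^4(r\geq|t|)}\|u\|_{L^2L^4(r\geq|t|)}\lesssim \|u_0\|_{\dot H^1}$. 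To upgrade $\|u_0\|_{\dot H^1}$ to $\|\nabla u_0\|_{Z_{-3}}$ in \eqref{interactionsolitonZ-31}, I would not use Strichartz globally but instead estimate directly: on the cone $r>|t|$, pointwise control of $u$ follows from $u_F$ plus the Duhamel correction, and $\|u_F(t)\|_{L^\infty(r>|t|)}$ or better the weighted $L^2$ of $u_F$ is controlled by $\|\nabla u_0\|_{Z_{-3}}$ via \eqref{Sobolev_Z} and finite speed of propagation (only $u_0$ restricted to $r>|t|$ matters). Concretely, split $r>|t|$ into annuli $\{2^k \le r < 2^{k+1}\}$, $k\in\ZZ$: on each annulus $\|V_{(\lambda)}\|_{L^2_t L^4_x}\lesssim \min(2^{2k}\lambda^{-2},\ 2^{-2k}\lambda^{2})$ (decay as $r\to\infty$ and as $r\to 0$), while $\|u\|_{L^2_tL^4_x}$ on that annulus is $\lesssim 2^{k}\langle k-\log\lambda\rangle\|\nabla u_0\|_{Z_{-3}}$ roughly speaking, using \eqref{Sobolev_Z}; summing the geometric-times-polynomial series in $k$ converges and yields \eqref{interactionsolitonZ-31}.

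For \eqref{interactionsolitonZ-32}, the extra hypothesis $u_0\equiv 0$ for $r\geq R$ with $R\leq 1$ means, by finite speed of propagation, that on the cone $r>|t|$ the solution $u$ is supported (up to the potential-generated tail, which is itself small) near $r\lesssim R+|t|$ — more precisely, one gains because the free part $u_F$ vanishes for $r>|t|$ outside $r<R+|t|$, actually $u_F$ has support in $\{||x|-|t||< R\}$ roughly, so on the outer cone the interaction is restricted to a thin region. I would exploit that $\|\nabla u_0\|_{Z_{-3}}$ together with the support condition gives, by Hardy-type / rescaling, an improved weighted bound with a factor $R^{1/8}$ (the exponent $1/8$ is chosen generously so as to leave room in the dyadic sum; any small positive power would do). The mechanism: restrict the $k$-sum to $2^k\lesssim R$ for the part where $u_F$ lives, pick up a factor like $R^{2}\lambda^{-2}$ from $\|V_{(\lambda)}\|$ on small balls, and absorb the remaining tail (from the Duhamel term, which lives on $r>|t|$ but is generated by a small source) by the bootstrap of Lemma~\ref{lem:linearmultiout}; balancing the two contributions produces a positive power of $R$. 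Similarly for \eqref{interactionsolitonZ-33}, with $u_0\equiv 0$ for $r\leq R$, $R\geq 1$: now $u_F$ on the cone is supported in $r>R-|t|$ hence for the interaction with $V_{(\lambda)}$ (which decays like $\lambda^{2}r^{-4}$) one restricts to $r\gtrsim R$, gaining a genuine factor $R^{-1}$ from $\|V_{(\lambda)}\|_{L^2L^4(r\gtrsim R)}\lesssim \lambda R^{-1}$-type decay against the linear-in-$R$ growth of the $Z_{-3}\to L^2$ Sobolev bound \eqref{Sobolev_Z}, the net being $R^{-1}$.

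The main obstacle is the bootstrap needed to control the Duhamel (potential-generated) tail of $u$ on the exterior cone, since $u$ itself is not compactly supported even when $u_0$ is: one must show that this tail does not destroy the $R^{1/8}$ or $R^{-1}$ gain. I would handle this by running a fixed-point/continuity argument on the quantity $\|u\|_{L^2L^4(r>|t|)} + $ (weighted norm), closing it because $\|V_{(\lambda)}\|_{L^2L^4(r\geq|t|)}$, while $O(1)$, can be localized: on the region relevant to the gain it is much smaller, and on the complementary region $u$ is controlled by the already-established \eqref{interactionsolitonZ-31} times a small factor coming from the support restriction. A secondary technical point is the convergence of the dyadic sums with the logarithmic weights from the $Z_{-3,\lambdabf}$ norm; these are harmless since each log factor is beaten by the geometric decay $2^{\pm 2k}$ coming from $V_{(\lambda)}$, but one must be careful near $k\approx \log\lambda$ where the potential is not small — there one simply uses the crude bound and the fact that this is a single dyadic scale. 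Throughout, scaling reduces everything to $\lambda=1$ for \eqref{interactionsolitonZ-31} (the $Z_{-3}$ norm is scale-invariant up to the log, which is why no $\lambda$ appears on the right), while for \eqref{interactionsolitonZ-32}–\eqref{interactionsolitonZ-33} the interplay of the $R$-support and the $\lambda$-scale of the potential must be tracked, but after rescaling by $\lambda$ the conditions become $u_0$ supported in $r\geq R/\lambda \geq 1$ or $r\le R/\lambda\le 1$ only in the favorable regime, and the unfavorable regime $R\approx \lambda$ is again covered by the $O(1)$ bound \eqref{interactionsolitonZ-31} since then $R^{\pm 1}\approx 1$ up to constants absorbed in $C$.
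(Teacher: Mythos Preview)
Your approach has a genuine gap. You want to split the cone $\{r>|t|\}$ into spatial annuli $\{2^k\le r<2^{k+1}\}$ and estimate $\|u\|_{L^2_tL^4_x}$ on each annulus by (roughly) $\langle k\rangle\|\nabla u_0\|_{Z_{-3}}$ via the pointwise bound \eqref{Sobolev_Z}. But \eqref{Sobolev_Z} controls $|f(r)|$ in terms of $\|\nabla f\|_{Z_{-3}}$, and you only know $\|\nabla u_0\|_{Z_{-3}}$, not $\|\nabla u(t)\|_{Z_{-3}}$ for $t\neq 0$. The $Z_{-3}$ norm is \emph{not} propagated by the flow (free or with potential): it is a weighted supremum over dyadic shells, and the wave propagator mixes scales. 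You sense this and propose to split $u=u_F+\text{(Duhamel)}$ and bootstrap the tail, but that is precisely where the argument would break: the Duhamel piece is controlled only in energy, it is not spacetime-localised, and closing a bootstrap in a $Z_{-3}$-type norm for the full $u$ is the hard point you have not addressed.

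The paper sidesteps this entirely by decomposing the \emph{initial data} rather than the spacetime region. Write $u_0=\sum_{\ell\in\ZZ} u_{\ell,0}$ with $u_{\ell,0}=\chi(\cdot/2^{\ell})u_0$ a dyadic partition, and let $u_\ell$ be the solution of the \emph{full} equation $\partial_t^2u_\ell-\Delta u_\ell+V_{(\lambda)}u_\ell=0$ with data $(u_{\ell,0},0)$. The localisation estimate \eqref{localisingZ-3} gives $\|u_{\ell,0}\|_{\dot H^1}\lesssim (1+|\ell|)\|\nabla u_0\|_{Z_{-3}}$, hence by Lemma~\ref{lem:linearmultiout} also $\|u_\ell\|_{L^2L^4(r>|t|)}\lesssim (1+|\ell|)\|\nabla u_0\|_{Z_{-3}}$. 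The key observation is that $u_\ell$, being a solution of a linear wave equation with smooth potential and annulus-supported data, is by finite speed of propagation supported in the strip $S_\ell=\{\max(0,2^{\ell-1}-|t|)\le r\le 2^{\ell+1}+|t|\}$; there is no tail to bootstrap. Then H\"older gives $\|Vu_\ell\|_{L^1L^2(r>|t|)}\le \|V\|_{L^2L^4(S_\ell\cap\{r>|t|\})}\,\|u_\ell\|_{L^2L^4(r>|t|)}$, and the first factor is $\lesssim 2^{\ell/4}$ for $\ell\le 0$ and $\lesssim 2^{-2\ell}$ for $\ell\ge 1$ (Appendix~\ref{A:estimates}). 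The geometric decay beats $(1+|\ell|)$ and the sum converges, giving \eqref{interactionsolitonZ-31}. The support-restricted estimates \eqref{interactionsolitonZ-32}--\eqref{interactionsolitonZ-33} then come for free: the support condition on $u_0$ simply truncates the $\ell$-sum to $\ell\le\ell_0$ (resp.\ $\ell\ge\ell_0$), and the remaining geometric tail yields the factor $R^{1/8}$ (resp.\ $R^{-1}$).
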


\begin{proof}

We use a dyadic partition of unity $1=\sum_{\ell\in \mathbb Z} \chi_{2^{\ell}}(r)$, with $\textup{supp}(\chi)\subset [\frac 12,2]$. We decompose $ u=\sum_{\ell\in \mathbb Z} u_{\ell}$, where $u_{\ell}$ is the solution to $\pa_t^2 u_{\ell}-\Delta u_{\ell}+V_{(\lambda)}u_{\ell}=0$ with initial data $(u_{\ell}(0),\pa_t u_{\ell}(0))=(u_{\ell,0},0)$, where $u_{\ell,0}=\chi_{2^{\ell}}u_0$. By \eqref{localisingZ-3} (with $J=1$, $\lambda_1=1$):
$$
\| u_{\ell,0}\|_{\dot H^1}\lesssim (1+|\ell|) \| \nabla u_0\|_{Z_{-3}}.
$$
Since $\textup{supp}(u_{\ell,0})\subset [2^{\ell-1},2^{\ell+1}]$, we have by finite speed of propagation that $\textup{supp}(u_{\ell,0})\subset \Big\{\max(0,2^{\ell-1}-|t|\leq r \leq 2^{\ell+1}+|t|\Big\}$. Using H\"older and Lemma \ref{lem:linearmultiout}, we deduce:
$$
\| V u_{\ell}\|_{L^1L^2(r\geq |t|)}\lesssim  (1+|\ell|)\| V\|_{L^2_tL^4_x(S_{\ell})} \| \nabla u_0\|_{Z_{-3}}
$$
where we introduced the set $S_{\ell}=\Big\{\max(|t|,2^{\ell-1}-|t|)\leq r \leq 2^{\ell+1}+|t|\Big\}$. If $\ell\leq 0$ then:
$$
\| V\|_{L^2_tL^4_x(S_{\ell})}\leq \| V\|_{L^2_tL^4_x(|t|\leq r \leq 2^{\ell+1}+ |t|)}\lesssim 2^{\frac \ell4}
$$
by \eqref{estim5}, while if $\ell\geq 1$ then, since $\max(|t|,2^{\ell-1}-|t|)\geq \max (|t|,2^{\ell-2})$:
$$
\| V\|_{L^2_tL^4_x(S_{\ell})}\leq \| V\|_{L^2_tL^4_x(r\geq \max(|t|,2^{l-2}))}\lesssim 2^{-2\ell}
$$
by \eqref{estim6}. Summing, we get
\begin{multline*}
\| V u\|_{L^1L^2(r\geq |t|)}\lesssim \sum_{\ell\in Z} \| V u_{\ell}\|_{L^1L^2(r\geq |t|)} \\
\lesssim   \| \nabla u_0\|_{Z_{-3}}\left(\sum_{\ell\leq 0} (1+|\ell|)2^{\frac l4}+\sum_{\ell\geq 1}(1+\ell)2^{-2\ell} \right)\lesssim \| \nabla u_0\|_{Z_{-3}}.
\end{multline*}
This is \eqref{interactionsolitonZ-31}. If in addition $u_0(r)=0$ for all $r\geq R$ for some $R\leq 1$, then, introducing $\ell_0$ the only integer such that $2^{\ell_0-1}< R\leq 2^{\ell_0}$, we have $u_{\ell}=0$ for $\ell\geq \ell_0+1$ and so:
$$
\| V u\|_{L^1L^2(r\geq |t|)}\lesssim   \| \nabla u_0\|_{Z_{-3}}\sum_{\ell\leq \ell_0} (1+|\ell|)2^{\frac \ell 4}\lesssim 2^{\frac{\ell_0}{8}}\| \nabla u_0\|_{Z_{-3}}\lesssim R^{\frac{1}{8}}\| \nabla u_0\|_{Z_{-3}},
$$
which establishes \eqref{interactionsolitonZ-32}. The proof of \eqref{interactionsolitonZ-33} is similar and we omit it.

\end{proof}

\begin{lemma} \label{lem:approxresonances}
For any $J\in \mathbb N$, there exists $\gamma^*>0$ such that the following holds for all $\lambdabf \in \Lambda_J$ with $\gamma(\lambdabf)\leq \gamma^*$. For $1\leq j \leq J$ denote by $\phi_j$ the solution to \eqref{id:linearmultisoliton} with data $(0,(\Lambda W)_{[\lambda_j]})$, and by $\psi_j$ the solution to \eqref{id:linearmultisoliton} with data $((\Lambda W)_{(\lambda_j)},0)$.

Then for any $1\leq j \leq J$, 
\be \label{bd:phij}
 \sup_{t\in \mathbb R}\| \vec{\phi}_j(t)-(t\Lambda W_{[\lambda_j]},\Lambda W_{[\lambda_j]})\|_{\mathcal H_{|t|}}\lesssim \gamma(\lambdabf),
 \ee
 and
 \be \label{bd:psij}
 \sup_{t\in \mathbb R}\| \vec{\psi}_j(t)-(\Lambda W_{(\lambda_j)},0)\|_{\mathcal H_{|t|}}\lesssim \gamma(\lambdabf)^2|\log(\gamma(\lambdabf))|.
 \ee
\end{lemma}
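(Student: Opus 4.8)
The plan is to treat $\phi_j$ and $\psi_j$ as perturbations of the explicit non-radiative solutions of the single-soliton linearised equation and to control the error generated by the ``extra'' potentials $V_{(\lambda_i)}$, $i\neq j$, using the weighted Strichartz estimate of Lemma \ref{lem:linearmultiout} together with the decay of $V$. First I would record that the exact identities from \eqref{id:LambdaWGammazeroes} give $(\partial_t^2-\Delta+V_{(\lambda_j)})(t\,(\Lambda W)_{[\lambda_j]})=0$ and $(\partial_t^2-\Delta+V_{(\lambda_j)})(\Lambda W)_{(\lambda_j)}=0$; hence the errors
$$
e_j=\phi_j-t\,(\Lambda W)_{[\lambda_j]},\qquad \tilde e_j=\psi_j-(\Lambda W)_{(\lambda_j)}
$$
solve \eqref{id:linearmultisoliton} with zero data and source term $-\big(\sum_{i\neq j}V_{(\lambda_i)}\big)\cdot\big(t\,(\Lambda W)_{[\lambda_j]}\big)$, respectively $-\big(\sum_{i\neq j}V_{(\lambda_i)}\big)\cdot(\Lambda W)_{(\lambda_j)}$. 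By Lemma \ref{lem:linearmultiout} with $R=0$ it then suffices to bound these source terms in $L^1_tL^2_x(r\geq|t|)$.

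Next I would estimate the source. By scaling it is enough to understand, for $i\neq j$, the quantity $\big\|V_{(\lambda_i)}\,(\Lambda W)_{[\lambda_j]}\big\|_{L^1_tL^2_x(r\geq|t|)}$ (with an extra factor $|t|$, which is harmless on $r\geq|t|$ since $(\Lambda W)_{[\lambda_j]}$ decays like $r^{-5}$ for $r\gg\lambda_j$, so $|t|\,(\Lambda W)_{[\lambda_j]}$ is still integrable in the relevant regions) and similarly with $(\Lambda W)_{(\lambda_j)}$. One splits the cone $\{r\geq|t|\}$ dyadically in $r$ and uses: on the region $r\lesssim\lambda_j$ the factor $(\Lambda W)_{[\lambda_j]}$ is bounded by $\lambda_j^{-3}$, while $V_{(\lambda_i)}$ is either of size $\lambda_i^{-2}$ (if $r\lesssim\lambda_i$) or decays like $\lambda_i^2 r^{-4}$; on $r\gtrsim\lambda_j$ one uses the decay of $(\Lambda W)_{[\lambda_j]}$. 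Because $\lambda_i/\lambda_j\leq\gamma(\lambdabf)$ when $i>j$ and $\lambda_j/\lambda_i\leq\gamma(\lambdabf)$ when $i<j$, every term carries a power of the ratio $\min(\lambda_i/\lambda_j,\lambda_j/\lambda_i)\leq\gamma(\lambdabf)$. Summing the at most $J-1$ contributions and the geometric series in the dyadic parameter gives a bound $\lesssim\gamma(\lambdabf)$ for $e_j$. For $\tilde e_j$ the same scheme applies, but now both $(\Lambda W)_{(\lambda_j)}$ and $\Gamma_{(\lambda_j)}$-type tails enter (the source is $L^2$ but only logarithmically so at the scale $\lambda_j$), which is precisely the origin of the extra $|\log(\gamma(\lambdabf))|$: the borderline dyadic sum near $r\sim\lambda_j$ produces $\gamma(\lambdabf)^2|\log\gamma(\lambdabf)|$ rather than $\gamma(\lambdabf)^2$. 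I would make this precise by isolating the scales $R_k$ with $\lambda_{j}\lesssim R_k\lesssim\lambda_{j-1}$ (or $\lambda_{j+1}\lesssim R_k\lesssim\lambda_j$) and using $\|(\Lambda W)_{(\lambda_j)}\|_{\dot H^1}\sim1$ together with the logarithmic weight in $Z_{-3,\lambdabf}$.

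Finally, I would upgrade the estimate on the sources to the stated $\mathcal H_{|t|}$-bounds on $\vec\phi_j$, $\vec\psi_j$ by a direct application of Lemma \ref{lem:linearmultiout}, which controls $\sup_t\|\vec e_j(t)\|_{\mathcal H_{|t|}}$ (and $\sup_t\|\vec{\tilde e}_j(t)\|_{\mathcal H_{|t|}}$) by the $L^1_tL^2_x(r\geq|t|)$ norm of the corresponding source. The main obstacle I expect is the bookkeeping in the second step: one must extract the optimal power of $\gamma(\lambdabf)$ uniformly in $j$ and in the relative position of $\lambda_i$ with respect to $\lambda_j$, keeping careful track of which dyadic regions contribute $\lambda_i/\lambda_j$ versus $\lambda_j/\lambda_i$, and — for \eqref{bd:psij} — correctly identifying that the logarithmic loss is confined to the single transition scale and therefore contributes only a factor $|\log\gamma(\lambdabf)|$, not a power. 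The smallness condition $\gamma(\lambdabf)\leq\gamma^*$ is used only to ensure the geometric sums converge and that $|\log\gamma(\lambdabf)|\gtrsim1$, so that $\gamma(\lambdabf)^2|\log\gamma(\lambdabf)|$ dominates all lower-order remainders.
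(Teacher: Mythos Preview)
Your plan is essentially the paper's proof: define the error $\bar\phi_j=\phi_j-t(\Lambda W)_{[\lambda_j]}$ (and similarly $\bar\psi_j$), observe it solves \eqref{id:linearmultisoliton} with zero data and source $-\sum_{i\neq j}V_{(\lambda_i)}\,t(\Lambda W)_{[\lambda_j]}$ (resp.\ $-\sum_{i\neq j}V_{(\lambda_i)}(\Lambda W)_{(\lambda_j)}$), then bound this source in $L^1_tL^2_x(r\geq|t|)$ and apply Lemma \ref{lem:linearmultiout}. The paper normalises $\lambda_j=1$ and quotes the Appendix estimates on $\|W_{(\lambda)}\|_{L^2L^4}$ over strips of the cone (H\"older $L^2L^4\times L^2L^4\to L^1L^2$) to get, for $i>j$, $\|t\Lambda W\,V_{(\lambda_i)}\|_{L^1L^2(r>|t|)}\lesssim\lambda_i^2$ and $\|\Lambda W\,V_{(\lambda_i)}\|_{L^1L^2(r>|t|)}\lesssim\lambda_i^2|\log\lambda_i|$, and symmetrically for $i<j$; this is exactly your dyadic scheme.

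One small correction: your explanation of the logarithm in \eqref{bd:psij} is off. There are no ``$\Gamma_{(\lambda_j)}$-type tails'' and the $Z_{-3,\lambdabf}$ norm plays no role here. The extra $|\log\gamma|$ comes purely from the fact that $\|\Lambda W\, V_{(\lambda_i)}\|_{L^1_tL^2_x(r>|t|)}$ is a borderline integral: after H\"older, one factor is $\|V_{(\lambda_i)}\|_{L^2L^4(r>|t|)}\sim 1$ independent of $\lambda_i$, and the other is $\|\Lambda W\|_{L^2L^4}$ restricted to the strip where $V_{(\lambda_i)}$ lives, which picks up a $|\log\lambda_i|$ from the $t$-integration near $|t|\sim\lambda_i$ (whereas with the extra factor $t$ in the $\phi_j$ case this integral is no longer borderline and gains a full power). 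Keep the bookkeeping at that level and the proof goes through cleanly.
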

\begin{proof}
 By scaling invariance, we can assume $\lambda_j=1$ and then $\lambda_{j+1}<1<\lambda_{j-1}$. We introduce $\bar \phi_j= \phi_j-t\Lambda W$ that solves:
\be \label{approx:eq:barphii}
\Box \bar \phi_j+V_{\lambdabf}\bar \phi_j=-\sum_{i\neq j}t  \Lambda W V_{(\lambda_i)}, \qquad (\bar \phi_j(0),\pa_t \bar \phi_j(0))=\vec 0.
\ee
By Appendix \ref{A:estimates}, for $i>j$,
$\| t\Lambda WV_{(\lambda_i)} \|_{L^1_tL^2_x(\{r\geq |t|)\}}\lesssim \lambda_i^2$, and for $i<j$, $\| t\Lambda WV_{(\lambda_i)} \|_{L^1_tL^2_x(\{r\geq |t|)\}}\lesssim \frac{1}{\lambda_{i}}$,
and Lemma \ref{lem:linearmultiout} 
gives \eqref{bd:phij}. The same proof gives \eqref{bd:psij}, using that by Appendix \ref{A:estimates}, 
$\| \Lambda WV_{(\lambda_i)} \|_{L^1_tL^2_x(\{r\geq |t|)\}}\lesssim \lambda_i^2|\log \lambda_i|$ for $i>j$, while $\| \Lambda WV_{(\lambda_i)} \|_{L^1_tL^2_x(\{r\geq |t|)\}}\lesssim \lambda_i^{-2}|\log \lambda_i|$ if $i<j$.
\end{proof}

\subsection{Channels of energy around a multisoliton for odd in time solutions}
\label{Sub:odd}
In this subsection we prove the following result:
\begin{lemma}[Channels for odd in time solutions around a multisoliton] \label{lem:channelsodd}
For any $J\in \mathbb N$, there exist $\gamma^*,C>0$ such that for all $\lambdabf \in \Lambda_J$ with $\gamma(\lambdabf)\leq \gamma^*$, any radially symmetric solution $u$ of \eqref{id:linearmultisoliton} satisfies:
\begin{equation}
\label{bound_sol'}
\| \Pi_{L^2,\lambdabf}^\perp \pa_t u(0)\|_{L^2}^2\leq C \left(\sum_{\pm } \lim_{t\rightarrow \pm \infty}  \int_{r\geq |t|} |\nabla_{t,x}u|^2 \ +\ \gamma(\lambdabf)^2 \|  \pa_t u(0) \|_{L^2}^2\right).
\end{equation} 
\end{lemma}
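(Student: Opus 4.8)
The plan is to mirror, in the degenerate even-dimensional setting, the strategy of the single-soliton odd-in-time result Lemma \ref{L:channelsoddunsoliton}, but now working with the potential $V_{\lambdabf}$ and keeping careful track of the errors produced by the separation parameter $\gamma(\lambdabf)$. Since $u$ is odd in time we may assume $u_0=0$, so the statement concerns $u_1=\pa_t u(0)$ only. First I would reduce, by a contradiction/compactness argument as in Cases 1--3 of Lemma \ref{L:channelsoddunsoliton}, to showing the rigidity statement: if $u$ is a solution of \eqref{id:linearmultisoliton} with $u_0=0$ that is non-radiative in both time directions on $\{r\geq |t|\}$, then $u_1\in \text{Span}((\Lambda W)_{[\lambda_j]})_{1\le j\le J}$, \emph{up to an error of size $\gamma(\lambdabf)\| u_1\|_{L^2}$}. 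The key new input compared to the single-soliton case is that the resonant directions are now the $J$ functions $\phi_j$ of Lemma \ref{lem:approxresonances}, which are only approximately equal to $t(\Lambda W)_{[\lambda_j]}$, with error $O(\gamma(\lambdabf))$ in $\mathcal H_{|t|}$ by \eqref{bd:phij}.

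The core estimate would go as follows. Decompose $u_1=\sum_{j=1}^J c_j (\Lambda W)_{[\lambda_j]} + v_1$ with $v_1$ orthogonal to all the $(\Lambda W)_{[\lambda_j]}$ in $L^2$; by definition $\|v_1\|_{L^2}=\|\Pi_{L^2,\lambdabf}^\perp u_1\|_{L^2}$, and the $c_j$ are controlled by $\|u_1\|_{L^2}$. Let $v$ be the solution of \eqref{id:linearmultisoliton} with data $(0,v_1)$; then on $\{r\geq|t|\}$ we have $v = u - \sum_j c_j \phi_j$, so by \eqref{bd:phij},
$$
\sum_{\pm}\lim_{t\to\pm\infty}\int_{r\geq|t|}|\nabla_{t,x}v|^2 \lesssim \sum_{\pm}\lim_{t\to\pm\infty}\int_{r\geq|t|}|\nabla_{t,x}u|^2 + \gamma(\lambdabf)^2\|u_1\|_{L^2}^2 .
$$
Next, let $v_F$ be the free-wave evolution of $(0,v_1)$. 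Writing Duhamel's formula, $v - v_F$ comes from the source term $-V_{\lambdabf}v$, which by \eqref{interactionsolitonZ-31} (applied soliton by soliton, with $\nabla v_1$ in place of the right side — here one uses $\|\nabla w_1\|_{Z_{-3}}\lesssim\|w_1\|_{L^2}$ only after an $L^2\to\dot H^1$ issue is dealt with; more precisely one bounds $\|V_{\lambdabf}v\|_{L^1L^2(r>|t|)}$ directly via Lemma \ref{lem:linearmultiout} and the smallness of $\|V_{(\lambda_j)}\|_{L^2L^4(r\ge|t|)}$) is small — in fact $\lesssim \sum_j$ (small) $\|v_1\|_{L^2}$. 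The subtlety is that $\|V_{\lambdabf}\|_{L^2L^4(r\ge|t|)}$ is \emph{not} small; it is $O(1)$. So, exactly as in the proof of Lemma \ref{L:rigiditylinearoddintime}, this step cannot be done in one shot — one must localise to $\{r\ge R+|t|\}$ for large $R$, where $\|V_{\lambdabf}\|_{L^2L^4(r\ge R+|t|)}\to 0$, and handle the region $|t|\le r\le R+|t|$ by a separate (compactness) argument. This is why the overall proof must be run through the generalised cone estimates, which is where the three-case analysis of Lemma \ref{L:channelsoddunsoliton} re-enters.

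Concretely I would: (1) prove the rigidity lemma for \eqref{id:linearmultisoliton} with $u_0=0$, non-radiative on $\{r\ge R+|t|\}$, concluding $u_1\in\text{Span}((\Lambda W)_{[\lambda_j]})_j$ on $r\ge R$ for $\gamma(\lambdabf)$ small, by the $V_{\lambdabf}=\sum V_{(\lambda_j)}\to 0$ in $L^2L^4$ on far cones argument plus propagation of support for \eqref{id:linearmultisoliton} (the analogue of \cite[Prop 4.7]{DuKeMe20}); (2) run the contradiction argument: assume sequences $\lambdabf_n$ with $\gamma(\lambdabf_n)\to$ some $\gamma^*$-bounded value (or, for the clean statement, treat the $\gamma(\lambdabf)\|u_1\|_{\mathcal H}$ term as an acceptable error and argue for fixed small $\gamma^*$), normalise $\|\Pi_{L^2,\lambdabf_n}^\perp u_{1,n}\|_{L^2}=1$ with right side $\to 0$, extract weak limits of the rescaled pieces, and split into the cases where the smallest scale $R_n/\lambda_{j,n}$ stays away from $0$, tends to a finite positive number, or tends to $0$ (the last requiring the truncated $\Gamma$-type profiles $\tGamma_{[\lambda_j]}$ and the resonance analysis of Step 1--4 of the single-soliton proof); (3) at the end, invoke \eqref{free_exterior} and Proposition \ref{P:CK1.4} for the free wave $v_{F,n}$ to derive the contradiction, exactly as in Case 1, Case 2 and Case 3 above, with the orthogonality $\int \frac{1}{r^4}v_{1,n}\to 0$ coming from weak convergence.

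The main obstacle I anticipate is bookkeeping the interaction between different solitons: one must show that the modulation parameters $c_j$ extracted scale-by-scale are consistent and that the cross terms $\int (\Lambda W)_{[\lambda_i]}(\Lambda W)_{[\lambda_j]}$, $\int (\Lambda W)_{[\lambda_i]}\tGamma_{[\lambda_j]}$ and the source terms $t\,\Lambda W\,V_{(\lambda_i)}$ for $i\ne j$ are all $O(\gamma(\lambdabf))$-small in the relevant norms, using the appendix estimates quoted in Lemma \ref{lem:approxresonances}; compared to the single-soliton case, the compactness/weak-limit extraction must be performed simultaneously for each of the $J$ scales (after rescaling $x\mapsto x/\lambda_{j,n}$), and one must be careful that a weak limit near scale $\lambda_j$ does not secretly carry energy that belongs to a neighbouring scale. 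The degenerate structure — that $(\Lambda W)_{[\lambda_j]}$ is a genuine $L^2$ element but the $\Gamma$-resonance is only borderline — is handled precisely as in Steps 1--4 of the single-soliton Case 3, localised around each $\lambda_j$.
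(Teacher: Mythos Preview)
Your initial reduction---projecting out the $(\Lambda W)_{[\lambda_j]}$ components and using \eqref{bd:phij} to control the exterior energy of the subtracted piece---matches the paper's Step~1 exactly. But from there your route diverges substantially from the paper's, and I think yours carries a real risk.

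The paper does \emph{not} rerun the compactness/contradiction argument of Lemma~\ref{L:channelsoddunsoliton} in the multisoliton setting. Instead, after the reduction to $u_1\perp(\Lambda W)_{[\lambda_j]}$ for all $j$, it proceeds by a direct induction on $j\in\llbracket 1,J\rrbracket$, proving that for each $\eps,\eta>0$ one has $\|u_1\|_{L^2_{\eta\lambda_j}}\le \eps\|u_1\|_{L^2}+C_{\eps,\eta}\sqrt{E_{out}}$ once $\gamma(\lambdabf)$ is small enough. At each step the equation is rewritten as $\partial_t^2u-\Delta u+V_{(\lambda_j)}u=-\sum_{k\ne j}V_{(\lambda_k)}u$, and the already-proved single-soliton channel estimate (packaged as Claim~\ref{Cl:rescaled_unsoliton}, i.e.\ Lemma~\ref{L:channelsoddunsoliton} with a source term) is applied as a black box on the cone $\{r>\eta\lambda_j+|t|\}$. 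The source terms $V_{(\lambda_k)}u$ for $k>j$ are small because $V_{(\lambda_k)}$ sits far inside $r>\eta\lambda_j$; for $k<j$ one splits the cone into a thin shell $|t|<r<\eta_{j-1}\lambda_{j-1}+|t|$ (where $\|V_{(\lambda_k)}\|_{L^2L^4}$ is small) and the exterior $r>\eta_{j-1}\lambda_{j-1}+|t|$ (where the induction hypothesis controls $\|u\|_{L^2L^4}$). At $j=J$ one takes $\eta=0$ and closes. This is clean and avoids all multi-scale compactness bookkeeping.

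Your proposed compactness route has a structural difficulty you touch on but do not resolve: the constant $C$ must be uniform over all $\lambdabf\in\Lambda_J$ with $\gamma(\lambdabf)\le\gamma^*$, so in the contradiction sequence $(\lambdabf_n,u_{1,n})$ the scales $\lambdabf_n$ cannot be fixed. After normalising one scale, the remaining $\lambda_{j,n}$'s drift; the weak limit after rescaling to any single scale sees only a single-soliton (or free) equation, so the rigidity input is still just Lemma~\ref{L:rigiditylinearoddintime}, and you would have to assemble $J$ separate weak-limit extractions into a statement about the full $u_{1,n}$---effectively rediscovering the paper's induction inside a less transparent wrapper. Note also that the multisoliton equation has no exact odd-in-time non-radiative solutions of the form $t(\Lambda W)_{[\lambda_j]}$: these do not solve \eqref{id:linearmultisoliton}, so your rigidity statement ``$u_1\in\text{Span}((\Lambda W)_{[\lambda_j]})$ on $r\ge R$'' is false as written for fixed $\lambdabf$. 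This obstacle never arises in the paper's approach because it never asserts rigidity for the full operator $-\Delta+V_{\lambdabf}$; it only ever invokes the single-soliton channel bound with a forcing term.
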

For fixed $R\geq 0$ and $\mu>0$, we will denote by $\Pi_{\mu,R}^{\perp}$ the orthogonal projection, in $L^2_R$ on the orthogonal of $\{(\Lambda W)_{[\mu]}\}$. We have
\begin{claim}
 \label{Cl:rescaled_unsoliton}
For all $\eta\geq 0$ there exists $C_{\eta}>0$ such that for all $\mu>0$ and $u$ a solution of 
 \begin{equation}
  \label{Th10}
  \partial_t^2u-\Delta u+V_{(\mu)}u=f,\quad |x|>\eta \mu+|t|,
 \end{equation} 
with $\vec{u}(0)\in \HHH$ and $f\in L^1L^2$, there holds
\begin{equation*}
 \left\|\Pi^{\bot}_{\mu,\eta \mu}\partial_tu(0)\right\|_{L^2_{\eta \mu}}^2\leq 
 C_\eta\left\| \indic_{\{|x|>\eta \mu+|t|\}} f\right\|^2_{L^1L^2}+C_{\eta}\sum_{\pm}\lim_{t\to\pm \infty}\int_{|x|>\eta\mu+|t|}|\nabla_{t,x}u(t,x)|^2dx.
\end{equation*} 
 \end{claim}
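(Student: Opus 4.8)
The claim asks for a channels-of-energy estimate for the single-soliton equation $\pa_t^2 u-\Delta u+V_{(\mu)}u=f$ on the exterior cone $\{|x|>\eta\mu+|t|\}$, uniform in the scaling parameter $\mu$. The plan is to reduce it to the case $\mu=1$ by scaling, and then to the already-established Lemma \ref{L:channelsoddunsoliton}, which is the corresponding statement for solutions of the \emph{homogeneous} linearised equation \eqref{LW} on a cone $\{r\geq R+|t|\}$ with $R\geq 0$. First I would set $v(t,x)=\mu^2 u(\mu t,\mu x)$; then $v$ solves $\pa_t^2 v-\Delta v+Vv=\tilde f$ with $\tilde f(t,x)=\mu^4 f(\mu t,\mu x)$ on $\{|x|>\eta+|t|\}$, and all the norms appearing transform covariantly: $\|\Pi^\perp_{\mu,\eta\mu}\pa_t u(0)\|_{L^2_{\eta\mu}}=\|\Pi^\perp_{L^2_\eta}\pa_t v(0)\|_{L^2_\eta}$ (using that the rescaling sends $(\Lambda W)_{[\mu]}$ to $\Lambda W$ and, when $0<\eta<1$, $\tGamma$-type corrections are handled by the definition \eqref{id:defpiL2R1}), the exterior energy $\int_{|x|>\eta\mu+|t|}|\nabla_{t,x}u|^2dx$ is scale-invariant, and $\|\tilde f\|_{L^1L^2}=\|f\|_{L^1L^2}$ since the $L^1_tL^2_x$ norm in six dimensions is scale-invariant under this $\dot H^1\times L^2$-critical scaling. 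So it suffices to prove the claim for $\mu=1$, i.e. with $R=\eta$ fixed.

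For $\mu=1$ the strategy is: split $u=u_L+w$ on the cone, where $u_L$ solves the homogeneous equation \eqref{LW} with the same Cauchy data $\vec u(0)$ restricted to $\{r>\eta\}$ (extended arbitrarily to all of $\RR^6$ with control of the $\HHH$ norm, as in Lemma \ref{L:CK1.3}), and $w$ is the solution with zero data and right-hand side $f\indic_{\{|x|>\eta+|t|\}}$. By finite speed of propagation $u=u_L+w$ on $\Ce_{0,\eta}$. Apply Lemma \ref{L:channelsoddunsoliton} to $u_L$ after symmetrising: replacing $u_L$ by its odd-in-time part $\frac12(u_L(t)-u_L(-t))$ only decreases the left-hand side while controlling $\pa_t u_L(0)=\pa_t\big(\tfrac12(u_L(t)-u_L(-t))\big)|_{t=0}$ exactly, and bounds the relevant projection of $u_1$ by $\sum_\pm\lim_{t\to\pm\infty}\int_{r\geq\eta+|t|}|\nabla_{t,x}u_L|^2$. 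It then remains to (i) control this exterior energy of $u_L$ by that of $u$ plus $\|f\|_{L^1L^2}$, and (ii) control $\|\Pi^\perp_{1,\eta}\pa_t w(0)\|_{L^2_\eta}$ — but $\pa_t w(0)=0$ since $w$ has zero data, so this term simply vanishes. For (i), since $u_L=u-w$ on the cone, the exterior energy of $u_L$ at time $t$ is bounded by that of $u$ plus $\sup_t\|\vec w(t)\|_{\mathcal H_{\eta+|t|}}$, and the latter is $\lesssim\|f\indic_{\Ce_{0,\eta}}\|_{L^1L^2}\lesssim\|f\indic_{\{|x|>\eta+|t|\}}\|_{L^1L^2}$ by Lemma \ref{L:CK1.3} (the exterior-cone Strichartz/energy estimate for the \emph{free} wave equation, combined with the standard Duhamel argument absorbing the potential term $V w\in L^1L^2$ locally in time — here one uses $W\in L^4$ and that $V=-2W$ lies in $L^2_tL^4_x(r\geq|t|)$ so the perturbation is a small, or at worst bounded, contraction on the relevant Strichartz space over the cone, exactly as in the proof that \eqref{LW} is globally well-posed with well-defined exterior energy).

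Putting these together yields
\[
\|\Pi^\perp_{1,\eta}\pa_t u(0)\|_{L^2_\eta}^2
\lesssim_\eta \sum_\pm\lim_{t\to\pm\infty}\int_{r\geq\eta+|t|}|\nabla_{t,x}u(t,x)|^2dx
+\|f\indic_{\{|x|>\eta+|t|\}}\|_{L^1L^2}^2,
\]
and undoing the scaling gives the stated estimate with a constant $C_\eta$ depending only on $\eta$. The main obstacle I anticipate is purely bookkeeping rather than conceptual: one must check that the projection operator $\Pi^\perp_{\mu,\eta\mu}$ transforms correctly under scaling and that it is compatible with the two-regime definition \eqref{id:defpiL2R1} (whether $\eta\mu$ is $\geq 1$, $=0$, or in $(0,1)$ determines whether one quotients out $\Lambda W$ alone or also $\tGamma$) — after rescaling to $\mu=1$ the relevant parameter is just $\eta$, so this is consistent, but the identification of the rescaled projection with $\Pi^\perp_{L^2_\eta}$ needs the observation that $\tGamma_{[\mu]}$ and the rescaled truncation of $\Gamma$ agree up to terms that do not affect the span (or, more cleanly, one absorbs the discrepancy into the constant $C_\eta$ since $\eta$ is fixed). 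A secondary technical point is justifying that $w$ genuinely has $\pa_t w(0)=0$ as an element of $L^2_\eta$ and that the Duhamel term is admissible over the cone, both of which follow from Lemma \ref{L:CK1.3} and the remarks at the start of Section \ref{subsec:channelsunsoliton}.
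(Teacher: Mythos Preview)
Your overall strategy is exactly the paper's: rescale to $\mu=1$, split off the inhomogeneous part by Duhamel/energy estimates on the cone (so $\pa_t w(0)=0$ and the exterior energy of the homogeneous piece differs from that of $u$ by $O(\|f\|_{L^1L^2(\Ce_{0,\eta})})$), pass to the odd-in-time part, and invoke Lemma~\ref{L:channelsoddunsoliton}. That part is fine.

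There is, however, a genuine gap in the projection bookkeeping for $0<\eta<1$, and it is not the ``$\tGamma_{[\mu]}$ versus rescaled truncation'' issue you flag. After rescaling, $\Pi^\perp_{\mu,\eta\mu}$ becomes $\Pi^\perp_{1,\eta}$, the $L^2_\eta$-projection onto $\{\Lambda W\}^\perp$; it removes \emph{only} $\Lambda W$. Lemma~\ref{L:channelsoddunsoliton}, on the other hand, controls $\Pi^\perp_{L^2_\eta}$, which by \eqref{id:defpiL2R1} removes \emph{both} $\Lambda W$ and $\tGamma$ when $0<\eta<1$. Since projecting out more directions decreases the norm, the lemma only gives $\|\Pi^\perp_{L^2_\eta}u_1\|_{L^2_\eta}\le \|\Pi^\perp_{1,\eta}u_1\|_{L^2_\eta}$, which is the wrong direction. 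Your proposed fix---``absorb the discrepancy into $C_\eta$''---cannot work, because the discrepancy is the $\tGamma$-component of $u_1$, an $u_1$-dependent quantity, not a constant depending on $\eta$ alone.

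The paper closes this gap with one additional, short step. Write $u_1=\alpha\tGamma+\beta\Lambda W+\Pi^\perp_{L^2_\eta}u_1$. The solution of \eqref{LW} with data $(0,\tGamma)$ radiates energy bounded above and below by positive constants outside $\{r>\eta+|t|\}$ for $\eta\in(0,1)$---this is precisely \eqref{CO40}. Since $\alpha\tGamma=u_1-\beta\Lambda W-\Pi^\perp_{L^2_\eta}u_1$, and the solutions with data $(0,\beta\Lambda W)$ and $(0,\Pi^\perp_{L^2_\eta}u_1)$ have exterior energies $0$ and $\lesssim \|\Pi^\perp_{L^2_\eta}u_1\|^2_{L^2_\eta}\lesssim E_{out}(\eta)$ respectively, the lower bound in \eqref{CO40} forces $\alpha^2\lesssim E_{out}(\eta)$. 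Then $\Pi^\perp_{1,\eta}u_1=\Pi^\perp_{1,\eta}\big(\alpha\tGamma+\Pi^\perp_{L^2_\eta}u_1\big)$, and since $\|\tGamma\|_{L^2_\eta}\le C_\eta$ one obtains $\|\Pi^\perp_{1,\eta}u_1\|_{L^2_\eta}\le C_\eta\sqrt{E_{out}(\eta)}$. This is the missing ingredient in your argument.
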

 \begin{proof}
  By scaling, energy estimates, time symmetry and finite speed of propagation, it is sufficient to prove the result assuming that $u(0)\equiv 0$, $\mu=1$ and $f\equiv 0$. If $\eta=0$, this is Lemma \ref{L:channelsoddunsoliton} with $R=0$. 
  
  If $\eta>0$, using Lemma \ref{L:channelsoddunsoliton} one can decompose $u_1=\partial_t u(0)$ as
\begin{gather}
\label{Decomp1} 
 u_1=\alpha \tGamma+\beta\Lambda W+\Pi_{L^2_\eta}^{\bot}u_1,\\
 \label{Decomp2} \left\|\Pi_{L^2_{\eta}}^{\bot}u_1\right\|_{L^2_{\eta}}^2 \lesssim E_{out}(\eta):=\lim_{t\to+\infty}\int_{|x|>\eta+t}|\nabla_{t,x}u(t,x)|^2dx.
 \end{gather} 
 Furthermore, writing $\alpha\tGamma=u_1-\beta\Lambda W-\Pi_{L^2_{\eta}}^{\bot}u_1$, we obtain by \eqref{CO40},
 \begin{equation}
  \label{Rem42} 
  \alpha^2\lesssim E_{out}(\eta).
 \end{equation} 
 Combining:
 $$
 \| \alpha \tGamma+\Pi_{L^2_\eta}^{\bot}u_1\|_{L^2_{\eta }}^2 \leq C_{\eta}E_{out}(\eta).
 $$
As $\Pi^{\bot}_{1,\eta}u_1=\Pi^{\bot}_{1,\eta}( \alpha \tGamma+\Pi_{L^2_\eta}^{\bot}u_1)$, the above inequality implies the desired result.
 \end{proof}

\begin{proof}[Proof of Lemma \ref{lem:channelsodd}]

\textbf{Step 1:} reduction.  
We can assume 
$$(u,\partial_tu)_{\restriction t=0}=(0,u_1).$$
Furthermore, we can decompose 
$$u_1=\Pi^{\bot}_{L^2,\lambdabf}u_1+v_1,$$
where $v_1\in \mathrm{span}\left\{ (\Lambda W)_{[\lambda_k]}\right\}_{1\leq k\leq J}$. Letting $v$ be the solution of \eqref{id:linearmultisoliton} with initial data $(0,v_1)$, we see by \eqref{bd:phij} in Lemma \ref{lem:approxresonances} that 
$$\lim_{t\to+\infty}\int_{|x|>|t|} |\nabla_{t,x}v(t,x)|^2dx\lesssim \gamma(\lambdabf)^2\|v_1\|^2_{L^2}\lesssim \gamma(\lambdabf)^2\|u_1\|^2_{L^2}.$$
We are thus reduced to the case where 
\begin{equation}
 \label{reduced}
 \int u_1 (\Lambda W)_{[\lambda_j]}=0, \; j\in \llbracket 1,J\rrbracket.
\end{equation} 

\smallskip

\textbf{Step 2:} \emph{induction.} We prove by induction on $j\in  \llbracket 1,J\rrbracket$ the following property:\\

\emph{Induction Property for $j\in  \llbracket 1,J\rrbracket$}: for any $\eps>0$ and any $\eta>0$, there exists $\gamma^*_j>0$ such that if $0<\gamma(\lambdabf)\leq \gamma^*_j$ then:
\begin{equation}
 \label{Spl10}
 \|u_1\|_{L^2_{\eta\lambda_j}}\leq \eps\|u_1\|_{L^2}+ C_{\eps,\eta} \sqrt{E_{out}},
\end{equation} 
where $E_{out}=\lim_{t\to\infty}\int_{|x|>|t|}|\nabla_{t,x}(u(t,x))|^2dx$ (the constant $C_{\eps,\eta}$ depends also on $J$ but we omit this dependence).\\

We fix $j\in\llbracket 1,J\rrbracket$ and assume that $j=1$, or that $j\geq 2$ and that the induction property holds true for $j-1$, i.e. that for any $\eps_{j-1}>0$, for any $\eta_{j-1}>0$, and for any $\gamma(\lambdabf)$ small enough we have
\begin{equation}
 \label{Spl_induction}
 \|u_1\|_{L^2_{\eta_{j-1}\lambda_{j-1}}}\leq \eps_{j-1}\|u_1\|_{L^2}+ C_{\eps_{j-1},\eta_{j-1}} \sqrt{E_{out}}.
\end{equation} 
To prove the induction property for $j$, we fix $\eps_j>0$ and $\eta_j>0$. Without loss of generality,  we can choose $\eta_j>0$ small enough (depending on $\eps_j$), which we will do later on. Below we use the convention that $\lambda_{j-1}=\infty $ if $j=1$. We let $\eta_{j-1}>0$ and $\eps_{j-1}>0$ to be fixed later on depending on $\eps_j$ and $\eta_j$. In what follows, the dependence of the constants on the parameters $\eps_{j}$, $\eps_{j-1}$, $\eta_j$ and $\eta_{j-1}$ is indicated by subscripts, but the dependence on $J$ is omitted. The notation $C$ thus stands for a generic constant only depending on $J$. We write 
$$\partial_t^2u-\Delta u-2W_{(\lambda_j)}u=2\sum_{ k\neq j} W_{(\lambda_k)}u.$$
By the Claim \ref{Cl:rescaled_unsoliton}, 
\begin{equation}
 \label{Spl20}
 \left\|\Pi^{\bot}_{\lambda_j,\eta_j \lambda_j}u_1\right\|_{L^2_{\eta_j\lambda_j}}\leq C_{\eta_j} \sum_{k \neq j} \left\|\indic_{\{|x|>\eta_j\lambda_j+|t|\}}W_{(\lambda_k)}u\right\|_{L^1L^2}+C_{\eta_j}\sqrt{E_{out}}.
\end{equation} 
If $k>j$, 
\begin{multline*}
 \left\|\indic_{\{|x|>\eta_j\lambda_j+|t|\}}W_{(\lambda_k)}u\right\|_{L^1L^2}\leq \left\|\indic_{\{|x|>\eta_j\lambda_j+|t|\}}W_{(\lambda_k)}\right\|_{L^2L^4}\left\|\indic_{\{|x|>\eta_j\lambda_j+|t|\}}u\right\|_{L^2L^4}\\
 \lesssim \left\|\indic_{\{|x|>\eta_j\lambda_j+|t|\}}W_{(\lambda_k)}\right\|_{L^2L^4}\|u_1\|_{L^2_{\eta_j\lambda_j}}
\end{multline*}
by Lemma \ref{lem:linearmultiout}.
We have by a direct computation
$$\left\|\indic_{\{|x|>\eta_j \lambda_j+|t|\}}W_{(\lambda_k)}\right\|_{L^2L^4}\lesssim \left\|\indic_{\{|x|>\eta_j\lambda_j+|t|\}}\frac{\lambda_k^2}{r^4} \right\|_{L^2L^4}\lesssim \frac{\lambda_k^2}{\eta^2_j\lambda_j^2}\lesssim \frac{\gamma(\lambdabf)^2}{\eta_j^2},  $$
and thus, 
\begin{equation}
\label{Spl21}
  \forall k\in \llbracket j+1,J\rrbracket,\quad \left\|\indic_{\{|x|>\eta_j\lambda_j+|t|\}}W_{(\lambda_k)}u\right\|_{L^1L^2}\lesssim \frac{\gamma(\lambdabf)^2}{\eta^2_j}\| u_1\|_{L^2}.
\end{equation} 
If $k<j$, by H\"older,
\begin{multline}
\label{Spl22}
\left\|\indic_{\{|x|>\eta_j\lambda_j+|t|\}}W_{(\lambda_k)}u\right\|_{L^1L^2}\leq
\left\|\indic_{\{|t|<|x|<\eta_{j-1}\lambda_{j-1}+|t|\}}W_{(\lambda_k)}\right\|_{L^2L^4}\|\indic_{\{|x|>|t|\}}u\|_{L^2L^4}\\
+\left\|\indic_{\{|x|>\eta_{j-1}\lambda_{j-1}+|t|\}}W_{(\lambda_k)}\right\|_{L^2L^4}\|\indic_{\{|x|>\eta_{j-1}\lambda_{j-1}+|t|\}}u\|_{L^2L^4}.
\end{multline}
We have since $\lambda_{j-1}\leq \lambda_k$
$$
\left\|\indic_{\{|t|<|x|<\eta_{j-1}\lambda_{j-1}+|t|\}}W_{(\lambda_k)}\right\|_{L^2L^4}=\left\|\indic_{\{|t|<|x|<\eta_{j-1}\frac{\lambda_{j-1}}{\lambda_k}+|t|\}}W\right\|_{L^2L^4}=o_{\eta_{j-1} \to 0}(1), 
$$
uniformly for $1\leq k\leq j-1 $, and, by Lemma \ref{lem:linearmultiout},
$$
\|\indic_{\{|x|>|t|\}}u\|_{L^2L^4} \lesssim  \|u_1\|_{L^2}. 
$$
Combining, we obtain for all $1\leq k \leq j-1$:
\begin{equation}
\label{cond_eta}
\left\|\indic_{\{|t|<|x|<\eta_{j-1}\lambda_{j-1}+|t|\}}W_{(\lambda_k)}\right\|_{L^2L^4}\|\indic_{\{|x|>|t|\}}u\|_{L^2L^4} 
=o_{\eta_{j-1}\to 0}( \| u_1\|_{L^2} )
\end{equation}  
Next, by Lemma \ref{lem:linearmultiout}, $\|\indic_{\{|x|>\eta_{j-1}\lambda_{j-1}+|t|\}}u\|_{L^2L^4}\leq C\|u_1\|_{L^2_{\eta_{j-1}\lambda_{j-1}}}$. Combining with the induction hypothesis \eqref{Spl_induction}, we obtain
\begin{equation}
 \label{Spl31}
 \|\indic_{\{|x|>\eta_{j-1}\lambda_{j-1}+|t|\}}u\|_{L^2L^4}\leq C \left( \eps_{j-1}\|u_1\|_{L^2}+C_{\eps_{j-1},\eta_{j-1}}\sqrt{E_{out}}\right).
\end{equation}
Combining the inequalities \eqref{Spl22},\eqref{cond_eta}, and \eqref{Spl31}, we deduce that
\begin{equation}
 \label{Spl32}
 \forall k\in \llbracket 1,j-1\rrbracket,\quad \left\|\indic_{\{|x|>\eta_j\lambda_j+|t|\}}W_{(\lambda_k)}u\right\|_{L^1L^2}\leq  \left(C\eps_{j-1}+o_{\eta_{j-1}}(1)\right) \|u_1\|_{L^2}+C_{\eps_{j-1},\eta_{j-1}}\sqrt{E_{out}},
 \end{equation} 
 where $o_{\eta_{j-1}}(1)\to 0$ as $\eta_{j-1}\to 0$. 
 By \eqref{Spl20}, \eqref{Spl21} and \eqref{Spl32}, we obtain
 \begin{align}
  \nonumber
  \left\|\Pi^{\bot}_{\lambda_j,\eta_j \lambda_j}u_1\right\|_{L^2_{\eta\lambda_j}} & \leq  C_{\eta_j} \left( \eps_{j-1}+o_{\eta_{j-1}}(1)+\frac{\gamma^2(\lambdabf)}{\eta_j^2}\right) \|u_1\|_{L^2}+ C_{\eps_j,\eta_j,\eps_{j-1},\eta_{j-1}} \sqrt{E_{out}} \\
\label{Spl33}  & \leq \frac{\eps_j}{2} \|u_1\|_{L^2}+ C_{\eps_j,\eta_j,\eps_{j-1},\eta_{j-1}} \sqrt{E_{out}} 
 \end{align} 
 where the last inequality holds for any $\eta_j,\eps_j>0$, provided $\eps_{j-1}$ and $\eta_{j-1}$ are chosen small enough depending on $\eps_j$ and $\eta_j$, and then $\gamma$ is taken small enough.
 
 Next, we observe that (since $u_1\bot (\Lambda W)_{[\lambda_j]}$ in $L^2$),
 \begin{multline*}
  \left|\int_{|x|>\eta_j\lambda_j} u_1(\Lambda W)_{[\lambda_j]}\right|=\left|\int_{|x|<\eta_j\lambda_j} u_1(\Lambda W)_{[\lambda_j]}\right|\leq \|u_1\|_{L^2}\left\| (\Lambda W)_{[\lambda_j]}\right\|_{L^2(\{|x|<\eta_j\lambda_j\})}\\
  \leq \|u_1\|_{L^2} \|\Lambda W\|_{L^2(\{|x|<\eta_j\})}=o_{\eta_j}(1)\|u_1\|_{L^2},
 \end{multline*}
Together with \eqref{Spl33} we obtain the desired conclusion \eqref{Spl10}.

\smallskip

\noindent\textbf{Step 3}. \emph{Conclusion of the proof.}

Using the conclusion of Step 2, and the same argument as in Step 2 with $j=J$ and $\eta_J=0$, we obtain that for any $\eps_J>0$, there exists a constant $C_{\eps_J}$ such that for $\gamma(\lambdabf)$ small enough,
 \begin{equation*}
 \|u_1\|_{L^2}\leq \eps_J \|u_1\|_{L^2}+ C_{\eps_J} \sqrt{E_{out}}.
 \end{equation*}
Choosing $0<\eps_J<1$, this implies that $\|u_1\|_{L^2}\leq \sqrt{E_{out}}$ which is the desired result.

\end{proof}

\subsection{Channels of energy around a multisoliton for even in time solutions}
\label{Sub:even}

In this subsection we prove:
\begin{lemma}
\label{L:channelseven}
For any $J\in \mathbb N$, there exist $\gamma^*,C>0$ such that for all $\lambdabf \in \Lambda_J$ with $\gamma(\lambdabf)\leq \gamma^*$, any radially symmetric solution $u$ of \eqref{id:linearmultisoliton} satisfies:
\begin{equation}
\label{bound_even_sol}
\| \nabla \Pi^\perp_{\dot{H}^1,\lambdabf} u_0 \|_{Z_{-3,\lambdabf}}^2 \leq C \left(\sum_{\pm } \lim_{t\rightarrow \pm \infty}  \int_{r\geq |t|} |\nabla_{t,x}u|^2dx+\gamma(\lambdabf)^4|\log \gamma(\lambdabf)|^2 \| u_0\|_{Z_{-3,\lambdabf}}^2\right). 
\end{equation} 
\end{lemma}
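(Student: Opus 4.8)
The plan is to mirror the single-soliton even-in-time argument (Lemmas~\ref{L:ellipticunsoliton}, \ref{L:estimateufromutunsoliton} and \ref{lem:estimateufromutunsoliton2}), now with the multiscale potential $V_{\lambdabf}$, the multiscale weighted spaces $Z_{-3,\lambdabf}$, $\widetilde Z_{-3,\lambdabf}$ and ``radius $R$'' variants of them, treating the interactions between the $J$ solitons as perturbations of size a power of $\gamma(\lambdabf)$. First I would reduce. Writing $u=u_++u_-$ with $u_\pm(t)=\tfrac12(u(t)\pm u(-t))$, a direct computation (the cross term at $t$ and $-t$ cancels in the exterior of the light cone) gives $E_{\textup{out}}(u_+)\le E_{\textup{out}}(u)$, and since $u_0=u_+(0)$ and $\Pi^\perp_{\dot H^1,\lambdabf}u_0$ is unchanged, we may assume $\pa_tu(0)=0$. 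Next, writing $u_0=\Pi^\perp_{\dot H^1,\lambdabf}u_0+\sum_{j=1}^Jc_j(\Lambda W)_{(\lambda_j)}$, the near-orthogonality of the $(\Lambda W)_{(\lambda_j)}$ in $\dot H^1$ for $\gamma(\lambdabf)$ small together with the rapid decay of $\nabla(\Lambda W)_{(\lambda_j)}$ against the $Z_{-3,\lambdabf}$ weights controls the $c_j$; subtracting the solution $\sum_jc_j\psi_j$ of \eqref{id:linearmultisoliton}, which by \eqref{bd:psij} radiates outside the light cone an energy $\lesssim\gamma(\lambdabf)^4|\log\gamma(\lambdabf)|^2\|u_0\|_{Z_{-3,\lambdabf}}^2$, reduces us to $u_0=\Pi^\perp_{\dot H^1,\lambdabf}u_0$; after this the left-hand side of \eqref{bound_even_sol} is just $\|\nabla u_0\|_{Z_{-3,\lambdabf}}^2$ and the $\gamma$-term has been absorbed into the right-hand side.

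The core is a multiscale version of three lemmas. \emph{(a) Elliptic estimate.} For $-\Delta u+V_{\lambdabf}u=f+\pa_rg$ on $\{r\ge R\}$ with $f,g$ finite in the appropriate $Z_{-4,\lambdabf,R}$, $Z_{-3,\lambdabf,R}$ norms, one wants $d_{Z_{-2,\lambdabf,R}}\big(u,\ \textstyle\sum_{j}\mathrm{Span}((\Lambda W)_{(\lambda_j)},\Upsilon_{(\lambda_j)})\big)\lesssim\|f\|+\|g\|$. Since $V_{\lambdabf}=\sum_jV_{(\lambda_j)}$ and the scales are well separated, a fundamental system for $-\Delta+V_{\lambdabf}$ can be built by gluing the single-soliton profiles $(\Lambda W)_{(\lambda_j)}$, $\Gamma_{(\lambda_j)}$ dyadically, the cross-interactions contributing $O(\gamma(\lambdabf)^2)$ errors via the decay computations of \S\ref{subsec:channels_multi}; one then solves the ODE and estimates the Duhamel integrals by the dyadic Cauchy--Schwarz scheme of the proof of Lemma~\ref{L:ellipticunsoliton}, now against the weight $\inf_j\langle\log(R/\lambda_j)\rangle$. \emph{(b) Recovering $u_0$ from $\pa_tu$.} As in Lemma~\ref{L:estimateufromutunsoliton}, applying the fundamental theorem of calculus in $t$ to $\pa_tu$ and then to $u$ (using $\pa_tu(0)=0$), writing $\pa_tu_L(t)$ on dyadic shells as a combination of the $(\Lambda W)_{(\lambda_j)}$, $\widetilde\Gamma_{(\lambda_j)}$ plus a remainder $g_k(t)$, and integrating the resulting identities against suitably normalized test functions supported in the inter-scale gaps, produces an elliptic equation for $u_0$ with right-hand side controlled by $\|\pa_tu\|_{Y_{\lambdabf,R}}:=\sup_{t,\,\rho>|t|+R}\|\Pi^\perp_{L^2,\lambdabf,\rho}\pa_tu(t)\|_{L^2_\rho}$; then (a) yields $d_{Z_{-2,\lambdabf,6R}}(u_0,\sum_j\mathrm{Span}((\Lambda W)_{(\lambda_j)},\Upsilon_{(\lambda_j)}))\lesssim\|\pa_tu\|_{Y_{\lambdabf,R}}$. \emph{(c) Upgrading to $\nabla u_0$.} The energy identity \eqref{id:L2toH1logenergyidentity} applied to a $\chi_k$-localized projection of $u$ on dyadic shells, followed by the mean value theorem in $t$, finite speed of propagation and the energy estimate of Lemma~\ref{lem:linearmultiout}, gives, exactly as in Lemma~\ref{lem:estimateufromutunsoliton2}, $d_{Z_{-3,\lambdabf,24R}}(\pa_ru_0,\sum_j\mathrm{Span}(\pa_r(\Lambda W)_{(\lambda_j)},\pa_r\Upsilon_{(\lambda_j)}))\lesssim d_{Z_{-2,\lambdabf,6R}}(u_0,\cdots)+\|\pa_tu\|_{Y_{\lambdabf,R}}$.

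It then remains to bound $\|\pa_tu\|_{Y_{\lambdabf,R}}$ and to remove the $\Upsilon_{(\lambda_j)}$-resonances. For the first point, time-translating $u$ and using the odd/even splitting at each time slice reduces matters to the odd-in-time statement; here one invokes Claim~\ref{Cl:rescaled_unsoliton} at each scale together with the interaction bounds \eqref{interactionsolitonZ-31}--\eqref{interactionsolitonZ-33} (and the scale induction already carried out in the proof of Lemma~\ref{lem:channelsodd}) to get $\|\pa_tu\|_{Y_{\lambdabf,R}}^2\lesssim\sum_\pm\lim_{t\to\pm\infty}\int_{r\ge R+|t|}|\nabla_{t,x}u|^2$, the $\gamma$-error in Lemma~\ref{lem:channelsodd} being harmless because $\pa_tu(0)=0$. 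For the second point, one runs the combined estimate of (a)--(c) with $R=R_n\downarrow 0$: at each $n$ one decomposes $\pa_ru_0=\sum_j(c_{n,j}\pa_r(\Lambda W)_{(\lambda_j)}+c_{n,j}'\pa_r\Upsilon_{(\lambda_j)})+\pa_r\tilde u_{0,n}$ with $\|\pa_r\tilde u_{0,n}\|_{Z_{-3,\lambdabf,cR_n}}$ bounded; testing on a fixed annulus $\{\lambda_{j_0}<r<2\lambda_{j_0}\}$ and using that there the $2J$ profiles are uniformly non-collinear for $\gamma(\lambdabf)$ small bounds the $c_{n,j},c_{n,j}'$; the blow-up $\pa_r\Upsilon_{(\lambda_j)}(r)\sim\lambda_j^2r^{-5}$ as $r\to 0$ then forces $\|\pa_r\Upsilon_{(\lambda_j)}\|_{Z_{-3,\lambdabf,cR_n}}\to\infty$, hence $c_{n,j}'\to 0$; passing to the limit and absorbing the $(\Lambda W)_{(\lambda_j)}$-part into $\Pi^\perp_{\dot H^1,\lambdabf}$ exactly as at the end of the proof of Lemma~\ref{L:channelsevenunsoliton} gives \eqref{bound_even_sol}.

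The main obstacle I expect is step (a) and its gluing across the $J$ scales: one must build a usable fundamental system for $-\Delta+V_{\lambdabf}$ out of the single-soliton profiles, control every cross-term by the correct power of $\gamma(\lambdabf)$, and, most delicately, do this while preserving the sharp weight $\inf_j\langle\log(R/\lambda_j)\rangle$ --- the dyadic sums that telescope cleanly in the single-soliton proofs must now be organized scale by scale so that each transition region between consecutive $\lambda_j$'s contributes exactly the $\langle\log\rangle$ loss and no more. A secondary difficulty is that $\Upsilon_{(\lambda_j)}\notin\dot H^1$ and $(\Lambda W)_{(\lambda_j)}$ is only borderline in $\dot H^1$, so all the decompositions, and the bookkeeping of the coefficients $c_j,c_{n,j},c_{n,j}'$, must be carried out in the weighted spaces $Z_{\bullet,\lambdabf,\bullet}$ rather than in $\mathcal H$.
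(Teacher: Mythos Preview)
Your reduction (Step~0) matches the paper's exactly, but from there the two arguments diverge. The paper does \emph{not} build any multiscale elliptic theory for $-\Delta+V_{\lambdabf}$, nor a multiscale $Y$-norm. Instead it runs an induction on $j=1,\ldots,J$: at each step it writes $u=v_j+\tilde v_j$ where $v_j$ solves the \emph{single}-soliton equation $\partial_t^2v_j-\Delta v_j+V_{(\lambda_j)}v_j=0$ with the same data and $\tilde v_j$ carries the interaction forcing $-\sum_{i\neq j}V_{(\lambda_i)}v_j$. One then applies the single-soliton black box Lemma~\ref{L:channelsevenunsoliton2} to $v_j$ on $\{r>R_j^-/24+|t|\}$, controls the forcing by splitting $v_j=v_j^1+v_j^2+v_j^3$ according to the support of the data and invoking \eqref{interactionsolitonZ-31}--\eqref{interactionsolitonZ-33} plus the induction hypothesis, and kills the coefficients of $\Upsilon_{(\lambda_j)}$ and $(\Lambda W)_{(\lambda_j)}$ one scale at a time by comparing them at the intermediate radii $R_j^{\pm}=\sqrt{\lambda_j\lambda_{j+1}},\,\lambda_{j+1}\gamma^{-1/4}$ and using the single orthogonality $\langle u_0,(\Lambda W)_{(\lambda_j)}\rangle_{\dot H^1}=0$. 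This sidesteps entirely your obstacle~(a): no fundamental system for the full $V_{\lambdabf}$ is ever needed.

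Your route---redoing Lemmas~\ref{L:ellipticunsoliton}--\ref{lem:estimateufromutunsoliton2} for $V_{\lambdabf}$---is plausible in principle but carries a concrete gap beyond the gluing difficulty you flag. Your control of $\|\partial_tu\|_{Y_{\lambdabf,R}}$ appeals to Lemma~\ref{lem:channelsodd}, but that lemma is stated and proved only for $R=0$; a version on cones $\{r>R+|t|\}$ with $R>0$ would require a multiscale analogue of the projector \eqref{id:defpiL2R1} (deciding, for each $R$, which $\widetilde\Gamma_{(\lambda_j)}$ must be projected out), which you have not supplied. Moreover, after time-translating to a slice $t=\bar t$ the $\gamma$-error in \eqref{bound_sol'} becomes $\gamma^2\|\partial_tu(\bar t)\|_{L^2}^2$, not $\gamma^2\|\partial_tu(0)\|_{L^2}^2$, so the claim that it ``is harmless because $\partial_tu(0)=0$'' is incorrect; this error is $\lesssim\gamma^2\|u_0\|_{\dot H^1}^2$, which is not bounded by the right-hand side of \eqref{bound_even_sol}. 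The paper's scale-by-scale strategy avoids both issues because the $Y$-norm it uses (implicitly, inside Lemma~\ref{L:channelsevenunsoliton2}) is the single-soliton one, for which Lemma~\ref{L:channelsoddunsoliton} already covers all $R\geq 0$ with no $\gamma$-error.
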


\begin{proof}
\textbf{Step 0}.  \emph{Preliminary reduction}. 

In all the proof, to lighten notation, we denote 
$$\gamma=\gamma(\lambdabf) \quad\text{and}\quad E_{out}=\sum_{\pm } \lim_{t\rightarrow \pm \infty}  \int_{r\geq |t|} |\nabla_{t,x}u|^2dx.$$
Replacing $u(t,x)$ by $u(t,x)+u(-t,x)$, we see that we can assume $u_1=0$.  

We also claim that it suffices to prove the estimate
\be \label{odd:bd:reducedresult}
\| \nabla \Pi^\perp_{\dot{H}^1,\lambdabf} u_0 \|_{Z_{-3,\lambdabf}} \lesssim \sqrt{E_{out}}
\ee
when 
\be \label{odd:id:assumption}
u_0=\Pi^\perp_{\dot{H}^1,\lambdabf} u_0
\ee
Indeed, assume that we have proved \eqref{bound_even_sol} for all solutions of \eqref{id:linearmultisoliton} with initial data $(u_0,0)$ satisfying \eqref{odd:id:assumption}. Let then $u$ be a solution of \eqref{id:linearmultisoliton} with initial data $(u_0,0)$ with $u_0\in \dot{H}^1$ and write $u_0=v_0+w_0$ with $w_0=\Pi^{\bot}_{\dot{H}^1,\lambdabf}u_0$. Let $v$ and $w$ be the corresponding solutions of \eqref{id:linearmultisoliton}. By our assumption,
\begin{equation}
 \label{St0:1}
 \|\nabla w_0\|_{Z_{-3,\lambdabf}}^2\lesssim \lim_{t\to\infty} \int_{|x|>|t|}|\nabla_{t,x}w(t,x)|^2dx.
\end{equation} 
We claim
\begin{equation}
  \label{St0:2}
 \lim_{t\to\infty} \int_{|x|>|t|}|\nabla_{t,x}v(t,x)|^2dx\lesssim \gamma^4|\log \gamma|^2 \|\nabla v_0\|^2_{Z_{-3,\lambdabf}}.
\end{equation}
By \eqref{St0:1} and \eqref{St0:2}, we have 
\begin{multline*}
\|\nabla w_0\|_{Z_{-3,\lambdabf}}^2\lesssim E_{out}+\lim_{t\to\infty} \int_{|x|>|t|}|\nabla_{t,x}v(t,x)|^2dx
\lesssim E_{out}+\gamma^4|\log \gamma|^2\|\nabla v_0\|_{Z_{-3,\lambdabf}}^2\\\lesssim E_{out}+\gamma^4|\log \gamma|^2\left( \|\nabla u_0\|_{Z_{-3,\lambdabf}}^2+\|\nabla w_0\|_{Z_{-3,\lambdabf}}^2 \right),
\end{multline*}
which yields the desired conclusion, using the smallness of $\gamma$. 

To conclude Step 0, it remains to prove \eqref{St0:2}. Denoting $v_0=\sum_{j=1}^J \alpha_j (\Lambda W)_{(\lambda_j)}$, we see that \eqref{St0:2} is a direct consequence of 
\begin{gather}
 \label{St0:3}
 \|\nabla v_0\|_{Z_{-3,\lambdabf}}\gtrsim \sup_{1\leq j\leq J} |\alpha_j|\\
 \label{St0:4}
 \lim_{t\to\infty}\int_{|x|>|t|} |\nabla_{t,x}v(t,x)|^2dx\lesssim \gamma^4|\log \gamma|^2 \sup_{1\leq j\leq J}\alpha_j^2.
\end{gather}
The inequality \eqref{St0:3} is an easy consequence of the definition of the norm $Z_{-3,\lambdabf}$ and the fact that $\gamma$ is small. The estimate \eqref{St0:4} follows from \eqref{bd:psij} in Lemma \ref{lem:approxresonances}.\\

\textbf{Step 1}. \emph{Formulating the induction property}. We introduce the notation for $1\leq j \leq J$:
$$
R_j^+=\sqrt{\lambda_j\lambda_{j+1}}, \qquad R_j^{-}=\frac{\lambda_{j+1}}{\gamma^{\frac 14}}.
$$
Note that $R_j^-\leq \gamma^{\frac 14} R_j^+$. For $1\leq j\leq J$ we define $\lambdabf_j=(\lambda_1,...,\lambda_j)$ and, for any $\lambda>0$:
$$
\| f\|_{Z_{-3,\lambda}}=\|  f_{[\frac{1}{\lambda}]}\|_{Z_{-3}}.
$$
Observe that for any $1\leq j \leq J$:
\be \label{bd:channelsevenmultiinter}
\| f\|_{Z_{-3,\lambda_j}}  \leq \| f\|_{Z_{-3,\lambdabf}},
\ee
and that if moreover $\textup{supp}(f)\subset [R_{j}^+,\infty)$ then:
\be \label{bd:channelsevenmultiinter2}
 \| f\|_{Z_{-3,\lambdabf}} \lesssim \| f\|_{Z_{-3,\lambdabf_j}}.
\ee
We fix $\alpha = 1/16$. We will prove by induction on $j=1,...,J$ the following property:\\

\emph{Induction Property for $j\in  \llbracket 1,J\rrbracket$}. There holds:
\be \label{odd:bd:ionductionprop}
\| \indic_{\{r\geq R_j^+\}} \nabla u_{0} \|_{Z_{-3,\lambdabf_j}}\lesssim \sqrt{E_{out}}+\gamma^\alpha \| \nabla u_0\|_{Z_{-3,\lambdabf}}.
\ee
Then, for $j=J$, \eqref{odd:bd:ionductionprop} implies \eqref{odd:bd:reducedresult} as $R_J^+=0$ and the proof of the Lemma is over.\\

\textbf{Step 2}. \emph{Proof of the induction property for $2\leq j \leq J-1$}. We assume that the induction property \eqref{odd:bd:ionductionprop} holds true for $j-1$ for some $2\leq j \leq J-1$ and prove in this step that it is true for $j$ as well. We decompose the solution as:
\be \label{odd:id:decompositioninner}
u=v_{j}+\tilde v_j,
\ee
where
$$
\pa_t^2 v_j-\Delta v_j+V_{(\lambda_j)}v_j=0, \qquad (v_j(0),\pa_t v_j(0))=(u_{0},0)
$$
and
$$
\pa_t^2 \tilde v_j-\Delta \tilde v_j+V_{\lambdabf}\tilde v_j=F(v_j)=-\sum_{i\neq j}V_{(\lambda_i)}v_j, \qquad (\tilde v_j(0),\pa_t \tilde v_j(0))=(0,0).
$$
Let $\chi$ denote a smooth cut-off function with $\chi(r)=1$ for $r\leq 1$ and $\chi(r)=0$ for $r\geq 2$. To estimate $F(v_j)$, we decompose in three pieces $v_j^m$ for $m=1,2,3$ that each solve
$$
\pa_t^2 v_j^m-\Delta v_j^m+V_{(\lambda_j)}v_j^m=0,
$$
with data
$$
v_j^1(0)=\chi_{R_{j-1}^+}(1-\chi_{R_{j}^-/48})u_0, \qquad v_j^2(0)= (1-\chi_{R_{j-1}^+})u_0, \qquad v_j^3(0)= \chi_{R_{j}^-/48}u_0,
$$
(where $\chi_R(r)=\chi(r/R)$),
so that $v_j=v_j^1+v_j^2+v_j^3$, and hence
\be \label{odd:id:decompositionF}
F(v_j)=F(v_j^1)+F(v_j^2)+F(v_j^3).
\ee
By finite speed of propagation, $v_j^3(t,r)=0$ for $r\geq R_j^-/24+|t|$ and hence:
$$
 \left\| \indic_{\{r\geq R_j^-/24+|t|\}}F(v_j^3)\right\|_{L^1L^2}=0.
$$
We claim the following estimates, whose proofs are relegated to Step 4: 
\be \label{odd:bd:F}
 \left\| \indic_{\{r\geq R_j^-/24+|t|\}}F(v_j^1)\right\|_{L^1L^2}\lesssim \gamma^\alpha \| \nabla u_0\|_{Z_{-3,\lambdabf}},
\ee
and 
\be \label{odd:bd:F2}
\left\|\indic_{\{r\geq R_j^-/24+|t|\}} F(v_j^2)\right\|_{L^1L^2}\lesssim \sqrt{E_{out}}+\gamma^\alpha \| \nabla u_0\|_{Z_{-3,\lambdabf}}.
\ee
Hence, applying Lemma \ref{lem:linearmultiout} we infer:
$$
\sup_{t} \| (\tilde v_j(t),\pa_t \tilde v_j(t)) \|_{\mathcal H_{R_j^-+|t|}}\lesssim  \sqrt{E_{out}}+\gamma^\alpha \| \nabla u_0\|_{Z_{-3,\lambdabf}}
$$
and hence because of \eqref{odd:id:decompositioninner}:
$$
\sum_{\pm} \lim_{t\to \infty} \int_{|x|\geq R_{j}^-/24+|t|} |\nabla_{t,x} v_j(t)|^2dx \lesssim E_{out}+\gamma^{2\alpha }\| \nabla u_0\|_{Z_{-3,\lambdabf}}^2.
$$
Therefore, applying Lemma \ref{L:channelsevenunsoliton2} to $v_j$, we infer that there exist two constants $c_j,d_j$ such that:
\be \label{odd:id:decompositionu0lambdaj}
u_0=c_j \Upsilon_{(\lambda_j)}+d_j(\Lambda W)_{(\lambda_j)}+\bar u_{0,j}
\ee
with
\be \label{odd:bd:barw0j}
\| \indic_{\{r\geq R_j^-\}}\nabla \bar u_{0,j}\|_{Z_{-3,\lambda_j}}\lesssim \sqrt{E_{out}}+\gamma^\alpha \| \nabla u_0\|_{Z_{-3,\lambdabf}}.
\ee
As a consequence
\begin{equation*}
 c_j^2\int_{R_j^-}^{2R_j^-} |\nabla \Upsilon_{(\lambda_j)}|^2r^5dr\lesssim \int_{R_j^-}^{2R_j^-} |\nabla u_0|^2r^5dr+\int_{R_j^-}^{2R_j^-} |\nabla \bar u_{0,j}|^2r^5dr+d_j^2\int_{R_j^-}^{2R_j^-} |\nabla (\Lambda W)_{(\lambda_j)}|^2r^5dr.
\end{equation*}
Using the fact that for $r\approx R_j^-$ we have $|\nabla (\Lambda W)_{(\lambda_j)}|\lesssim \frac{1}{(R_j^-)^3} \frac{\lambda_{j+1}^4}{\gamma\lambda_j^4}$ and $|\nabla \Upsilon_{(\lambda_j)}|\gtrsim \frac{\sqrt{\gamma}}{(R_j^-)^{3}}\frac{\lambda_j^2}{\lambda_{j+1}^2}$ we get:
\begin{multline*}
 c_j^2\gamma \frac{\lambda_j^4}{\lambda_{j+1}^4}\lesssim \langle \log \gamma\rangle^2 \| \nabla u_0\|^2_{Z_{-3,\lambda_{\bf}}}+\left\langle \log \frac{\lambda_{j+1}}{\lambda_j\gamma^{1/4}}\right\rangle^2 \| \indic_{\{r\geq R_j^-\}}\nabla \bar u_{0,j}\|_{Z_{-3,\lambda_j}}^2
 +d_j^2\left( \frac{\lambda_{j+1}^4}{\gamma\lambda_j^4} \right)^2.
\end{multline*}
Combining with the bound \eqref{odd:bd:barw0j}, we deduce
\be \label{odd:bd:dj}
|c_j|\lesssim \frac{1}{\sqrt{\gamma}} \frac{\lambda_{j+1}^2}{\lambda_j^2} \left|\log \left(\frac{1}{\sqrt{\gamma}} \frac{\lambda_{j+1}^2}{\lambda_j^2}\right)\right| \left(\frac{1}{\gamma} \frac{\lambda_{j+1}^4}{\lambda_j^4} |d_j|+ \| \nabla u_0\|_{Z_{-3,\lambdabf}} +\sqrt{E_{out}} \right).
\ee
As $u_0=\Pi^\perp_{\dot{H}^1,\lambdabf}u_0 $ we have $\int \nabla u_0 \nabla (\Lambda W)_{(\lambda_j)}=0$, which, after truncation, using $|\nabla \Lambda W(r)|\lesssim r$ and $R_{j}^-=\frac{\lambda_{j+1}}{\gamma^{\frac14}}\leq \gamma^{\frac 34}\lambda_j$, gives:
$$
\int_{R_j^-\leq |x|} \nabla u_0 \nabla (\Lambda W)_{(\lambda_j)}=\int_{ |x|\leq R_j^-} \nabla u_0 \nabla (\Lambda W)_{(\lambda_j)}=O(\gamma^3|\log \gamma| \| \nabla u_0\|_{Z_{-3,\lambdabf}}).
$$
On the other hand, computing the left-hand side above using \eqref{odd:id:decompositioninner}, $ \int_{|x|\leq \frac{R_j^-}{\lambda_j}} |\nabla \Lambda W|^2\lesssim \frac{R_j^{-8}}{\lambda_j^8}\lesssim \gamma^6$ and the bound \eqref{odd:bd:barw0j} shows:
$$
\int_{R_j^-\leq |x|} \nabla u_0 \nabla (\Lambda W)_{(\lambda_j)}= d_j \int |\nabla \Lambda W|^2 (1+O(\gamma^6))+O\left(|c_j|+ \sqrt{E_{out}}+\gamma^\alpha \| \nabla u_0\|_{Z_{-3,\lambdabf}}\right).
$$
Combining the two identities above shows:
\be \label{odd:bd:cj-1}
| d_j|\lesssim |c_j|+ \sqrt{E_{out}}+\gamma^\alpha\| \nabla u_0\|_{Z_{-3,\lambdabf}}.
\ee
Combining the two inequalities \eqref{odd:bd:dj} and \eqref{odd:bd:cj-1} shows:
$$
| d_j|\lesssim  \sqrt{E_{out}}+\gamma^{\alpha}\| \nabla u_0\|_{Z_{-3,\lambdabf}},
$$
and
$$
|c_j|\lesssim \frac{1}{\sqrt{\gamma}} \frac{\lambda_{j+1}^2}{\lambda_j^2} \log \left(\frac{1}{\sqrt{\gamma}} \frac{\lambda_{j+1}^2}{\lambda_j^2}\right) \left( \| \nabla u_0\|_{Z_{-3,\lambdabf}} +\sqrt{E_{out}} \right)\lesssim \frac{1}{\gamma^{\frac 38}} \frac{\lambda_{j+1}^{\frac 32}}{\lambda_j^{\frac 32}}  \left( \| \nabla u_0\|_{Z_{-3,\lambdabf}} +\sqrt{E_{out}} \right).
$$
As a result, since $\| \nabla (\Lambda W)_{(\lambda_j)}\|_{Z_{-3,\lambda_j}}\lesssim 1$:
\be \label{odd:bd:dj2}
\| d_j \nabla (\Lambda W)_{(\lambda_j)}\|_{Z_{-3,\lambda_j}}\lesssim   \sqrt{E_{out}}+\gamma^\alpha \| \nabla u_0\|_{Z_{-3,\lambdabf}},
\ee
and, since for $r\geq R_j^+$ we have $|\nabla \Upsilon_{(\lambda_j)}|\lesssim \frac{1}{r^3} \frac{\lambda_j^2}{R_j^{+2}}=\frac{1}{r^3}\frac{\lambda_j}{\lambda_{j+1}}$, we infer:
\begin{multline} \label{odd:bd:cj2}
\| \indic_{\{r\geq R_j^+\}}c_j \nabla (\Upsilon W)_{(\lambda_j)}\|_{Z_{-3,\lambda_j}}\\
\lesssim |c_j|\frac{\lambda_j}{\lambda_{j+1}}\lesssim \frac{1}{\gamma^{\frac38}} \frac{\lambda_{j+1}^{\frac 12}}{\lambda_j^{\frac 12}}  \left( \| \nabla u_0\|_{Z_{-3,\lambdabf}} +\sqrt{E_{out}} \right)\lesssim \gamma^{\frac 18} \| \nabla u_0\|_{Z_{-3,\lambdabf}} +\gamma^{\frac 18}\sqrt{E_{out}}.
\end{multline}
Injecting \eqref{odd:bd:barw0j}, \eqref{odd:bd:dj2} and \eqref{odd:bd:cj2} in \eqref{odd:id:decompositionu0lambdaj} shows:
$$
\big\| \indic_{\{r\geq R_j^+\}} \nabla u_{0} \big\|_{Z_{-3, \lambda_j}}\lesssim \sqrt{E_{out}}+\gamma^\alpha \| \nabla u_0\|_{Z_{-3,\lambdabf}}.
$$
Combined with the induction hypothesis $\big\| \indic_{\{r\geq R_{j-1}^+\}} \nabla u_{0} \big\|_{Z_{-3,\lambdabf_{j-1}}}\lesssim \sqrt{E_{out}}+\gamma^\alpha \| \nabla u_0\|_{Z_{-3,\lambdabf}}$, and the inequality
$$ \big\|\indic_{\{r>R_j^+\}}f\big\|_{Z_{-3,\lambdabf_{j}}}\lesssim \big\|\indic_{\{r>R_{j-1}^+\}} f\big\|_{Z_{-3,\lambdabf_{j-1}}}+\big\|\indic_{\{r>R_j^+\}} f\big\|_{Z_{-3,\lambda_j}}$$
we obtain
\eqref{odd:bd:ionductionprop} for $j$.\\

\textbf{Step 3}. \emph{Proof of the induction property for $j=1$ and $j=J$}. For $j=1$, the proof is the exact same one as in Step 2, but with the simplification that $v_j^2=0$ in the decomposition \eqref{odd:id:decompositionF}, so that all computations and estimates of Step 2 are still valid for $j=1$ with the convention that $R_{j-1}^-=R_{j-1}^+=\infty$, and hence the result holds true for $j=1$.

For $j=J$, the proof is again the exact same one, but with the simplification that $c_j=0$ in \eqref{odd:id:decompositionu0lambdaj} since Lemma \ref{L:channelsevenunsoliton2} is applied for $v_J$ for $r\geq R_{J}^-=0$. All computations and estimates of Step 2 are then still valid for $j=J$ with the convention that $R_{J}^-=R_{J}^+=0$, and hence the result holds true for $j=J$.\\

\textbf{Step 4}. \emph{Estimating the error terms}. In this step we prove \eqref{odd:bd:F} and \eqref{odd:bd:F2}.

We start with the proof of \eqref{odd:bd:F}. Let $1\leq i \leq J$ with $i\neq j$. By \eqref{localisingZ-32} and \eqref{bd:channelsevenmultiinter} we have:
$$
\| \nabla v_1^j(0)\|_{Z_{-3,\lambda_i}}\leq \| \nabla v_1^j(0)\|_{Z_{-3,\lambdabf}}\lesssim \| \nabla u_0 \|_{Z_{-3,\lambdabf}}.
$$
If $i>j$, then notice that $v^1_j(0,r)=0$ for $r\leq  \frac{\lambda_i}{48\gamma^{1/4}}\leq \frac{R_j^-}{48}$. Applying \eqref{interactionsolitonZ-33} and a scaling argument, we obtain using the above inequality:
$$
\| \indic_{\{r\geq |t|\}} V_{(\lambda_i)}v^1_j\|_{L^1L^2}\lesssim \gamma^{\frac 14} \| \nabla v_j^1\|_{Z_{-3,\lambda_i}}\lesssim \gamma^{\frac 14} \| \nabla u_0 \|_{Z_{-3,\lambdabf}}.
$$
If $i<j$, then notice that $v^1_j(0,r)=0$ for $r\geq 2\sqrt{\gamma} \lambda_i \geq 2R_{j-1}^+$. Applying \eqref{interactionsolitonZ-32}, we obtain in this case:
$$
\| \indic_{\{r\geq |t|\}} V_{(\lambda_i)}v^1_j\|_{L^1L^2}\lesssim \gamma^{\frac{1}{16}} \| \nabla u_0 \|_{Z_{-3,\lambdabf}}.
$$
The two inequalities above imply \eqref{odd:bd:F}.

We now prove \eqref{odd:bd:F2}. Let $1\leq i \leq J$ with $i\neq j$. By \eqref{localisingZ-32} and the induction hypothesis \eqref{odd:bd:ionductionprop} at $j-1$:
$$
\| \nabla v_j^2(0)\|_{Z_{-3,\lambdabf_{j-1}}}\lesssim \| \indic_{\{r\geq R_{j-1}^+\}} \nabla u_0\|_{Z_{-3,\lambdabf_{j-1}}}\lesssim  \sqrt{E_{out}}+\gamma^\alpha \| \nabla u_0\|_{Z_{-3,\lambdabf}}.
$$
As $v_j^2(0,r)=0$ for $r\leq R_{j-1}^+$, by \eqref{bd:channelsevenmultiinter} and \eqref{bd:channelsevenmultiinter2}:
$$
\| \nabla v_j^2(0)\|_{Z_{-3,\lambda_i}}\leq \| \nabla v_j^2(0)\|_{Z_{-3,\lambdabf}} \lesssim  \| \nabla v_j^2(0)\|_{Z_{-3,\lambdabf_{j-1}}}.
$$
Combining the two inequalities above,
$$
\| \nabla v_j^2(0)\|_{Z_{-3,\lambda_i}}\lesssim  \sqrt{E_{out}}+\gamma^\alpha \| \nabla u_0\|_{Z_{-3,\lambdabf}}.
$$
Applying \eqref{interactionsolitonZ-31}, we obtain:
$$
\| \indic_{\{r>|t|\}} V_{(\lambda_i)} v_j^2\|_{L^1L^2}\lesssim  \sqrt{E_{out}}+\gamma^\alpha \| \nabla u_0\|_{Z_{-3,\lambdabf}}.
$$
This implies \eqref{odd:bd:F2}, what ends the proof of the Lemma.

\end{proof}

\section{Channels of energy estimate around the ground state in $8$ dimensions} \label{sec:8d}

In this section we consider the linearised wave equation in eight dimensions
\be \label{8dLW}
\pa_t^2u_L-\Delta u_L+Vu_L=0, \qquad \vec u_L(0)\in \mathcal H(\mathbb R^8),
\ee
where $V=-\frac 53 W^{\frac 23}$ with $W=\left(1+\frac{|x|^2}{48} \right)^{-3}$. The function $\Lambda W=x.\nabla W +3 W$ belongs to the kernel of $-\Delta +V$. We claim an analogous channels of energy estimate to \eqref{bd:channelunsoliton}:

\begin{theorem} \label{th:channelsunsoliton8d}
There exists $C>0$ such that any radial solution $u_L$ of \eqref{8dLW} satisfies:
$$
\big\| \Pi_{L^2}^\perp u_1\big\|_{Z_{-4}}+\big\| \Pi_{\dot H^1}^\perp u_0 \big\|_{\dot H^1} \leq C \sqrt{E_{\textup{out}}}.
$$

\end{theorem}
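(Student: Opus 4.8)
The plan is to mirror exactly the structure used for the six-dimensional Theorem \ref{th:channelsunsoliton}, exploiting that in eight dimensions the free-wave exterior-energy estimate \eqref{free_exterior} gives control of $u_1$ in $L^2$ (since $N=8\equiv 4\bmod 4$, so the $\dot H^1$-component is the one that survives) rather than $u_0$. Concretely, I would split the solution into its odd-in-time and even-in-time parts. For the \emph{even in time} part, where $u_1=0$, the first-order term vanishes and the relevant non-radiative object is $\Lambda W$ itself: the channels estimate \eqref{free_exterior} in $8$ dimensions controls $\|\Pi^\perp_{\dot H^1}u_0\|_{\dot H^1}$ directly in terms of $\sqrt{E_{\textup{out}}}$ for the free equation, and the perturbative argument (Duhamel, Strichartz, and the smallness of $\|V\|_{L^2_tL^4_x(r\geq R'+|t|)}$ as $R'\to\infty$, using $W\in L^4(\mathbb R^8)$, together with a rigidity lemma asserting that a non-radiative solution with $u_1=0$ must have $u_0\in\text{Span}(\Lambda W)$) promotes this to \eqref{8dLW}. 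Because no logarithmic resonance like $\Upsilon$ intervenes for the $u_0$-component in eight dimensions — the second element $\Gamma$ of the kernel of $-\Delta+V$ in $d=8$ decays like $r^{-6}$ at infinity and genuinely lies in $\dot H^1_R$ away from the origin, so the resonant obstruction is absent — one gets the clean $\dot H^1$ estimate rather than a $Z$-norm estimate.

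For the \emph{odd in time} part, where $u_0=0$, the solution is $u_1$ times $t$ plus corrections; here the relevant non-radiative solution is $t\Lambda W$, and one must also track the degenerate behaviour at the origin encoded by $\widetilde\Gamma$, exactly as in Case 3 of Lemma \ref{L:channelsoddunsoliton}. The estimate to prove is $\|\Pi^\perp_{L^2}u_1\|_{Z_{-4}}\lesssim\sqrt{E_{\textup{out}}}$, and the weakening to the $Z_{-4}$ norm (rather than plain $L^2$) is forced precisely because the analogue of $\Upsilon$ — the generalized-kernel element solving $(-\Delta+V)\Upsilon=\Lambda W$ — now produces, for the \emph{first-order-in-time} data $u_1$, a function that just barely fails to be in $L^2_R$, mimicking the failure of $\vec\Upsilon(t)\in\mathcal H_R$ seen in six dimensions. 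I would therefore:
\begin{enumerate}
\item establish the $8$-dimensional analogues of the Strichartz estimates and Lemma \ref{L:CK1.3} and Proposition \ref{P:CK1.4} (the exterior channels with right-hand side), which are essentially the same proofs using the $8$-dimensional radial representation;
\item prove the rigidity lemmas: a non-radiative solution of \eqref{8dLW} with $u_0=0$ has $u_1\in\text{Span}(\Lambda W)$, and one with $u_1=0$ has $u_0\in\text{Span}(\Lambda W)$, by the same Duhamel-plus-support-propagation argument as in Lemma \ref{L:rigiditylinearoddintime};
\item run the compactness/contradiction argument of Lemma \ref{L:channelsoddunsoliton} to get, for the odd part, an $L^2_R$-type channels estimate on $\Pi^\perp_{L^2_R}u_1$ on all exterior cones $\{r\ge R+|t|\}$, splitting into the three cases $R_n=0$ or $R_n\ge 1$, $R_n\to R_\infty\in(0,1]$, and $R_n\to 0$, with $\widetilde\Gamma$ handling the last case;
\item construct the generalized-kernel element $\Upsilon$ in $d=8$, determine its asymptotics at $0$ and $\infty$ (the exponents shift by two relative to $d=6$; I expect $\Upsilon\sim c_0 r^{-6}$ at $0$ and a critically-failing behaviour at $\infty$ that dictates the $Z_{-4}$ loss), and carry out the elliptic estimate (analogue of Lemma \ref{L:ellipticunsoliton}) and the $\partial_t u_L\to u_0$ and $u_0\to\nabla u_0$ bootstrap lemmas (analogues of Lemmas \ref{L:estimateufromutunsoliton}, \ref{lem:estimateufromutunsoliton2}) — here, however, the roles are transposed: one uses the $u_0$-channels for the even part and the $u_1$-channels (step 3) to reconstruct the $Z_{-4}$ bound on $u_1$, reversing the flow of the six-dimensional argument.
\end{enumerate}

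The main obstacle I anticipate is the bookkeeping of the correct $Z$-norm exponent and the precise asymptotics of $\Upsilon$ and $\Gamma$ in dimension eight: one must verify that it is genuinely the $u_1$-component (measured in $Z_{-4}$) and the $u_0$-component (measured cleanly in $\dot H^1$) that appear, which hinges on checking that $(-\Delta+V)$ in $d=8$ has its resonance at the level of first-order data — consistent with \eqref{free_exterior} giving $\|u_1\|_{L^2}\lesssim\sqrt{E_{\textup{out}}}$ failing, while $\|u_0\|_{\dot H^1}\lesssim\sqrt{E_{\textup{out}}}$ holds, in the free case for $N\equiv 4\bmod 4$. Once the asymptotics \eqref{bd:Upsilon}–\eqref{bd:Upsilon'} are correctly recomputed (with $r^{-6}$ in place of $r^{-4}$ near the origin and the critical logarithmic failure relocated), the dyadic/Cauchy–Schwarz estimates in the elliptic and energy lemmas go through verbatim, and the weighted-$L^2$-to-gradient upgrade and the final contradiction argument are routine adaptations. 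I would also need the $8$-dimensional versions of the auxiliary facts $W\in L^4$, $V=-\tfrac53 W^{2/3}\in L^2_tL^4_x(r\ge|t|)$ and $\lim_{R\to\infty}\|\mathbf 1_{\{R<|t|<|x|-R\}}W\|_{L^2L^4}=0$, all of which hold by the explicit form of $W$.
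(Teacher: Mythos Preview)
Your overall picture is right: in eight dimensions the free-wave channels control $u_0$ rather than $u_1$, so the clean $\dot H^1$ estimate lands on $\Pi^\perp_{\dot H^1}u_0$ while the $Z_{-4}$ weakening falls on $u_1$, and the roles of the six-dimensional argument are transposed. But step (3) of your plan is a genuine gap. In dimension $8\equiv 4\bmod 4$ there is \emph{no} free-wave exterior channels estimate for $u_1$ on cones $\{|x|>R+|t|\}$: the analogue of Proposition~\ref{P:CK1.4} in eight dimensions (coming from \cite{LiShenWei21P}) controls $u_0^\perp$ in $\dot H^1_R$ after projecting away from $\mathrm{Span}(r^{-6},r^{-4})$, and nothing comparable holds for $u_1$. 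Thus the compactness/contradiction argument of Lemma~\ref{L:channelsoddunsoliton}, which in six dimensions ultimately rested on the free-wave $u_1$ channels, has no foundation for the odd-in-time part here. Indeed, if step (3) succeeded it would yield $L^2$ control of $\Pi^\perp_{L^2}u_1$, strictly stronger than the $Z_{-4}$ bound in the statement, contradicting the counterexample philosophy of Remark~\ref{re:channelsunsoliton}.

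What the paper does instead is use \emph{only} the even-in-time channels as input, but pushed to all exterior cones. The compactness/contradiction argument (Lemma~\ref{lem:iteratedkernel}) is run for even solutions and gives $\|\Pi^\perp_{\dot H^1_R}u_L(0)\|_{\dot H^1_R}\lesssim\sqrt{E_{\textup{out}}}$ for all $R\ge0$; note that for $R>0$ one must project away not just from $\Lambda W=T_0^\infty$ but also from the generalised-kernel element $T_1^\infty$ solving $(-\Delta+V)T_1^\infty=-T_0^\infty$ (and for $0<R<1$ from a truncation of $T_1^0$ as well), because $S_2^\infty=T_1^\infty+\tfrac{t^2}{2}T_0^\infty$ is an even non-radiative solution with $\vec S_2^\infty(0)\in\HHH_R$. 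This is your $\Upsilon$, but it enters on the $u_0$ side, not the $u_1$ side. One then bootstraps from this $\dot H^1_R$-control on $u_L(t)$ (for all $t$ and all $R>|t|$) to a $Z_{-4}$ bound on $\partial_tu(0)$: first to an averaged bound $\|\mathcal A(\Pi^\perp_{L^2}\partial_tu(0))\|_{Z_{-2}}$ via the dyadic fundamental-theorem-of-calculus argument of Lemma~\ref{L:estimateufromutunsoliton}, and then to the pointwise $Z_{-4}$ bound via a gain-of-regularity step. The key device here, which your proposal does not anticipate, is the averaging operator $\mathcal A f(r)=\int_r^\infty\rho f(\rho)\,d\rho$ satisfying the conjugation $\Delta_8=\mathcal A^{-1}\Delta_6\mathcal A$: setting $v=\mathcal A\partial_t u$ produces an even-in-time solution of a wave-type equation in six dimensions, to which the gain-of-regularity argument of Lemma~\ref{lem:estimateufromutunsoliton2} applies verbatim. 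No separate odd-in-time channels are ever invoked.
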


We believe this result can be extended to all dimensions $N\equiv 4\mod 4$ up to a technical refinement of the proof. We only sketch the proof of Theorem \ref{th:channelsunsoliton8d}, as a consequence of Lemmas \ref{lem:iteratedkernel}, \ref{8d:L:estimateufromutunsoliton} and \ref{8d:lem:estimateufromutunsoliton2}. We highlight only the main differences with that, similar, of Theorem \ref{th:channelsunsoliton}. We believe an extension for an estimate around a multisoliton like \eqref{bd:channelsmultisoliton} could be proved along the lines of Section \ref{subsec:channels_multi}.

\subsection{Channels of energy for even in time solutions}

The main difference between six and eight dimensions is that the analogue of Proposition \ref{P:CK1.4} only holds true for even in time functions.

\begin{proposition}[Channels in 8d]
 Let $u\in \RR\times \RR^8$, solve $\pa_t^2 u-\Delta u=f$ with $\vec u(0)\in \mathcal H$ and $f\in L^1L^2$. Fix $R>0$ and write $u_0=\frac{c_1}{r^6}+\frac{c_2}{r^4}+u_0^{\bot}$, where $\int_{R}^{\infty} \pa_r u_0^{\bot} r^{-7+2k} r^7dr=0$ for $k=0,1$. Then,
\begin{equation} \label{bd:freewaveschannels8d}
 \left\|u_0^{\bot}\right\|_{\dot H^1_R}\lesssim\sqrt{E_{\textup{out}}}+\|f\|_{L^1_tL^2_r\left( \Ce_{0,R} \right)}.
\end{equation}
 \end{proposition}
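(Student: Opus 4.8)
The plan is to reduce \eqref{bd:freewaveschannels8d} to the known free-wave channel-of-energy estimate in eight dimensions and then absorb the inhomogeneous term using the finite-speed-of-propagation trick already employed in Proposition \ref{P:CK1.4}. Concretely, first I would write $u = \tu + v$ on the exterior cone $\Ce_{0,R}$, where $\tu$ solves the homogeneous wave equation $\Box \tu = 0$ with data $(u_0,u_1)$ and $v$ solves $\Box v = f\indic_{\Ce_{0,R}}$ with zero data; by finite speed of propagation these agree with $u$ on $\Ce_{0,R}$. The radial free-wave channel estimate in $\RR^8$ (this is the $N\equiv 6 \bmod 4$... no, $N=8\equiv 4\bmod 4$ case of \eqref{free_exterior}, in the refined exterior-cone form proved in \cite{KeLaLiSc15,DuKeMaMe21P,LiShenWei21P}) controls the $\dot H^1_R$ norm of the part of $\tu_0$ orthogonal to the two-dimensional space of non-radiative profiles $\{r^{-6}, r^{-4}\}$ — precisely the functions killed by the two vanishing-moment conditions on $\pa_r u_0^\bot$ stated in the proposition. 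That gives $\|u_0^\bot\|_{\dot H^1_R} = \|\tu_0^\bot\|_{\dot H^1_R} \lesssim \lim_{t\to\infty}\|\vec\tu(t)\|_{\dot H^1_{R+|t|}\times L^2_{R+|t|}}$.

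Next I would bound the right-hand side by the exterior energy of $u$ plus an error coming from $v$. Since $\vec u = \vec\tu + \vec v$ on the cone, $\lim_{t\to\infty}\|\vec\tu(t)\|_{\dot H^1_{R+|t|}\times L^2_{R+|t|}} \le \sqrt{E_{\textup{out}}^+} + \sup_t \|\vec v(t)\|_{\dot H^1_{R+|t|}\times L^2_{R+|t|}}$. The last supremum is controlled by the Strichartz/energy estimate \eqref{CK1} (its eight-dimensional analogue, which holds with the same admissible pair modulo the dimension-dependent exponent) applied to $v$, giving $\sup_t\|\vec v(t)\|_{\mathcal H} \lesssim \|f\indic_{\Ce_{0,R}}\|_{L^1_tL^2_r} = \|f\|_{L^1_tL^2_r(\Ce_{0,R})}$, exactly as in the chain of inequalities in the proof of Proposition \ref{P:CK1.4}. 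Combining the three displays yields \eqref{bd:freewaveschannels8d}.

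The one genuinely new point compared with Proposition \ref{P:CK1.4} — and the reason the proposition is stated for $u_0$ rather than $u_1$, and without the $\sum_\pm$ — is that in dimensions $N\equiv 4\bmod 4$ the free radial channel-of-energy estimate only holds for the position component $u_0$ (see \eqref{free_exterior}), and even then the correct statement requires quotienting by a two-dimensional space of stationary non-radiative profiles rather than the one-dimensional space $\{r^{-(N-2)}\}$ appearing in low odd dimensions. So the main thing to get right is to cite (or record) the sharp exterior-cone version of \eqref{free_exterior} in $\RR^8$ in exactly the form "for $u_0$ orthogonal in $\dot H^1_R$ to $\mathrm{Span}(r^{-6},r^{-4})$ one has $\|u_0\|_{\dot H^1_R}\lesssim \sqrt{E_{\textup{out}}}$", matching the two moment conditions $\int_R^\infty \pa_r u_0^\bot\, r^{-7+2k} r^7 dr=0$ for $k=0,1$ used here; this is the step I expect to require the most care, whereas the splitting $u=\tu+v$ and the Strichartz bound on $v$ are routine and identical to the six-dimensional argument.
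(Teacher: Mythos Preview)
Your proposal is correct and follows essentially the same approach as the paper: the paper simply cites \cite{LiShenWei21P} for the $f=0$ case and states that the extension to nonzero $f$ is done as in Proposition~\ref{P:CK1.4}, which is exactly the splitting $u=\tu+v$ and Strichartz/energy argument you describe. Your identification of the two-dimensional non-radiative space $\mathrm{Span}(r^{-6},r^{-4})$ as the correct object to quotient by in eight dimensions is also on the mark.
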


\begin{proof}

The estimate for $f=0$ is proved in \cite{LiShenWei21P}. The extension for nonzero $f$ is similar to the proof of Proposition \ref{P:CK1.4}.

\end{proof}

Counter examples to a channel of energy estimate for Equation \eqref{8dLW} are built on the generalised kernel of $-\Delta+V$. Namely, in eight dimensions another radial zero of $-\Delta +V$ is given by a function $\Gamma $ with
\begin{equation}\label{8d:id:asymptoticGamma}
\Gamma(r)\sim cr^{-6}\quad \mbox{as }r\rightarrow 0, \qquad \mbox{ and }\qquad \Gamma(r)\sim c' \quad \mbox{as }r\rightarrow \infty.
\end{equation}

We define $T^\infty_{0}=\Lambda W$, $T^0_0=\Gamma$, and $T^\infty_1$ and $T^0_1$ by the following Lemma:

\begin{lemma} \label{8d:lem:generalisedkernel}

There exist two functions $T^\infty_1$ and $T^0_1$ solving:
$$
(-\Delta +V)T^\infty_1=-T^\infty_0\qquad  \mbox{and}\qquad (-\Delta +V) T^0_1=-T^0_0,
$$
that satisfy for $c^\infty_{1,0},c^\infty_{1,\infty},c^0_{1,0},c^0_{1,\infty},e^0_0, \tilde c^0_{0,0}\neq 0$:
\begin{align*}
&T^\infty_1(r)\underset{r\to \infty}{\sim} c^\infty_{1,\infty} r^{-4}, \qquad T^\infty_1 (r) \underset{r\to 0}{\sim} c^\infty_{1,0} r^{-6},\\
&T^0_1(r)\underset{r\to \infty}{\sim} c^0_{1,\infty}r^2 , \qquad T^0_1 (r) \underset{r\to 0}{\sim} c^0_{1,0} r^{-4},\\
&\tilde T^0_0(r)=T^0_0-e^0_0T^\infty_1(r)\underset{r\to 0}{\sim} \tilde c^0_{0,0}r^2.
\end{align*}
\end{lemma}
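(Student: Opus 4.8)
The plan is to construct $T^\infty_1$ and $T^0_1$ by the classical variation-of-parameters formula for the ODE $(-\Delta+V)u = -T$, using the known pair of homogeneous solutions $T^\infty_0 = \Lambda W$ and $T^0_0 = \Gamma$ together with their Wronskian, exactly as $\Upsilon$ was built in the six-dimensional Lemma following equation \eqref{bd:Upsilon}. In radial coordinates in $\mathbb R^8$ the operator is $-r^{-7}\partial_r(r^7\partial_r) + V$, and one has a Wronskian relation $\Gamma(r)\Lambda W'(r) - \Lambda W(r)\Gamma'(r) = c\,r^{-7}$ for some $c\neq 0$ (the analogue of \eqref{id:LambdaWGammawronskian}), which follows since both functions solve the homogeneous equation. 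First I would record the asymptotics of $\Lambda W$ and $\Gamma$ and their first derivatives as $r\to 0$ and $r\to\infty$: from $W = (1+|x|^2/48)^{-3}$ one gets $\Lambda W(r) = c_0 + O(r^2)$, $\Lambda W'(r) = O(r)$ near $0$ and $\Lambda W(r) = d\, r^{-6} + O(r^{-8})$, $\Lambda W'(r) = -6d\,r^{-7}+O(r^{-9})$ near infinity; and \eqref{8d:id:asymptoticGamma} gives $\Gamma(r) = c\,r^{-6}+O(r^{-4})$, $\Gamma'(r) = -6c\,r^{-7}+O(r^{-5})$ near $0$ and $\Gamma(r) = c'+O(r^{-2})$, $\Gamma'(r) = O(r^{-3})$ near infinity. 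These follow by feeding the leading-order ansatz into the ODE.

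Next, I would set
$$
T^\infty_1(r) = -\Lambda W(r)\int_0^r \Lambda W(s)\Gamma(s)s^7\,ds - \Gamma(r)\int_r^\infty \Lambda W(s)^2 s^7\,ds,
$$
up to normalising constants dictated by the Wronskian, so that $(-\Delta+V)T^\infty_1 = -T^\infty_0 = -\Lambda W$; the integrals converge because $\Lambda W^2 s^7 \sim s^{-5}$ at infinity and $\Lambda W\Gamma s^7 \sim s$ near $0$. Then the asymptotics claimed for $T^\infty_1$ follow by direct estimation of each of the two terms using the asymptotics recorded above: near infinity the dominant contribution is $\Gamma(r)\cdot(\text{finite constant})$, but $\Gamma \to c'\neq 0$, so one must instead track the cancellation — more carefully, near infinity $\int_r^\infty \Lambda W^2 s^7\,ds \sim C r^{-4}$ and $\Lambda W(r)\int_0^r\Lambda W\Gamma s^7\,ds$ is $O(r^{-4})$ as well, giving $T^\infty_1(r)\sim c^\infty_{1,\infty}r^{-4}$; near $0$, $\Lambda W(r)\sim c_0$ and $\int_r^\infty \Lambda W^2 s^7\,ds$ is a finite nonzero constant while $\Gamma(r)\sim c\,r^{-6}$, giving $T^\infty_1(r)\sim c^\infty_{1,0}r^{-6}$. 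Symmetrically,
$$
T^0_1(r) = -\Lambda W(r)\int_0^r \Gamma(s)^2 s^7\,ds - \Gamma(r)\int_r^\infty \Lambda W(s)\Gamma(s)s^7\,ds
$$
(again up to normalisation) solves $(-\Delta+V)T^0_1 = -\Gamma$, and its asymptotics $c^0_{1,0}r^{-4}$ near $0$ and $c^0_{1,\infty}r^2$ near infinity follow the same way; here one must check the lower-order integral $\int_0^r \Gamma^2 s^7\,ds \sim C r^{-4}$ converges and picks the right power. Finally, $\tilde T^0_0 = T^0_0 - e^0_0 T^\infty_1$ is defined by choosing $e^0_0$ to cancel the $r^{-6}$ singularity of $\Gamma$ against that of $T^\infty_1$ near $0$ (possible since both coefficients are nonzero); since $(-\Delta+V)\tilde T^0_0 = e^0_0 \Lambda W$ near $0$ and the remaining homogeneous solution behaves like $r^2$, one gets $\tilde T^0_0(r)\sim \tilde c^0_{0,0}r^2$.

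The main obstacle is bookkeeping rather than conceptual: one must verify that all the definite integrals converge at both endpoints and that the leading powers produced by variation of parameters are exactly the claimed ones, with the subleading terms (the $O(r^{-4}\log r)$-type corrections that appeared in the six-dimensional case) not upgrading to the leading order. In particular one should check the nonvanishing of each coefficient $c^\infty_{1,0}, c^\infty_{1,\infty}, c^0_{1,0}, c^0_{1,\infty}, e^0_0, \tilde c^0_{0,0}$, which reduces to the nonvanishing of $\Lambda W$'s and $\Gamma$'s leading coefficients together with the sign-definiteness of the convergent integrals $\int \Lambda W^2 s^7\,ds$ and $\int \Gamma^2 s^7\,ds$. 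These are all routine once the asymptotics are in hand, exactly parallel to the argument for $\Upsilon$; I would state them as "by direct computation" as the six-dimensional lemma does.
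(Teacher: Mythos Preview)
Your approach is exactly what the paper intends by ``a standard analysis of the underlying second order ODEs'': variation of parameters with the fundamental pair $(\Lambda W,\Gamma)$, mirroring the construction of $\Upsilon$ in six dimensions. Two points need correction, though both are of the bookkeeping kind you anticipate.

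First, your formula for $T^0_1$ is ill-defined as written: $\Gamma(s)^2 s^7\sim s^{-5}$ near $s=0$, so $\int_0^r\Gamma^2 s^7\,ds$ diverges at the lower endpoint, and $\Lambda W(s)\Gamma(s)s^7\sim s$ near infinity, so $\int_r^\infty\Lambda W\Gamma\,s^7\,ds$ diverges at the upper endpoint. You must instead take both integrals from a fixed interior point (say $s=1$), and then the claimed behaviours $r^{-4}$ at the origin and $r^{2}$ at infinity follow from the same term-by-term expansion, possibly after adjusting by a multiple of $\Lambda W$ or $\Gamma$.

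Second, your justification for $\tilde T^0_0\sim r^2$ appeals to a ``remaining homogeneous solution'' behaving like $r^2$, but no such solution exists: the Frobenius indices at $r=0$ are $0$ and $-6$, so the homogeneous solutions behave like a nonzero constant and like $r^{-6}$. Cancelling the $r^{-6}$ part by the choice of $e^0_0$ does not, by itself, rule out a nonzero constant term in $\tilde T^0_0$. What actually happens is that, with your specific $T^\infty_1$, writing $I_0=\int_0^\infty\Lambda W^2 s^7\,ds>0$ one has the exact identity
\[
T^\infty_1+I_0\,\Gamma \;=\; -\Lambda W(r)\int_0^r\Lambda W\,\Gamma\,s^7\,ds+\Gamma(r)\int_0^r\Lambda W^2 s^7\,ds,
\]
and a direct expansion of the right-hand side near $0$ gives $-\tfrac{3}{8}\Lambda W(0)^2\,c\,r^2+O(r^4)$, which has no constant term and a nonzero $r^2$ coefficient. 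Hence with $e^0_0=-1/I_0$ one obtains $\tilde T^0_0=(T^\infty_1+I_0\Gamma)/I_0\sim \tilde c^0_{0,0}r^2$ with $\tilde c^0_{0,0}\neq 0$. So the conclusion is correct, but the vanishing of the constant term is a feature of the explicit integral representation, not of the ODE alone.
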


\begin{proof}

It is a standard analysis of the underlying second order ODEs in the radial variable.

\end{proof}

Three explicit solutions of $\pa_t^2u_L-\Delta u_L+Vu_L=0$ such that $\lim_{t\to \pm \infty}  \int_{|x|>R+ |t|} |\nabla_{t,x}u_L(t,x)|^2dx=0$ for any $R>0$ are then given by:
$$
S^\infty_0=T^\infty_0, \qquad S^\infty_1=tT^\infty_0, \qquad S^\infty_2=T^\infty_1+\frac{t^2}{2}T^\infty_0.
$$
These are the only ones:

\begin{lemma} \label{8d:lem:classificationevennonradiative}

Let $R\geq 0$. Assume that $u_L$ is an even in time solution \eqref{8dLW} such that 
$$\lim_{t\to \pm \infty}  \int_{|x|>R+ |t|} |\nabla_{t,x}u_L(t,x)|^2dx=0.$$ Then:
\begin{itemize}
\item[(i)] If $R>0$, there exists $c_0,c_2\in \mathbb R$ such that $u_L=c_0 S^\infty_0+c_2 S^\infty_2$ on $\{|x|>R+|t|\}$.
\item[(ii)] If $R=0$, there exists $c_0\in \mathbb R$ such that $u_L=c_0 S^\infty_0$ for all $x,t$.
\end{itemize}
\end{lemma}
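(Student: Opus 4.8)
The plan is to reduce the classification of even-in-time non-radiative solutions of \eqref{8dLW} to the rigidity statement for the odd-in-time part together with an elliptic analysis, exactly mirroring how Lemma \ref{L:rigiditylinearoddintime} was established in six dimensions but now in the even-in-time setting where the channel of energy estimate \eqref{bd:freewaveschannels8d} is available. First I would, as in the proof of Lemma \ref{L:rigiditylinearoddintime}, reduce to $u_0$ having a prescribed behaviour by subtracting off the explicit solutions $S^\infty_0$ and $S^\infty_2$: after subtracting a suitable multiple of $S^\infty_2$ we may assume the coefficient $c_2$ of the $r^{-4}$-tail of $u_0$ at infinity vanishes, and after subtracting a multiple of $S^\infty_0$ we normalise the $r^{-6}$-tail. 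Then I would fix a large $R'>0$ and choose the constants $c_0,c_2$ so that $\pa_r(u_0-c_0\Lambda W-c_2 T^\infty_1)$ is $L^2_{R'}$-orthogonal to $r^{-7}$ and $r^{-5}$ on $\{r>R'\}$, using that $\Lambda W$ and $T^\infty_1$ have the appropriate $r^{-6}$ and $r^{-4}$ decay at infinity by \eqref{8d:id:asymptoticGamma}-type asymptotics and Lemma \ref{8d:lem:generalisedkernel} so that this is a nondegenerate linear system.

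The heart of the argument is then the same fixed-point/bootstrap as in the six-dimensional case. The solution of \eqref{8dLW} with initial data $(u_0-c_0\Lambda W-c_2 T^\infty_1,0)$ is $u_L(t) - c_0 S^\infty_0 - c_2 S^\infty_2 = u_L(t) - c_0\Lambda W - c_2(T^\infty_1 + \tfrac{t^2}{2}\Lambda W)$, which is again a solution of \eqref{8dLW} with zero exterior energy. Writing the equation as $\pa_t^2 v - \Delta v = -Vv$ and applying Proposition on channels in 8d (estimate \eqref{bd:freewaveschannels8d}) together with the Strichartz estimate \eqref{CK1}, I would get
\[
\|\pa_r v_0\|_{\dot H^1_{R'}} + \|v\|_{L^2L^4(r>R'+|t|)} \lesssim \|V v\|_{L^1L^2(r>R'+|t|)} \lesssim \|V\|_{L^2L^4(r>R'+|t|)}\, \|v\|_{L^2L^4(r>R'+|t|)},
\]
where $v = u_L - c_0 S^\infty_0 - c_2 S^\infty_2$ and I have used that the exterior energy of $v$ vanishes. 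Since $V = -\tfrac53 W^{2/3}$ and $W$ decays like $|x|^{-6}$, one has $V \in L^2_t L^4_x(r\geq|t|)$ with $\lim_{R'\to\infty}\|V\|_{L^2L^4(r>R'+|t|)}=0$, so for $R'$ large enough the above forces $v\equiv 0$ on $\{r>R'+|t|\}$, i.e. $u_L = c_0 S^\infty_0 + c_2 S^\infty_2$ there. Finally, as in the end of the proof of Lemma \ref{L:rigiditylinearoddintime}, the vanishing of the exterior energy in \eqref{NonRad} combined with the propagation of support result for solutions of \eqref{8dLW} with compactly supported data (the $8d$ analogue of \cite[Proposition 4.7]{DuKeMe20}) propagates the identity $u_L = c_0 S^\infty_0 + c_2 S^\infty_2$ down to $\{r>R+|t|\}$, giving part (i).

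For part (ii), when $R=0$ the extra solution $S^\infty_2$ is no longer admissible: its initial data $T^\infty_1$ behaves like $c^\infty_{1,0}r^{-6}$ near the origin by Lemma \ref{8d:lem:generalisedkernel}, hence $\nabla T^\infty_1 \notin L^2(\mathbb R^8)$ near $0$, so $\vec{S}^\infty_2(0)\notin \mathcal H$. Thus in the decomposition from part (i) with $R=0$ one must have $c_2=0$ (the coefficient $c_2$ would otherwise produce a non-$L^2$ gradient at the origin, contradicting $\vec u_L(0)\in\mathcal H$), leaving $u_L = c_0 S^\infty_0 = c_0 \Lambda W$ on $\{r>|t|\}$, and then the propagation of support argument upgrades this to all of $\mathbb R\times\mathbb R^8$, since the difference $u_L - c_0\Lambda W$ solves \eqref{8dLW}, has zero data outside $\{r>0\}$ hence compact... more precisely, has exterior energy zero from every point, forcing it to vanish identically. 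I expect the main obstacle to be the careful bookkeeping of the asymptotics at $0$ and $\infty$ of $\Lambda W$, $\Gamma$, $T^\infty_1$, $T^0_1$ to verify that the orthogonality conditions defining $c_0,c_2$ form a nondegenerate system and that the subtracted profiles have precisely the decay needed to both apply \eqref{bd:freewaveschannels8d} and to detect the obstruction at the origin when $R=0$; this is where the argument genuinely differs from six dimensions, since there are now two iterated kernel elements $T^\infty_1$ and a second scale $r^{-4}$ of tail to track rather than one.
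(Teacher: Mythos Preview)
Your proposal is correct and follows precisely the scheme the paper indicates: the paper's proof reads ``The proof works exactly as that of Lemma \ref{L:rigiditylinearoddintime}, using the estimate \eqref{bd:freewaveschannels8d} and the asymptotics of Lemma \ref{8d:lem:generalisedkernel},'' and you have faithfully carried out that scheme, including the orthogonalisation to $r^{-7}$ and $r^{-5}$ via $c_0\Lambda W+c_2 T^\infty_1$, the $L^2L^4$ bootstrap with $V\in L^2L^4(r>|t|)$, and the propagation of support down to $r>R$. Your treatment of part (ii) via the singularity of $T^\infty_1\sim r^{-6}$ at the origin is also the right mechanism; note only the minor typo $\|\partial_r v_0\|_{\dot H^1_{R'}}$ (you mean $\|v_0\|_{\dot H^1_{R'}}$), and that in the last sentence you can simply say: having obtained $u_0=c_0\Lambda W$ for $r>0$ and $u_1=0$, the identity $u_L\equiv c_0\Lambda W$ follows by uniqueness for \eqref{8dLW} in $\mathcal H$.
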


\begin{proof}

The proof works exactly as that of Lemma \ref{L:rigiditylinearoddintime}, using the estimate \eqref{bd:freewaveschannels8d} and the asymptotics of Lemma \ref{8d:lem:generalisedkernel}.

\end{proof}

We next introduce, for $\chi^0$ a cut-off with $\chi^0(r)=1$ for $r\leq 10$ and $\chi^0(r)=0$ for $r\geq 11$:
\begin{equation}\label{8d:id:defpiL2R1}
\Pi^\perp_{\dot H^1_R}=\left\{ \begin{array}{l l} \Pi_{\dot H^1_R} \left( \text{Span}(T^\infty_0,T^\infty_1) \right)^\perp \qquad \mbox{for }R\geq 1, \\ \Pi_{\dot H^1_R} \left( \text{Span}(T^\infty_0,T^\infty_1,\chi^0 T^0_1) \right)^\perp \qquad \mbox{for }0<R< 1,\\ \Pi_{\dot H^1} \left( \text{Span}(T^\infty_0) \right)^\perp \qquad \mbox{for }R=0.
  \end{array} \right.
\end{equation}

\begin{lemma} \label{lem:iteratedkernel}

There exists a constant $C>0$, such that for any even in time solution $u_L$ of \eqref{8dLW} and any $R\geq 0$:
$$
\| \pi_{\dot H^1_R}^\perp u_L(0)\|_{\dot H^1_R}^2 \leq C\lim_{t\to \infty} \int_{|x|>R+|t|} |\nabla_{t,x}u_L|^2dx.
$$

\end{lemma}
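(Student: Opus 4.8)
The strategy is to follow the scheme of Lemma \ref{L:channelsoddunsoliton}, reasoning by contradiction and extracting a non-radiative limit profile, but now in the energy space $\dot H^1$ rather than $L^2$, and using Lemma \ref{8d:lem:classificationevennonradiative} in place of the odd-in-time rigidity result. So assume the estimate fails: there exist radii $R_n\geq 0$ and even-in-time solutions $u_{L,n}$ of \eqref{8dLW} with $\| \pi^\perp_{\dot H^1_{R_n}} u_{L,n}(0)\|_{\dot H^1_{R_n}}=1$ while $\lim_{n\to\infty}\lim_{t\to\infty}\int_{|x|>R_n+|t|}|\nabla_{t,x}u_{L,n}|^2dx=0$. (Replacing $u_{L,n}$ by $\tfrac12(u_{L,n}(t)+u_{L,n}(-t))$ we may take the data of the form $(u_{0,n},0)$.) Extract subsequences so that one of three cases holds: $R_n=0$ for all $n$ (or $R_n\geq 1$ for all $n$) with $R_n\to R_\infty\in\{0\}\cup[1,\infty]$; $R_n\in(0,1)$ with $R_n\to R_\infty\in(0,1]$; or $R_n\in(0,1)$ with $R_n\to 0$. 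In each case decompose $u_{0,n}$ orthogonally (in $\dot H^1_{R_n}$) along $\mathrm{Span}(T^\infty_0)$, then $\mathrm{Span}(T^\infty_0,T^\infty_1)$, and in the small-$R$ regime additionally along $\chi^0 T^0_1$, matching the definition \eqref{8d:id:defpiL2R1}.

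\textbf{Key steps.} First, in the ``Case 1'' analysis ($R_n=0$ or $R_n\geq 1$): here the orthogonal part $v_{0,n}=\pi^\perp_{\dot H^1_{R_n}}u_{0,n}$ has $\|\nabla v_{0,n}\|_{L^2_{R_n}}=1$; extend it by the reflection of Remark \ref{R:ext} (or by $0$) to get a bounded sequence in $\dot H^1(\mathbb R^8)$, pass to a weak limit $v_0$, let $v_L$ solve \eqref{8dLW} with data $(v_0,0)$, and show $v_L$ is non-radiative on $\{|x|>R+|t|\}$ for every $R>R_\infty$ by the convex-hull/energy-estimate argument used in Lemma \ref{L:channelsoddunsoliton} (weak limits of initial data give closures of convex hulls of the evolutions, and exterior energies pass to the limit). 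By Lemma \ref{8d:lem:classificationevennonradiative}, $v_0$ is (on each exterior region) a combination of $T^\infty_0$ and, if $R_\infty>0$, also $T^\infty_1$; the matching of the coefficients across $R$ together with the orthogonality conditions inherited in the weak limit forces $v_0=0$, so $\nabla v_{0,n}\rightharpoonup 0$ in $L^2$. Then compare $u_{L,n}$ to the free wave $u_{F,n}$ with the same data (the potential perturbation $Vu_{L,n}$ is negligible in the exterior $L^2L^4$ norm since $W\in L^2_tL^4_x(r\geq|t|)$ in dimension $8$, exactly as \cite[Lemma 3.7]{DuKeMe20} or the Strichartz estimate of Lemma \ref{L:CK1.3}), so $u_{F,n}$ is asymptotically non-radiative on $\{|x|>R_n+|t|\}$; but \eqref{bd:freewaveschannels8d} (the $8$d free channel bound of Li--Shen--Wei) gives $\|\nabla v_{0,n}\|^2_{\dot H^1_{R_n}}\lesssim \lim_t\int_{|x|>R_n+|t|}|\nabla_{t,x}u_{F,n}|^2 + (\text{projection correction})$, and the correction terms — the $r^{-6}$ and $r^{-4}$ moments appearing in the $8$d proposition — vanish in the limit because $v_{0,n}\rightharpoonup 0$ and $R_\infty>0$ (or are absent when $R_n=0$). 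This contradicts $\|\nabla v_{0,n}\|_{\dot H^1_{R_n}}=1$. ``Case 2'' ($R_\infty\in(0,1]$) reduces to Case 1 after renormalizing by $\|v_{0,n}\|_{\dot H^1_{R_n}}\geq 1$, exactly as in Lemma \ref{L:channelsoddunsoliton}.

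\textbf{Case 3 and the main obstacle.} The delicate case is $R_n\to 0$, where the extra resonance $\chi^0 T^0_1$ (behaving like $r^{-4}$ near $0$) enters the projection $\pi^\perp_{\dot H^1_{R_n}}$ and, crucially, $T^0_1$ itself is \emph{not} in $\dot H^1$ near the origin, just as $\tGamma$ was not in $L^2$ in the $6$d argument. The plan is to mirror Steps 1--4 of Case 3 of Lemma \ref{L:channelsoddunsoliton}: let $(\chi^0 T^0_1)_L$ be the solution of \eqref{8dLW} with that data; show $\tfrac1C\leq \lim_t\int_{|x|>R+|t|}|\nabla_{t,x}(\chi^0 T^0_1)_L|^2\leq C$ uniformly for $R\in(0,1)$, the lower bound from Lemma \ref{8d:lem:classificationevennonradiative} (since $\chi^0T^0_1$ is not a combination of $T^\infty_0,T^\infty_2$), the upper bound by examining $\Xi_L=(\chi^0T^0_1)_L-(\text{explicit quadratic-in-}t\text{ ansatz built from } T^0_0, T^0_1, T^\infty_1)$ using $(-\Delta+V)(\chi^0T^0_1)$ being compactly supported away from $0$. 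Then expand $u_{0,n}$ successively along $T^\infty_0$, $T^\infty_1$, $\chi^0 T^0_1$ with $\dot H^1_{R_n}$-orthogonal remainder; use the uniform bounds just proved together with the $L^1(\mathbb R^8)$ integrability of the relevant products (the $8$d analogues of $\Lambda W\,\tGamma\in L^1$) to show all expansion coefficients are bounded, and that the remainder converges weakly to $0$ in $\dot H^1(r^{\text{weight}})$ for the appropriate weight, using Lemma \ref{8d:lem:classificationevennonradiative} as the rigidity input; finally run the free-wave comparison and invoke \eqref{bd:freewaveschannels8d}, checking that the two moment-corrections in the $8$d free channel estimate vanish because $T^\infty_1\sim c^0_\text{something}\,r^{-6}$-type cancellations let the integrals $\int \pa_r w_{F,n}(0)(r^{-7}-\text{combination of }\pa_rT^0_1,\pa_rT^\infty_1)r^7dr$ be controlled by $\|r^{-7}-\cdots\|_{\dot H^1_{R_n}}\to$ (a convergent quantity), exactly as in \eqref{bd:interproofchannel3}. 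The contradiction then comes from $\|w_{1,n}\|_{\dot H^1_{R_n}}\geq \|\pi^\perp_{\dot H^1_{R_n}}u_{0,n}\|_{\dot H^1_{R_n}}=1$ while the free channel bound forces it to $0$. The main difficulty throughout is bookkeeping: two generalized-kernel directions at infinity ($T^\infty_0,T^\infty_1$) plus one at the origin ($T^0_1$), plus the two moment corrections in \eqref{bd:freewaveschannels8d} (instead of one in $6$d), so the orthogonalization, the matching of coefficients across scales, and the verification that each correction term is negligible all have to be organized carefully; conceptually, though, nothing new beyond the $6$d scheme is needed.
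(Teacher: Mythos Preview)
Your proposal is correct and follows essentially the same approach as the paper, which simply states that the proof ``works exactly as that of Lemma \ref{L:channelsoddunsoliton}, combining the rigidity result of Lemma \ref{8d:lem:classificationevennonradiative}, the channels of energy estimate \eqref{bd:freewaveschannels8d} for $|x|>R+|t|$ with $R>0$, and the estimate $\frac12\|u_0\|_{\dot H^1}^2\leq E_{\textup{out}}^+$ for $R=0$ from \cite{CoKeSc14}.'' You have correctly identified the adaptations needed in eight dimensions: the projection now removes two directions $T^\infty_0,T^\infty_1$ for $R\geq 1$ (matching the two moment corrections $r^{-6},r^{-4}$ in \eqref{bd:freewaveschannels8d}) and an additional direction $\chi^0 T^0_1$ for $0<R<1$; in Case~3 the explicit ansatz for the upper bound on the exterior energy of $(\chi^0 T^0_1)_L$ is $\chi^0 T^0_1+\tfrac{t^2}{2}\chi^0 T^0_0$ (the cutoff confines the commutator errors to $\{10\leq r\leq 11\}$, so they contribute finitely on $\{r>|t|\}$), and the vanishing of the two moment corrections as $R_n\to 0$ follows since $\pa_r T^\infty_1\sim r^{-7}$ and $\pa_r T^0_1\sim r^{-5}$ near the origin, exactly reproducing (up to bounded remainders) the weights $r^{-7+2k}r^7$ for $k=0,1$ in the $\dot H^1_{R_n}$ orthogonality conditions.
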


\begin{proof}
The proof works exactly as that of Lemma \ref{L:channelsoddunsoliton}, combining the rigidity result of Lemma \ref{8d:lem:classificationevennonradiative}, the channels of energy estimate \eqref{bd:freewaveschannels8d} for $|x|>R+|t|$ with $R>0$, and the following estimate for $R=0$ established in \cite{CoKeSc14}:
$$
 \frac 12 \|u_0\|^2_{\dot H^1}\leq  E_{\textup{out}}^+.
$$

\end{proof}

\subsection{Channels of energy for odd in time solutions}

If $u_L$ is a solution of \eqref{8dLW} with initial data in $\HHH$, we denote:
\be \label{8d:patu2:id:deftildeY}
\| u_L\|_{\tilde Y}=\sup_{t\in \mathbb R, \ R> |t|} \| \Pi^\perp_{\dot H^1_R} u_L(t)\|_{\dot H^1_{R}}.
\ee
We introduce the following operators:
\be \label{8d:patu2:id:defA}
\mathcal A f(r)=\int_r^\infty \rho f(\rho)d\rho, \qquad \mathcal A^{-1}f(r)=-\frac{1}{r}\pa_r f(r).
\ee
They are used to state an averaged estimate in the following Lemma, and to lower the dimension in the proof of the next one.

\begin{lemma}[Estimating $\pa_t u_L$ in average from $u_L$ in $\dot H^1$] \label{8d:L:estimateufromutunsoliton}
Recall the notation \eqref{8d:patu2:id:deftildeY}. Assume that $u_L$ is a radial solution of \eqref{8dLW}. Then:
\be \label{bd:8dboundpatu0H-1}
\| \mathcal A(\Pi^\perp_{L^2} \pa_t u(0))\|_{Z_{-2}} \lesssim  \| u_L\|_{\tilde Y}.
\ee
\end{lemma}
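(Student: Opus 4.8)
The plan is to mirror the six-dimensional argument of Lemma~\ref{L:estimateufromutunsoliton}, descending one dimension via the operators $\mathcal{A}$, $\mathcal{A}^{-1}$ of \eqref{8d:patu2:id:defA}. First I would reduce to the case where $u_L$ is odd in time, so that $u_0 = 0$ and the data is $(0, u_1)$ with $u_1 = \pa_t u_L(0)$; by the time-reversal symmetry and the definition of the $\tilde Y$-norm, the odd part $u_-(t) = \tfrac12(u_L(t) - u_L(-t))$ satisfies $\|u_-\|_{\tilde Y} \le \|u_L\|_{\tilde Y}$, so this is harmless. Next, fix the dyadic scales $R_k = 2^k$ and $t_k = R_k/2$. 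Applying the fundamental theorem of calculus to $u_L$ between $t = 0$ and $t = t_k$ (using $u_L(0) = 0$), one gets
$$
u_L(t_k) = t_k\, \pa_t u_L(0) + \int_0^{t_k}(t_k - t)\,\pa_t^2 u_L(t)\,dt,
$$
and then substituting the equation $\pa_t^2 u_L = \Delta u_L - V u_L$ yields an elliptic-type identity for $\pa_t u_L(0)$ in terms of $u_L(t_k)$ and a time-averaged remainder. Crucially, on the region $r \ge R_k$ the components of $u_L$ along the non-radiative profiles $T^\infty_0 = \Lambda W$, $T^\infty_1$ (and $\chi^0 T^0_1$ for small $R$) are annihilated by the projection $\Pi^\perp_{\dot H^1_{R_k}}$ entering the definition of $\|u_L\|_{\tilde Y}$, so the remainder is controlled in a weighted $L^2$ by $\|u_L\|_{\tilde Y}$ up to logarithmic losses, exactly as in Step~1 of the proof of Lemma~\ref{L:estimateufromutunsoliton}.

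The role of $\mathcal{A}$ is the following. Writing $\pa_t u_L(0) = \tfrac{c_1}{r^6} + \text{(lower order)} + v_1$ with $v_1$ the part orthogonal to the kernel, one wants to estimate $\mathcal{A} v_1$ rather than $v_1$ itself; the point is that $-\tfrac1r \pa_r (\mathcal A v_1) = v_1$, so $\mathcal{A}$ converts the $L^2$-type control of $\nabla(\mathcal{A} v_1)$ into an $L^2$-control of $v_1$ weighted correctly, and it trades the $r^{-6}$ behaviour of the bad profile for the $r^{-4}$ resonance behaviour, which is precisely what the $Z_{-2}$ norm measures (note $\mathcal{A}$ applied to $r^{-4}$ gives a constant multiple of $r^{-2}$). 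Concretely I would set $w = \mathcal A(\Pi^\perp_{L^2}\pa_t u_L(0))$, observe that $w$ solves an elliptic equation $-\Delta_6 w + \widetilde V w = f + \pa_r g$ in six effective dimensions (the operator $\mathcal A$ intertwines the $8$d radial Laplacian with a $6$d one up to lower-order terms coming from $V$), with $\|f\|_{Z_{-4}} + \|g\|_{Z_{-3}} \lesssim \|u_L\|_{\tilde Y}$ from the dyadic remainder estimates above, and then invoke the elliptic estimate Lemma~\ref{L:ellipticunsoliton} to conclude $d_{Z_{-2}}(w, \text{Span of resonances}) \lesssim \|u_L\|_{\tilde Y}$. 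A separate argument — matching the constants $c_k$ across consecutive dyadic scales by testing the difference identity \eqref{id:computationconstants}-type relation against suitable cutoff bumps $\Phi_k$, $\tilde\Phi_k$ adapted to $T^\infty_0$, $T^\infty_1$ — shows these constants are in fact bounded (or grow at most logarithmically), removing the resonance ambiguity and upgrading $d_{Z_{-2}}$ to the full $Z_{-2}$ norm of $w$ after subtracting an admissible finite-dimensional piece, which is absorbed into $\Pi^\perp_{L^2}$.

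The main obstacle I expect is the bookkeeping of the intertwining of $\mathcal A$ with the potential term: while $\mathcal A$ exactly conjugates the principal part $-\pa_r^2 - \tfrac{7}{r}\pa_r$ (the $8$d radial Laplacian) into $-\pa_r^2 - \tfrac{5}{r}\pa_r$ (the $6$d one), the zeroth-order term $V u_L$ does not commute with $\mathcal A$, so one produces extra commutator terms $[\mathcal A, V]$ that must be shown to be lower order in the relevant weighted norms; controlling these, together with verifying that the profiles $T^\infty_0, T^\infty_1$ really are mapped under $\mathcal A$ into the $6$d resonances $\Lambda W_6, \Upsilon_6$ appearing in Lemma~\ref{L:ellipticunsoliton}'s conclusion (this is where Lemma~\ref{8d:lem:generalisedkernel}'s asymptotics are used), is the delicate part. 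The remaining steps — the dyadic decomposition, the FTC identity, the constant-matching, and the final assembly — are routine adaptations of Section~\ref{subsec:channelsunsoliton} and I would carry them out in that order, citing Lemma~\ref{lem:iteratedkernel} for the needed channel estimate on $u_L$ itself in $\dot H^1$.
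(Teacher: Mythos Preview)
Your overall shape --- reduce to odd-in-time, derive the dyadic fundamental-theorem-of-calculus identity, and match constants across scales --- is correct and coincides with the paper's proof.  The gap is in how you plan to pass from the identity to the $Z_{-2}$ bound.

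You propose to set $w=\mathcal A\big(\Pi^\perp_{L^2}\pa_t u_L(0)\big)$, argue that $w$ satisfies an elliptic equation $-\Delta_6 w+\widetilde V w=f+\pa_r g$, and then invoke Lemma~\ref{L:ellipticunsoliton}.  But no such elliptic equation arises.  In six dimensions the FTC identity \eqref{eq:ellipticinter1unsoliton} reads $(-\Delta+V)u_0=\text{(known)}$, i.e.\ the elliptic operator lands on the object to be estimated, and an inversion via Lemma~\ref{L:ellipticunsoliton} is genuinely needed.  In eight dimensions the roles are swapped: the FTC identity \eqref{8d:eq:ellipticinter1unsoliton} gives $\pa_t u(0)$ \emph{directly}, with the operator $-\Delta_8+V$ acting on the time-averaged $u_L$ term on the right, not on $\pa_t u(0)$.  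Applying $\mathcal A$ to both sides therefore does not produce $-\Delta_6 w$ on the left; it just gives $w$ explicitly.  The paper makes this point verbatim: after the constants are estimated one injects them directly into \eqref{8d:id:utdyadic}, and ``this last step is actually simpler since no elliptic equation is involved''.

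What replaces the elliptic step is the algebraic structure of the generalised kernel.  One decomposes $u_L(t)$ (not $\pa_t u(0)$) as in \eqref{8d:id:decompositionutY} along $T^\infty_0$, $T^\infty_1$, $T^0_1\chi^0$, and the relations $(-\Delta+V)T^\infty_1=-T^\infty_0$, $(-\Delta+V)T^0_1=-T^0_0$ from Lemma~\ref{8d:lem:generalisedkernel} turn the term $(-\Delta+V)\int_0^{R_k}(1-\tfrac{t}{R_k})u_L(t)\,dt$ into an explicit combination of $T^\infty_0$, $T^0_0$ plus $(-\Delta+V)$ applied to $\tilde u_k$.  This yields \eqref{8d:id:utdyadic}.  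After matching the constants $c^\infty_{0,k},c^\infty_{1,k},c^0_{0,k},c^0_{1,k}$ across dyadic scales exactly as in Step~1 of the proof of Lemma~\ref{L:estimateufromutunsoliton}, the $Z_{-2}$ bound on $\mathcal A\big(\Pi^\perp_{L^2}\pa_t u(0)\big)$ follows by applying $\mathcal A$ term-by-term to \eqref{8d:id:utdyadic} and estimating each piece directly.  Your worry about the commutator $[\mathcal A,V]$ is a red herring for this lemma: $\mathcal A$ is not used here to conjugate the evolution; that conjugation is the content of the \emph{next} lemma, Lemma~\ref{8d:lem:estimateufromutunsoliton2}.
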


\begin{proof}

The proof is similar to that of Lemma \ref{L:estimateufromutunsoliton}: it consists of a first step that we detail since slightly different, in which one expresses $\pa_t u(0)$ in terms of $u(t,x)$ for $|x|>|t|$ on a dyadic scale. For $k\in \mathbb Z$, applying the fundamental Theorem of calculus twice between $t=0$ and $t=R_k=2^k$, using \eqref{8dLW} and that $u_L$ is even in time, one obtains the identity:
\begin{equation}\label{8d:eq:ellipticinter1unsoliton}
\pa_t u(0)= \frac{1}{R_k} u_L(R_k)+(-\Delta +V)\left( \int_0^{R_k}(1-\frac{t}{R_k})u_L(t)dt\right).
\end{equation}
We let $\tilde u_k(t)=\Pi^\perp_{\dot H^1_{R_k}}u_L(t)$. We decompose for $|t|\leq R_k$:
\begin{equation}\label{8d:id:decompositionutY}
u_L(t)=\alpha^\infty_{0,k}(t)T^\infty_0+\alpha^\infty_{1,k}(t) T^\infty_1 +\alpha^0_{1,k}T^0_1 \chi^0 +\tilde u_k(t).
\end{equation}
with $\alpha_{1,k}^0=0$ by convention for all $k\geq 0$. Using Lemma \ref{8d:lem:generalisedkernel}, and that $\chi^0(r)=1$ for $r\leq 10$, we obtain that for all $r\geq R_k$ if $k\geq 0$, or for all $R_k\leq r \leq 10$ if $k\leq -1$:
$$
(-\Delta +V)\left( \int_0^{R_k}\Big(1-\frac{t}{R_k}\Big)u_L(t)dt\right)= \int_0^{R_k}\Big(1-\frac{t}{R_k}\Big) \Big(- \alpha^\infty_{1,k}(t)T^\infty_0-\alpha^0_{1,k}(t)T^0_0+ (-\Delta+V)\tilde u_k(t)\Big)dt.
$$
Injecting the two above identities in \eqref{8d:eq:ellipticinter1unsoliton}, recalling that $T_0^0=e^0_0T^\infty_1+\tilde T_0^0$ from Lemma \ref{8d:lem:generalisedkernel}, we obtain that there exist constants $c^\infty_{0,k}$, $c^\infty_{1,k}$, $c^0_{0,k}$, $c^0_{1,k}$ with $c^0_{0,k}=0$ and $c^0_{1,k}=0$ for $k\geq 0$ such that, for all $r\geq 0$ if $k\geq 0$, or for all $r \leq 10$ if $k\leq -1$:
\begin{align}
\label{8d:id:utdyadic}\pa_t u(0) &=c^\infty_{0,k}T^\infty_0+c^\infty_{1,k}T^\infty_1+c^0_{0,k} \tilde T^0_0 +c^0_{1,k}T^0_1 \\
\nonumber &\qquad + \frac{1}{R_k} \tilde u_k(R_k)+(-\Delta +V)\left( \int_0^{R_k}(1-\frac{t}{R_k})\tilde u_k(t)dt\right).
\end{align}
The rest of the proof is now very similar to that of Lemma \ref{L:estimateufromutunsoliton}: one first estimate the same way the constants $c^\infty_{1,k}$, $c^\infty_{0,k}$, $c^0_{0,k}$ and $c^0_{1,k}$, and then inject these estimates directly in \eqref{8d:id:utdyadic}, showing \eqref{bd:8dboundpatu0H-1}. This last step is actually simpler since no elliptic equation is involved.

\end{proof}

One then upgrades the averaged $L^2$ bound of Lemma \ref{8d:L:estimateufromutunsoliton} into a weighted $L^2$ bound.

\begin{lemma} \label{8d:lem:estimateufromutunsoliton2}

Assume that $u_L$ solves \eqref{8dLW}. Then:
\be \label{bd:patu8dfinal}
\| \Pi^\perp_{L^2}  \pa_t u(0)\|_{Z_{-4}}\lesssim \| \mathcal A (\Pi^\perp_{L^2} \pa_t u(0))\|_{Z_{-2}} +  \| u\|_{\tilde Y}.
\ee

\end{lemma}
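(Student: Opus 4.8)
The plan is to reduce the eight-dimensional estimate to a six-dimensional one of the same flavour as Lemma~\ref{lem:estimateufromutunsoliton2}, using the operator $\mathcal A$ of \eqref{8d:patu2:id:defA} to lower the ambient dimension, and then to rerun that lemma's proof. Set $q=\| \mathcal A(\Pi_{L^2}^\perp\pa_t u(0))\|_{Z_{-2}} + \| u_L\|_{\tilde Y}$. First I would reduce to $u_L$ odd in time, by replacing $u_L$ with $\tfrac12(u_L(t)-u_L(-t))$, which leaves $\pa_t u(0)$ unchanged and does not increase $\| u_L\|_{\tilde Y}$; then $u_L(0)=0$ and $\pa_t u_L$ is even in time. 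Since $\Pi_{L^2}^\perp\Lambda W=0$ and $t\Lambda W=tT^\infty_0$ is a non-radiative solution of \eqref{8dLW} with vanishing $\tilde Y$-norm and with $\mathcal A(\Pi_{L^2}^\perp\Lambda W)=0$, subtracting from $u_L$ the appropriate multiple of $t\Lambda W$ changes neither side of \eqref{bd:patu8dfinal} nor $q$, so I may assume $\pa_t u(0)=\Pi_{L^2}^\perp\pa_t u(0)$. Now put $G=\mathcal A u$. Since $\mathcal A\Delta_{\mathbb R^8}=\Delta_{\mathbb R^6}\mathcal A$ on radial functions, $G$ solves the six-dimensional wave equation $\pa_{tt}G-\Delta_{\mathbb R^6}G=-\mathcal A(Vu)$ with $(G(0),\pa_t G(0))=(0,\mathcal A\pa_t u(0))$; and because $\mathcal A$ shifts powers of $r$, one has, for radial $h$, the scaling identifications $\|h\|_{Z_{-2}(\mathbb R^8)}\approx\|h\|_{Z_{-2}(\mathbb R^6)}$ and $\|\mathcal A^{-1}h\|_{Z_{-4}(\mathbb R^8)}\approx\|\nabla h\|_{Z_{-3}(\mathbb R^6)}$. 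Thus \eqref{bd:patu8dfinal} becomes, up to constants, the estimate $\| \nabla(\pa_t G)(0)\|_{Z_{-3}(\mathbb R^6)}\lesssim \| (\pa_t G)(0)\|_{Z_{-2}(\mathbb R^6)}+\| u_L\|_{\tilde Y}$, which is exactly the ``upgrade from weighted $L^2$ to weighted $L^2$ of the gradient'' of Lemma~\ref{lem:estimateufromutunsoliton2}, now for a wave equation carrying the (small) forcing $-\mathcal A(Vu)$ rather than a genuine potential.

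To prove the reduced estimate I would mimic Steps~1--4 of the proof of Lemma~\ref{lem:estimateufromutunsoliton2}, working directly with the even-in-time function $\pa_t u_L$ in $\mathbb R^8$. Fix a dyadic scale $R_k=2^k$, with $t_k=R_k/2$ and cut-offs $\chi_k(r)=\chi(r/R_k)$, $\tilde\chi_k(t)=\tilde\chi(t/R_k)$, $\psi_k=\tilde\chi_k\chi_k$, and decompose $\pa_t u_L(t)=v_k(t)+(\text{truncated zeros of }-\Delta+V)$ on $\textup{supp}\,\chi_k$, using $T^\infty_0$ for $R_k\gtrsim 1$ and additionally $\tilde T^0_0$ (which by Lemma~\ref{8d:lem:generalisedkernel} is a genuine zero behaving like $r^2$ near the origin) for $R_k<1$, imposing $L^2_{\chi_k}$-orthogonality of $v_k$ to these zeros so that $v_k$ solves $\pa_{tt}v_k-\Delta v_k+Vv_k=-\pa_{tt}(\cdots)T^\infty_0-\pa_{tt}(\cdots)\tilde T^0_0$ on $\textup{supp}\,\chi_k$. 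The energy identity \eqref{id:L2toH1logenergyidentity} applied to $v_k$, the orthogonality killing $\iint\psi_k(\pa_{tt}v_k-\Delta v_k+Vv_k)v_k$, gives
$$
\iint (\pa_t v_k)^2\psi_k\,dtdx=\iint |\nabla v_k|^2\psi_k\,dtdx+\iint\Big(V+\tfrac12(\pa_{tt}-\Delta)\Big)\psi_k v_k^2\,dtdx .
$$
Here the first term on the right is bounded using $\| u_L\|_{\tilde Y}$ (which controls $\nabla u_L(t)$ in $L^2_{R_k}$ modulo $\textup{Span}(T^\infty_0,T^\infty_1,\chi^0T^0_1)$; after a Pythagoras argument reconciling this with the $L^2_{\chi_k}$-orthogonality of $v_k$, and using Hardy, one gets $\| \nabla v_k(t)\|_{L^2_{\chi_k}}\lesssim\langle\log R_k\rangle q$ and $\| v_k(t)\|_{L^2_{\chi_k}}\lesssim R_k\langle\log R_k\rangle q$), while the control of $\mathcal A(\pa_t u(0))$ in $Z_{-2}$ supplies the initial-time weighted $L^2$ bound that, propagated by the equation, controls the last term for all $|t|\le R_k$. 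One obtains $\iint(\pa_t v_k)^2\psi_k\lesssim R_k\langle\log R_k\rangle^2 q^2$, hence by the mean value theorem a time $|t_k|\le R_k/2$ with $\int_{3R_k\le|x|\le 9R_k}|\pa_t v_k(t_k)|^2\,dx\lesssim \langle\log R_k\rangle^2 q^2$. Exactly as in Step~4 of Lemma~\ref{lem:estimateufromutunsoliton2}, one then writes $u_L(t_k)$ and $\pa_t u_L(t_k)$ in the dyadic decomposition, uses finite speed of propagation to identify $\pa_t u_L(0)$ on $\{4R_k\le r\le 8R_k\}$ with $\pa_t\tilde w_k(0)$ where $\tilde w_k$ solves \eqref{8dLW} with an $\mathcal H$-extension of $(v_k,g_k)(t_k)$ off the annulus of norm $\lesssim\langle\log R_k\rangle q$, and applies standard energy estimates for \eqref{8dLW} together with Hardy to get $\|\pa_t u_L(0)\|_{L^2(4R_k\le r\le 8R_k)}\lesssim\langle\log R_k\rangle q$; summing over $k\in\mathbb Z$ and using \eqref{8d:id:asymptoticGamma} and Lemma~\ref{8d:lem:generalisedkernel} to dispose of the explicitly bounded kernel contributions yields \eqref{bd:patu8dfinal}.

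I expect the main obstacle, as in Section~\ref{subsec:channelsunsoliton}, to be the uniform control, over all dyadic scales $k\in\mathbb Z$, of the amplitudes of the (generalised-)kernel directions $T^\infty_0,T^\infty_1,T^0_1$ attached to $u_L(t)$ and $\pa_t u_L(t)$ at scale $R_k$ --- the analogue of the telescoping bounds on the constants $c_k,\tilde c_k$ in the proof of Lemma~\ref{L:estimateufromutunsoliton} --- so that they only cost the admissible $\langle\log R_k\rangle$ loss, together with the treatment of the forcing term $\mathcal A(Vu)(t,r)=\int_r^\infty\rho V(\rho)u(t,\rho)\,d\rho$ in the six-dimensional equation for $G$. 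The latter must be absorbed perturbatively on exterior regions $\{r>\rho_0+|t|\}$ with $\rho_0$ large, using the fast decay $V=O(r^{-4})$ and the $\tilde Y$-control of $u_L$ (and Hardy), exactly as the quantity $\| Vu_L\|_{L^1L^2}$ is handled in the proofs of Lemmas~\ref{L:rigiditylinearoddintime} and~\ref{lem:iteratedkernel}; this is the source of the logarithmic, rather than genuine $L^2$, weakening. There remains the standard but delicate bookkeeping of switching, at $R_k=1$, between the zero $T^\infty_1$ relevant at large scales and $\tilde T^0_0$ (and the generalised zero $T^0_1$) relevant at small scales.
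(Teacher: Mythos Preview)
Your first paragraph is on target: the dimensional reduction via $\mathcal A$ is exactly what the paper does, and your identification $\|\pa_t u(0)\|_{Z_{-4}(\RR^8)}\approx\|\nabla v(0)\|_{Z_{-3}(\RR^6)}$ with $v=\mathcal A\pa_t u=\pa_t G$ is correct. The gap appears in the second paragraph, where you abandon $v$ and instead run the energy argument on $\pa_t u_L$ in $\RR^8$.

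Concretely, you decompose $\pa_t u_L(t)=v_k(t)+(\text{kernel terms})$ in $L^2_{\chi_k}(\RR^8)$ and then claim $\|\nabla v_k(t)\|_{L^2_{\chi_k}}\lesssim\langle\log R_k\rangle q$ from $\|u_L\|_{\tilde Y}$. But $\|u_L\|_{\tilde Y}$ controls $\nabla u_L(t)$ in $L^2_{R_k}$ modulo kernel, not $\nabla\pa_t u_L(t)$; since your $v_k$ is a piece of $\pa_t u_L$, the quantity $\nabla v_k$ involves second derivatives of $u_L$ for which you have no bound. The same obstruction propagates to Step~4: the output of your energy identity is a bound on $\pa_t v_k\subset\pa_t^2u_L$, not on $\pa_t u_L(0)$, and the data you feed into the finite-speed-of-propagation argument, ``$(v_k,g_k)(t_k)$'', does not match $(u_L,\pa_t u_L)(t_k)$.

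What the paper does---and what is missing from your execution---is to run the six-dimensional energy argument on $v=\mathcal A\pa_t u$ itself. The decisive algebraic point is the identity
\[
\pa_t v \;=\; \mathcal A\pa_t^2 u \;=\; -\mathcal A(-\Delta_8+V)u \;=\; -r\pa_r u - 6u - \mathcal A V u,
\]
which converts a time derivative of $v$ into a space derivative of $u$. Plugging in the decomposition $u(t)=\alpha^\infty_{0,k}T^\infty_0+\alpha^\infty_{1,k}T^\infty_1+\alpha^0_{1,k}T^0_1\chi^0+\tilde u_k(t)$ (with $\tilde u_k=\Pi^\perp_{\dot H^1_{R_k}}u$), one gets $\pa_t v=\alpha^\infty_{1,k}\mathcal A T^\infty_0+\alpha^0_{1,k}\mathcal A\bar T^0_0+(\text{remainder in }\tilde u_k)$, so that $\pa_t v$, modulo elements of the kernel of $-\Delta_6+\mathcal A V\mathcal A^{-1}$, is bounded in $L^2(\RR^6)$ on the dyadic annulus by $\|\tilde u_k\|_{\dot H^1_{R_k}(\RR^8)}\le\|u_L\|_{\tilde Y}$. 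This is precisely the analogue of the $Y_R$-control in the six-dimensional Lemma~\ref{lem:estimateufromutunsoliton2}, and from here the proof is verbatim the same. Your treatment of the potential as an external forcing $-\mathcal A(Vu)$ on a free six-dimensional wave is harmless, but it obscures this identity and leads you to try to bound the wrong gradient.
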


\begin{proof}

The proof is very similar to that of Lemma \ref{lem:estimateufromutunsoliton2}, because one can perform a dimensional reduction leading to a wave equation in $6$ dimensions. Indeed, without loss of generality, assume $\pa_t u(0)=\Pi_{L^2}^\perp \pa_t u_0$, and then consider the new unknown:
$$
 v=\mathcal A \pa_t u.
$$
From the conjugation relation $\Delta_N=\mathcal A^{-1} \Delta_{N-2}\mathcal A$, where $\Delta_N=\pa_{rr}+\frac{N-1}{r}\pa_r$, $v$ is an even in time solution on $\mathbb R^{1+6}$ of
$$
\left\{ \begin{array}{l l} \pa_t^2v-\Delta_6 v+\mathcal A V \mathcal A^{-1} v=0, \\ \vec v(0)=(\mathcal A\pa_t u(0),0 ).  \end{array} \right.
$$
The kernel of the above integro-differential operator is:
\be \label{8d:patu2:id:kernelconjugatedop}
(-\Delta_6 +\mathcal A V \mathcal A^{-1})f=0, \qquad \mbox{for }f\in \textup{Span} \left(\mathcal AT^\infty_0 \ , \ \int_1^r \rho \Gamma (\rho)d\rho \ , \ 1 \right) .
\ee

First, one notices that the initial data is estimated in $Z_{-3}$ by the right-hand of \eqref{bd:patu8dfinal}:
\be \label{8d:patu2:bd:v0}
\| v(0)\|_{Z_{-2}} = \| \mathcal A \pa_t u(0)\|_{Z_{-2}}.
\ee
Second, one remarks that $\pa_tv$, outside the kernel, can be estimated by $u$. Indeed,
\be \label{8d:patu2:id:vt1}
\pa_t v=-\mathcal A (-\Delta_8+V)u(t)=-r\pa_r u(t)-6u(t)-\mathcal A Vu(t),
\ee
We rewrite \eqref{8d:id:decompositionutY} in a unified form for all $k\in \mathbb Z$:
\be \label{8d:patu2:id:decompositionu}
u(t)=  \alpha^\infty_{0,k}(t)T^\infty_0+\alpha^\infty_{1,k}(t) T^\infty_1 +\alpha^0_{1,k}(t)T^0_1\chi^0+\tilde u_k(t),
\ee
where by convention $\alpha^0_{1,k}(t)=0$ for $k\geq 0$. We introduce $\bar T_0^0=(-\Delta_8+V)(T_1^0\chi^0)$, and thus, from \eqref{8d:patu2:id:vt1} and Lemma \ref{8d:lem:generalisedkernel}:
\be \label{8d:patu2:id:decompositionvt}
\pa_t v=  \alpha^\infty_{1,k} \mathcal A T^\infty_0+\alpha^0_{1,k} \mathcal A \bar T^0_0 -\mathcal A (-\Delta_8+V)\tilde u_k(t),
\ee
One can then apply the exact same gain of regularity technique for solutions to the wave equation in $6$ dimensions as in the proof of Lemma \ref{lem:estimateufromutunsoliton2}.

\end{proof}

\begin{appendix}

\section{A counter-example to a linear channel of energy estimate}

\label{ap:counterexample}

We make precise here the statement of Remark \ref{re:channelsunsoliton}. This counter-example has similarities with that of \cite{CoKeSc14}.

\begin{lemma}

There exists a sequence of initial data $(u_{n,0},0)$ with $u_{n,0}\in \dot H^1(\mathbb R^6)$ and:
$$
\sup_{R>0} \| r^{-1} u_{n,0} \|_{L^2(\{R\leq r \leq 2R\})} \to \infty \qquad \mbox{as } n\to \infty
$$
such that the corresponding sequence of solutions $u_{F,n}$ to \eqref{FW} satisfies:
$$
\lim_{t\to  \infty}\int_{|x|\geq t} |\nabla_{t,x} u_{F,n}|^2dx \leq 1, \qquad \forall  n.
$$

\end{lemma}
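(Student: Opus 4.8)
The plan is to adapt the construction of \cite{CoKeSc14}. The input is the explicit expression for the exterior energy of radial free waves with data of the form $(u_0,0)$ in $\RR^6$, namely the quantity $E_{\textup{out}}(u_0,0)=2\lim_{t\to\infty}\int_{|x|>t}|\nabla_{t,x}u_F|^2\,dx$ appearing in \eqref{free_exterior} and \eqref{bad_estimate''}, as computed in \cite{CoKeSc14,LiShenWei21P}. What this expression makes visible is that the obstruction to controlling $\big\|u_0\big\|_{\widetilde{Z}_{-2}}$ by $\sqrt{E_{\textup{out}}}$ — equivalently, through the Hardy‑type inequality $\|f\|_{\widetilde{Z}_{-2}}\lesssim\|\nabla f\|_{\widetilde{Z}_{-3}}$ noted in the paper, the obstruction to \eqref{bad_estimate''} — is genuinely logarithmic: at each dyadic scale the available control degrades by a fixed amount, and the idea is to make these defects pile up coherently at one extreme scale while each scale remains individually harmless for $E_{\textup{out}}$.

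Concretely, I would take $u_{n,0}(x)=\sum_{k=0}^{n}c_k\,\phi(2^k x)$, a finite superposition of dilates of a single fixed smooth compactly supported radial profile $\phi$, with the dilation factors $2^k$ and the coefficients $c_k$ chosen so that: (i) $u_{n,0}$ is supported in $\{2^{-n}\lesssim|x|\lesssim 1\}$ and there resembles, up to the two unavoidable cut‑offs near $|x|\sim 1$ and near $|x|\sim 2^{-n}$, the marginally decaying profile $|x|^{-2}$ times a slowly varying weight; (ii) the two endpoint truncations contribute only $O(1)$ to $E_{\textup{out}}$ — for $|x|^{-2}$, whose amplitude at scale $\lambda$ is $\sim\lambda^{-2}$, a cut‑off at scale $\lambda$ costs only $O(1)$ against the scaling weight $\lambda^{4}$; (iii) the explicit exterior‑energy functional evaluated on $u_{n,0}$ is a double sum over scales whose off‑diagonal terms between neighbouring $k$'s cancel the diagonal ones down to a bound uniform in $n$, which is precisely what pins down the signs/weights $c_k$; and (iv) $u_{n,0}\in\dot H^1(\RR^6)$ while at the innermost scale $R\sim 2^{-n}$ the density $R^{-1}\|u_{n,0}\|_{L^2(R\le|x|\le 2R)}$ is $\gtrsim\langle\log R\rangle\sim n$, so that $\sup_{R>0}R^{-1}\|u_{n,0}\|_{L^2(R\le|x|\le 2R)}\to\infty$. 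Finally one rescales each $u_{n,0}$ by a fixed constant so that $E_{\textup{out}}\le1$ and records the two displayed properties.

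The main obstacle is (iii): showing that $E_{\textup{out}}(u_{n,0},0)$ stays bounded as $n\to\infty$. Any crude estimate of $E_{\textup{out}}$ — say by $\|\nabla u_{n,0}\|_{L^2}^2$, or even by $\|u_{n,0}\|_{Z_{-2}}^2$ — is useless, since both of these grow with $n$; what is needed is the precise degeneracy of the even‑dimensional exterior‑energy form along the marginal profile $|x|^{-2}$, together with the sign structure of its bilinear part across well‑separated scales. A convenient way to expose this structure is to pass, via the intertwining $\Delta_N=\mathcal A^{-1}\Delta_{N-2}\mathcal A$ used in Section~\ref{sec:8d} (or directly through the Hankel transform), to a representation of $E_{\textup{out}}$ as the squared $L^2$‑norm of an explicit elementary transform of $u_{n,0}$, and to verify the cancellation at that level. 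Choosing the coefficients $c_k$ all equal would instead make $E_{\textup{out}}(u_{n,0},0)$ grow like a power of $n$, which confirms that realising this cancellation is exactly the heart of the matter.
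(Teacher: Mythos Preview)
Your target is correct and matches the paper: initial data of the form $-\tfrac{1}{2r^2}\sum_k c_k\chi_k(r)$, i.e.\ $|x|^{-2}$ times a slowly varying dyadic weight, with the weight unbounded while $E_{\textup{out}}$ stays bounded. But the mechanism you describe in (iii) is not the right one, and your last sentence is false: with all $c_k$ equal on $\llbracket 0,n\rrbracket$ and zero outside, $E_{\textup{out}}$ is $O(1)$ (only the two endpoint cutoffs contribute), yet $\sup_R R^{-1}\|u_{n,0}\|_{L^2(R\le r\le 2R)}$ is also $O(1)$ and does not blow up. The weight must \emph{grow}, and the issue is then to keep $E_{\textup{out}}$ bounded for a growing weight --- not to cancel diagonal against off-diagonal terms by tuning signs.

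The paper bypasses the Hankel/exterior-energy formula altogether. The key observation is that $r^{-2}p_1(t/r)$, with $p_1(\sigma)=\sigma^2-\tfrac12$, is an \emph{exact} solution of $(\partial_t^2-\Delta)u=0$ on $\RR^{1+6}$. Setting $v[c_k](t,r)=\sum_k c_k\chi_k(r)\,r^{-2}p_1(t/r)$ for a dyadic partition of unity $\{\chi_k\}$, the wave operator annihilates the profile and hits only the cutoffs, so the source \emph{telescopes}:
\[
(\partial_t^2-\Delta)v=\sum_k (c_{k+1}-c_k)\,f_{\{R_k\}},
\]
a fixed compactly supported $f$ rescaled to $R_k=2^k$. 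Letting $\tilde v$ solve $(\partial_t^2-\Delta)\tilde v=-(\partial_t^2-\Delta)v$ with zero data, $u_F=v+\tilde v$ is a free wave with data $(v(0),0)$; since $v$ is compactly supported in $r$ (for finitely supported $(c_k)$) it carries no exterior energy, and a radiation-profile argument on $\tilde v$ gives
\[
E_{\textup{out}}\lesssim \sum_k |c_{k+1}-c_k|^2,
\]
using that the correlations $\langle G_{+,R_0},G_{+,R_k}\rangle$ of the rescaled profiles lie in $\ell^1(\ZZ)$. No sign choice is needed: any sequence with $\max_k|c_k|\to\infty$ and $\sum_k|c_{k+1}-c_k|^2\lesssim 1$ works, e.g.\ $c_{n,k}=n^{1/2-\delta}-|k-n|^{1/2-\delta}$ on $\llbracket 0,2n\rrbracket$. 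This telescoping via an exact time-dependent solution is the missing idea in your sketch.
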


\begin{proof}

\textbf{Step 1.} \emph{Proof assuming a technical claim}. Let $\chi$ be a smooth cut-off with $\chi(r)=0$ for $r\geq 3$ and $r\leq 1$ that generates a partition of unity:
$$
\sum_{k\in \mathbb Z} \chi_k(r)=1, \qquad \chi_k(r)=\chi(r/R_k), \qquad R_k=2^k.
$$
We introduce for $(c_k)_{k}\in \ell^2(\mathbb Z)$:
$$
v[c_k](t,r)=\sum_k c_k \chi_k \frac{1}{r^2}p_1(\frac tr), \qquad p_1(\sigma)=\sigma^2-\frac 12.
$$
and $\tilde v[c_k]$ the solution to
$$
(\pa_t^2-\Delta)\tilde v =-(\pa_t^2-\Delta) v, \qquad (\tilde v(0),\pa_t \tilde v(0))=(0,0).
$$
We claim that for some $C>0$ independent of $c_k$:
\be \label{counterexample:technicalclaim}
\lim_{t\to \infty} \int_{r\geq t} |\nabla_{t,x} \tilde v(t)|^2dx \leq C \sum_{k} |c_k-c_{k+1}|^2.
\ee
Assuming the claim, the Lemma is proved by choosing $u_{F,n}=v[c_{n,k}]+\tilde v[c_{n,k}]$ with $(c_{n,k})_k\in \ell^2(\ZZ)$ such that $\max_{k\in \mathbb Z} |c_{n,k}|\to \infty$ as $n\to \infty$ while $\sum_{k\in \mathbb Z} |c_{n,k}-c_{n,k+1}|^2\lesssim 1$ uniformly for all $n$. Such an example is given by, for any $0<\delta<1/2$:
$$
c_{n,k}=\left\{\begin{array}{l l} n^{\frac 12 -\delta}-|k-n|^{\frac 12-\delta} \qquad \mbox{for }0\leq k\leq 2n, \\ 0 \qquad \mbox{for } |k-n|\geq n+1.\end{array} \right.
$$

\noindent \textbf{Step 2.} \emph{Proof of the technical claim \eqref{counterexample:technicalclaim}}. We have from the partition of unity and support properties of $\chi$ that $\chi_k(r)=1$ for $\frac 32R_k\leq r \leq 2R_k$ and $\chi_k(r)+\chi_{k+1}(r)=1$ for $2R_k\leq r\leq 3R_k$. Hence $\Delta \chi_k(r)=-\Delta \chi_{k+1}(r)$ and $\pa_r \chi_k=-\pa_r \chi_{k+1}(r)$ for $2R_k\leq r\leq 3R_k$. Using this and $(\pa_t^2-\Delta)(r^{-2}p_1(r/t))=0$ we obtain:
\begin{align*}
& (\pa_t^2-\Delta) v= \sum_k (c_{k+1}-c_k)f_{\{ R_k\}},\\
& f_{\{ R_k\}}(t,r)=\frac{1}{R_k^4}f\left(\frac{t}{R_k},\frac{r}{R_k}\right),\\
& f(t,r)=\left(\Delta \chi \frac{1}{r^2}p_1(\frac tr)+2\pa_r \chi \pa_r \left( \frac{1}{r^2}p_1(\frac tr)\right)\right){\bf 1}(2 \leq r \leq 3).
\end{align*}
Therefore,
\begin{align*}
& \tilde v=-\sum_k (c_{k+1}-c_k)w_{(R_k)}, \\
& w_{(R_k)}(t,r)=\frac{1}{R_k^2}w\left(\frac{t}{R_k},\frac{r}{R_k}\right),\\
& (\pa_t^2-\Delta) w= f, \qquad \mbox{with} \quad (w(0),\pa_t w(0))=(0,0).
\end{align*}
We recall that for any solution to $\pa_t^2 u-\Delta u=F$, with $\vec u(0)\in \mathcal H$, and $F\in L^1L^2(r> |t|)$, there exists a radiation profile $G_+\in L^2([0,\infty))$ satisfying
\begin{equation}\label{gainreg:id:radiation6d}
\lim_{t\rightarrow \infty} \int_{t}^\infty \left|r^{\frac 52}\pa_t u(t,r)-G_+(r-|t|)\right|^2dr =\lim_{t\rightarrow \infty} \int_{t}^\infty \left|r^{\frac 52}\pa_r u(t,r)+ G_+(r-|t|)\right|^2dr =0,
\end{equation}
see \cite{DuKeMe20}. Let $G_+[w]$ be the radiation profile of $w$. Since $\pa_t f\in L^1L^2(r\geq |t|)$, then $G_+[w]\in H^1([0,\infty))$, see again \cite{DuKeMe20}. Due to support considerations and finite speed of propagation, it has compact support on the right: $G_+[w](\rho)=0$ for all $\rho\geq \rho_0$, for some $\rho_0>0$. Moreover the radiation profile of $w_k$ is:
$$
G_+[w_k]=G_{+,R_k}, \qquad G_{+,R_k}(\rho)=\frac{1}{\sqrt{R_k}}G_+[w](\frac{\rho}{R_k}).
$$
Hence the radiation emitted by $\tilde v$ as $t\to \infty$ is:
$$
G_+[\tilde v]=\sum_{k\in \mathbb Z} (c_{k+1}-c_k)G_{+,R_k}.
$$
Applying \eqref{gainreg:id:radiation6d} to $\tilde v$, introducing $d_k=c_{k+1}-c_k$ and using a scaling argument:
\begin{align*}
& \lim_{t\to \infty} \int_{r\geq |t|}|\nabla_{t,x}  \tilde v|^2dx \  = \ 2 \|  G_+[\tilde v]\|_{L^2([0,\infty))}^2\\
&\qquad \qquad =\sum_{k,l} d_kd_l \langle G_{+,R_k}, G_{+,R_{l}}\rangle_{L^2([0,\infty))}=\sum_{k,l} d_kd_l \langle G_{+,R_0}, G_{+,R_{l-k}}\rangle_{L^2([0,\infty))}.
\end{align*}
Let $e_k= \langle G_{+,R_0}, G_{+,R_{k}}\rangle_{L^2([0,\infty))}$. Then, since $G_+[w]$ has compact support and is continuous,
$$
e_k\sim R_k^{\frac 12}G_+[w](0)\int_0^\infty G_+[w] d\rho \quad \mbox{as }k\to -\infty, \qquad e_k\sim R_k^{-\frac 12}G_+[w](0)\int_0^\infty G_+[w] d\rho \quad \mbox{as }k\to \infty.
$$
Hence $e_k\in \ell^1$, and, by Young inequality for convolution followed by Cauchy-Schwarz:
$$
\lim_{t\to \infty} \int_{r\geq |t|}|\nabla_{t,x}  \tilde v|^2dx = \sum_{k} d_k (d_{\cdot}*e_{-\cdot})_k\lesssim \sum_k |d_k|^2.
$$
This proves the claim \eqref{counterexample:technicalclaim}.

\end{proof}

\section{A few estimates}
\label{A:estimates}
\begin{lemma}
If $R<R'<\lambda$, 
\begin{equation}
 \label{estim5}
 \Big\|W_{(\lambda)} \indic_{\{R+|t|<|x|<R'+|t|\}}\Big\|_{L^2L^4}\lesssim \left(  \frac{R'-R}{\lambda}\right)^{1/4}.
\end{equation}
If $R\geq 1$:
\begin{equation} \label{estim6}
\| W  \indic_{\{\max(|t|,R)<|x|\}} \|_{L^2L^4}\lesssim R^{-2}.
\end{equation}

\end{lemma}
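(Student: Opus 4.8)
The plan is to prove both bounds by elementary pointwise estimates on $W$ followed by direct integration. Since $N=6$ we have $W(r)=(1+r^2/24)^{-2}\approx \langle r\rangle^{-4}$, hence, writing $W_{(\lambda)}(r)=\lambda^{-2}W(r/\lambda)$,
\[
W_{(\lambda)}(r)\approx \frac{\lambda^2}{(\lambda^2+r^2)^2},\qquad\text{so that}\qquad W_{(\lambda)}(r)\lesssim \min\!\Big(\frac{1}{\lambda^2},\,\frac{\lambda^2}{r^4}\Big).
\]
Recall also that by definition $\|u\|_{L^2L^4(E)}^2=\int_{\RR}\big(\int_{\{r:\,(t,r)\in E\}}|u(t,r)|^4r^5\,dr\big)^{1/2}dt$.

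For \eqref{estim5} I would split the time integral according to whether $|t|\le\lambda$ or $|t|>\lambda$. In the first regime, on $\{R+|t|<r<R'+|t|\}$ one has $r<R'+|t|<2\lambda$ (using $R'<\lambda$), so $W_{(\lambda)}(r)\lesssim\lambda^{-2}$ and
\[
\int_{R+|t|}^{R'+|t|}W_{(\lambda)}(r)^4r^5\,dr\lesssim \lambda^{-8}(R'+|t|)^5(R'-R)\lesssim \lambda^{-3}(R'-R);
\]
integrating the square root over $|t|\le\lambda$ contributes $\lesssim \lambda\cdot\lambda^{-3/2}(R'-R)^{1/2}=\lambda^{-1/2}(R'-R)^{1/2}$ to $\|\cdot\|_{L^2L^4}^2$. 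In the second regime, on $\{R+|t|<r<R'+|t|\}$ one has $r>|t|>\lambda$, so $W_{(\lambda)}(r)\lesssim\lambda^2 r^{-4}$ and
\[
\int_{R+|t|}^{R'+|t|}W_{(\lambda)}(r)^4r^5\,dr\lesssim \lambda^8\int_{R+|t|}^{R'+|t|}r^{-11}\,dr\lesssim \lambda^8(R'-R)\,|t|^{-11};
\]
integrating the square root over $|t|>\lambda$ contributes $\lesssim \lambda^4(R'-R)^{1/2}\lambda^{-9/2}=\lambda^{-1/2}(R'-R)^{1/2}$. Adding the two contributions and taking a square root gives $\|W_{(\lambda)}\indic_{\{R+|t|<|x|<R'+|t|\}}\|_{L^2L^4}\lesssim \lambda^{-1/4}(R'-R)^{1/4}$, which is \eqref{estim5}.

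For \eqref{estim6}, since $R\ge1$, on $\{r>\max(|t|,R)\}$ one has $r\ge1$, hence $W(r)\approx r^{-4}$ and
\[
\int_{\max(|t|,R)}^{\infty}W(r)^4r^5\,dr\approx \int_{\max(|t|,R)}^{\infty}r^{-11}\,dr\approx \max(|t|,R)^{-10}.
\]
Therefore $\|W\,\indic_{\{r>\max(|t|,R)\}}(t)\|_{L^4_r}^2\approx \max(|t|,R)^{-5}$, and integrating in $t$,
\[
\int_{\RR}\max(|t|,R)^{-5}\,dt\lesssim \int_{|t|\le R}R^{-5}\,dt+\int_{|t|>R}|t|^{-5}\,dt\lesssim R^{-4},
\]
so that $\|W\,\indic_{\{\max(|t|,R)<|x|\}}\|_{L^2L^4}\lesssim R^{-2}$, which is \eqref{estim6}. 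There is essentially no obstacle here; the only point requiring care is in \eqref{estim5}, namely to use the flat bound $W_{(\lambda)}\lesssim\lambda^{-2}$ in the regime $|t|\le\lambda$ (where $R'<\lambda$ forces $r\lesssim\lambda$ on the support) and the decaying bound $W_{(\lambda)}\lesssim\lambda^2 r^{-4}$ in the regime $|t|>\lambda$ (where $r>\lambda$ automatically), so that the two regimes produce the same power $\lambda^{-1/2}(R'-R)^{1/2}$.
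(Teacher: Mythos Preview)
Your proof is correct and follows essentially the same approach as the paper's: both split the time integral at the soliton scale and use the flat bound $W_{(\lambda)}\lesssim\lambda^{-2}$ for small times and the decaying bound $W_{(\lambda)}\lesssim\lambda^2 r^{-4}$ for large times, and the treatment of \eqref{estim6} is identical. The only cosmetic difference is that the paper first rescales to $\lambda=1$ and then splits at $|t|=1$, whereas you keep $\lambda$ explicit throughout.
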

\begin{proof}
 The proof is by direct computations, using that $W$ and $\Lambda W$ are bounded and of order $1/|x|^4$ at infinity. We sketch the proof of \eqref{estim5}. By scaling, we can assume $\lambda=1$. Then
\begin{multline*}
\left\|W \indic_{\{R+|t|<|x|<R'+|t|\}}\right\|_{L^2L^4}\lesssim \left\|\indic_{\{|t|<1\}} \indic_{\{R+|t|<|x|<R'+|t|\}}\right\|_{L^2L^4}\\
+\left\|\frac{1}{|x|^4}\indic_{\{|t|>1\}} \indic_{\{R+|t|<|x|<R'+|t|\}}\right\|_{L^2L^4}\lesssim \left( \frac{R'-R}{\lambda} \right)^{1/4}.
\end{multline*}
To prove \eqref{estim6}, we decompose:
\begin{multline*}
\| W  \indic_{\{\max(|t|,R)<|x|\}} \|_{L^2L^4}^2 =\int_{|t|\leq R}\| W \indic_{\{|x|\geq R\}} \|_{L^4}^2 dt+\int_{|t|\geq R}\| W \indic_{\{|x|\geq |t|\}} \|_{L^4}^2 dt\\
\lesssim \int_{|t|\leq R}R^{-5}dt+\int_{|t|\geq R}|t|^{-5} dt \lesssim R^{-4}.
\end{multline*}
\end{proof}

\end{appendix}

\bibliographystyle{alpha} 
\bibliography{channels-bibliography}
\end{document}